\newtheorem{theorem}{Theorem}
\theoremstyle{plain}
\newtheorem{corollary}{Corollary}
\newtheorem{definition}{Definition}
\newtheorem{example}{Example}
\newtheorem{lemma}{Lemma}
\newtheorem{proposition}{Proposition}
\newtheorem{remark}{Remark}
\numberwithin{equation}{section}
\begin{document}
\title{ALMOST $\alpha $-PARACOSYMPLECTIC MANIFOLDS}
\author{I. K\"{u}peli Erken}
\address{Art and Science Faculty, Department of Mathematics, Uludag
University, 16059 Bursa, TURKEY}
\email{iremkupeli@uludag.edu.tr}
\author{ P. Dacko}
\email{piotrdacko@yahoo.com}
\author{C. Murathan }
\address{Art and Science Faculty, Department of Mathematics, Uludag
University, 16059 Bursa, TURKEY}
\email{cengiz@uludag.edu.tr}
\subjclass[2001]{Primary 53B30, 53C15, 53C25; Secondary 53D10}
\keywords{Almost paracontact metric manifold, almost paracosymplectic
manifold, almost para-Kenmotsu manifold, para-Kaehler manifold.}

\begin{abstract}
This paper is a complete study of almost $\alpha $-paracosmplectic
manifolds. We characterize almost $\alpha $-paracosmplectic manifolds which
have para Kaehler leaves. Main curvature identities which are fulfilled by
any almost $\alpha $-paracosmplectic manifold \ are found. We also proved
that $\xi $ is a harmonic vector field if and only if it is an eigen vector
field of the Ricci operator. We locally classify three dimensional almost $%
\alpha $-para-Kenmotsu manifolds satisfying a certain nullity condition. We
show that this condition is invariant under $D_{\gamma ,\beta }$-homothetic
deformation. Furthermore, we construct examples of almost $\alpha $%
-paracosmplectic manifolds satisfying generalized nullity conditions.
\end{abstract}

\maketitle


\section{I\textbf{ntroduction}}

The study of almost paracontact geometry was introduced by Kaneyuki and
Williams in \cite{kaneyuki1} and then it was continued by many other
authors. A systematic study of almost paracontact metric manifolds was
carried out in paper of Zamkovoy \cite{Za}. However such structures were
studied before \cite{RoscVanh}, \cite{BuchRosc}, \cite{BuchRosc2}. Note also 
\cite{Bejan}. These authors called such structures almost para-coHermitian.
The curvature identities for different classes of almost paracontact metric
manifolds were obtained e.g. in \cite{DACKO}, \cite{Welyczko}, \cite{Za}.

Considering the recent stage of the theory development there is an
impression that the geometers are focused on problems in almost paracontact
metric geometry which are seem to be created ad hoc, but in fact the source
for them lies in the Riemannian geometry of almost contact metric
structures. The basic reference for almost contact metric manifolds is a D.
E. Blair monograph \cite{Blair}. Recently appeared long awaited a survey
article \cite{CapMonNicYud} concernig almost cosymplectic manifolds as the
Blair's monograph deals mostly with contact metric manifolds.

Both almost contact metric and almost paracontact metric manifolds have
common roots in something we may call pre-cosymplectic structure which
simply is a pair of a 1-form usually denoted by $\eta $ and 2-form $\Phi $,
so $\eta \wedge \Phi ^{n}$ is a volume element. The characteristic (Reeb)
vector field $\xi $ is then defined by $i_{\xi }\eta =1$, $i_{\xi }\Phi =0$.
The Riemannian or pseudo-Riemannian geometry in this framework appears when
one is trying to introduce \emph{a compatible} structure which means a
metric or pseudo-metric $g$ and and affinor $\phi $ ((1,1)-tensor field),
such that $\Phi (X,Y)=g(X,\phi Y)$, and $\phi ^{2}=\epsilon (Id-\eta \otimes
\xi )$, where for $\epsilon =-1$ we have almost contact metric structure and
for $\epsilon =+1$ almost paracontact metric structure. The triple $(\phi
,\xi ,\eta )$ is then called almost contact structure or almost paracontact
structure, resp. For example: when $\eta $ is a contact form $d\eta =\Phi $
manifolds are called contact metric or paracontact metric, for both $\eta $, 
$\Phi $ closed, P. Libermann called such pair a cosymplectic structure, we
have almost cosymplectic manifolds or almost paracosymplectic manifolds.

The other possible point of view is to take an almost contact or almost
paracontact structures as a starting point and next to seek a compatible
metric or pseudo-metric.

Combining the assumption concernig the forms $\eta $, $\Phi $ and the
affinor $\phi $ we obtain several disjoint (rough) classes of manifolds.
Additionally within each of these classes are posed some assumptions
concernig the metric or pseudo-metric. Even if almost paracontact metric
manifolds were studied in the past it is recently when geometers discovered
many similarities between Riemannian and pseudo-Riemannian geometry of
almost contact metric and almost paracontact metric manifolds. Up to the
level when we can simply transliterate some properties.

Also this paper deals with the concept well-known in almost contact metric
geometry: manifolds with Reeb field belonging the the $\kappa $-nullity
ditribution and more general $(\kappa ,\mu $)-nullity or even $(\kappa ,\mu
,\nu )$-nullity distributions, here $\kappa $, $\mu $, $\nu $ are constants
or particular functions. Classifications are obtained for non-Sasakian
contact metric manifold, almost cosymplectic, almost $\alpha $-Kenmotsu and
almost $\alpha $-cosymplectic, \cite{Boeckx}, \cite{CarMol}, \cite{DacOl1}, 
\cite{DilPast2}, \cite{HAKAN}, \cite{PastSal}, \cite{Sal}.

The similar problems are now posed and studied for an almost paracontact
metric manifolds. However the situation is more difficult according to the
fact that occurs \textquotedblleft exceptional\textquotedblright\ manifolds,
that means manifolds without counterparts in the Riemannian case e.g. \cite%
{Dacko15}, \cite{MaMol}.

These \textquotedblleft exceptions\textquotedblright\ are often contraditcs
our intuitions. Also when thinking about tight relation between topology of
a manifold and its Riemannian geometry, particularly for closed manifolds,
from other hand pseudo-Riemannian metric are rather loosely related to the
manifold's topology we see that some problems can not be simply brought from
the almost contact metric geometry to almost paracontact.

Summarizing the contents of this paper, after the Preliminaries, where we
recall the definition of almost paracontact metric manifold, we introduce a
class of manifolds which contains both almost paracosymplectic and almost
para-Kenmotsu as well and we call these manifolds as almost $\alpha $%
-paracosymplectic, where $\alpha $ is a arbitrary function. However we prove
later on that in fact if dimension of the manifold is $\geqslant 5$, then
the 1-forms $d\alpha $ and $\eta $ are proportional.

There are basic objects for arbitrary almost paracontact metric manifold:
tensor fields $\mathcal{A}=-\nabla \xi $ and $h=\frac{1}{2}\mathcal{L}_{\xi
}\phi $. We study basic relations between them for the case of almost $%
\alpha $-paracosymplectic manifold. It is also established that $\xi $ is
geodesic and $\phi $ is $\xi $-parallel, $\nabla _{\xi }\phi =0$.

In the short auxiliary section we recall the concept of para-Kaeheler
manifolds we need to define a class of almost $\alpha$-paracosymplectic
manifolds with para-Kaehler leaves.

In the Sect. \textbf{5.} we characterize manifolds with para-Kaehler leaves:
an almost $\alpha $-paracosymplectic manifold has para-Kaehler leaves if and
only if 
\begin{equation*}
(\nabla _{X}\phi )Y=\alpha g(\phi X,Y)\xi +g(hX,Y)\xi -\alpha \eta (Y)\phi
X-\eta (Y)hX.
\end{equation*}

In the Sect. \textbf{6.} we determine $U(X,Y)=(\nabla _{\phi X}\phi )\phi
-(\nabla _{X}\phi )Y$ and other equivalent forms. One of the most important
object is a vector valued 2-form $\Omega $, defined as $\Omega
(X,Y)=R(X,Y)\xi $. We give its form in this section. Note that $\Omega $ is
more complicated for the case $\text{(}M)=3$ and $\alpha \neq const$. There
is a difference between 3- and higher-dimensional manifolds for $\alpha $
non-constant.

The Sect.\textbf{\ 7.} is particularly devoted to almost $\alpha $%
-paracosymplectic manifolds with $\alpha $ const. Such manifolds are also
known as almost $\alpha $-para-Kenmotsu manifolds and we follow this
terminolgy to emphasize that $\alpha =const$. In this section we obtain some
curvature identities for such manifolds. Also we provide more detailed study
of the Jacobi operator $lX=R(X,\xi )\xi $ and related objects. When manifold
has para-Kaehler leaves we measure the commutator $Q\phi -\phi Q$ with the
Ricci operator $Q$. Finally we notice that manifold with $h$ vanishing
everywhere has a simple local structure of a warped product $\mathbb{R}%
\times _{f}M$ of real line and almost para-Kaehler manifold.

When equip the tangent bundle of the manifold with a metric we can study the
problem of the \textquotedblleft harmonicity\textquotedblright\ of the
characteristic vector field $\xi $, where we consider $\xi $ as a map
between the manifold and its tangent bundle. For an almost $\alpha $%
-paracosymplectic manifolds $\xi $ is harmonic if ond only if it is an
eigenvector field of the Ricci operator, $Q\xi =f\xi $. This is proved in
the Sect. \textbf{8.}

In the Sect. \textbf{9.} it is proved that an almost $\alpha $%
-paracosymplectic manifold of dimension $\geqslant 5$ is locally conformal
to almost para-cosymplectic manifold and is locally $D_{1,\alpha }$%
-homothetic to almost para-Kenmotsu manifold near the points where $\alpha
\neq 0$.

In the Sect.\textbf{10.} there are considered so-called almost $\alpha $%
-para Kenmotsu $(\kappa ,\mu ,\nu )$-spaces. These manifolds are depicted by
the requirement that the form $R(X,Y)\xi $ is uniquely determined by the
respective Jacobi operator $lX=R(X,\xi )\xi $ in the way that $R(X,Y)\xi
=\eta (Y)lX-\eta (X)lY$. Then we assume that $l$ has very particular shape $%
l=\kappa \phi ^{2}+\mu h+\nu \phi h$, $\kappa $, $\mu $, $\nu $ are
constants or more generally functions however rather particular. The main
result in this section is that all these manifolds have para-Kahler leaves.

Finally in the last section we classify locally $3$-dimensional almost $%
\alpha $-para Kenmotsu manifolds studying possible canonical forms for the
tensor field $h$. As an application we describe the corresponding Ricci
operators. In this way it is discovered the connection between $3$-manifolds
with harmonic characteristic vector field and $(\kappa ,\mu ,\nu )$-spaces:
if $\xi $ is harmonic vector field then $M$ locally has a structure of $%
(\kappa ,\mu ,\nu )$-space, conversely for $3$-dimensional $(\kappa ,\mu
,\nu )$-space the characteristic vector field is harmonic.

\section{Preliminaries}

\label{preliminaries}

Let $M$ be a $(2n+1)$-dimensional differentiable manifold and $\phi $ is a $%
(1,1)$ tensor field, $\xi $ is a vector field and $\eta $ is a one-form on $%
M.$ Then $(\phi ,\xi ,\eta )$ is called an almost paracontact structure on $%
M $ if

(i)$\ \eta (\xi )=1,$ \ \ $\phi ^{2}=Id-\eta \otimes \xi ,~$

(ii) the tensor field $\phi $ induces an almost paracomplex structure on the
distribution $D=$ ker $\eta ,$ that is the eigendistributions $D^{\pm },$
corresponding to the eigenvalues $\pm $ $1,$ respectively have equal
dimensions, dim $D^{+}=$dim $D^{-}=$ $n.$ The manifold $M$ is said to be
almost paracontact manifold if it is endowed with an almost paracontact
structure \cite{Za}.

Let $M$ be an almost paracontact manifold. $M$ will be called an almost
paracontact metric manifold if it is additionally endowed with a
pseudo-Riemannian metric $g$ of a signature $(n+1,n)$, i.e.%
\begin{equation}
g(\phi X,\phi Y)=-g(X,Y)+\eta (X)\eta (Y).  \label{1}
\end{equation}

For such manifold, we additionally have 
\begin{equation}
\eta (X)=g(X,\xi ),\text{ }\phi (\xi )=0,\text{ }\eta \circ \phi =0.
\label{2}
\end{equation}

Moreover, we can define a skew-symmetric tensor field (a $2$-form) $\Phi $ by%
\begin{equation}
\Phi (X,Y)=g(\phi X,Y),  \label{3}
\end{equation}%
usually called a fundamental form corresponding to the structure. For an
almost $\alpha $-paracosymplectic manifold , there always exists an
orthogonal basis $\{X_{1},\ldots ,X_{n},Y_{1},\ldots ,Y_{n},\xi \}$ such
that $g(X_{i},X_{j})=\delta _{ij}$, $g(Y_{i},Y_{j})=-\delta _{ij}$ and $%
Y_{i}=\phi X_{i}$, for any $i,j\in \left\{ 1,\ldots ,n\right\} $. Such basis
is called a $\phi $-basis.

~On an almost paracontact manifold, one defines the $(2,1)$-tensor field $%
N^{(1)}$ by%
\begin{equation*}
N^{(1)}(X,Y)=\left[ \phi ,\phi \right] (X,Y)-2d\eta (X,Y)\xi ,
\end{equation*}%
where $\left[ \phi ,\phi \right] $ is the Nijenhuis torsion of $\phi $%
\begin{equation*}
\left[ \phi ,\phi \right] (X,Y)=\phi ^{2}\left[ X,Y\right] +\left[ \phi
X,\phi Y\right] -\phi \left[ \phi X,Y\right] -\phi \left[ X,\phi Y\right] .
\end{equation*}

If $N^{(1)}$ \ vanishes identically, then the almost paracontact manifold
(structure) is said to be normal \cite{Za}. The normality condition says
that the almost paracomplex structure $J$ defined on $M\times 
\mathbb{R}
$%
\begin{equation*}
J(X,\lambda \frac{d}{dt})=(\phi X+\lambda \xi ,\eta (X)\frac{d}{dt}),
\end{equation*}%
is integrable.

\section{Almost $\protect\alpha $-Paracosymplectic manifolds}

An almost paracontact metric manifold $M^{2n+1}$, with a\ structure $(\phi
,\xi ,\eta ,g)$ is said to be an almost $\alpha $-paracosymplectic manifold
if the form $\eta $ \ is closed and $d\Phi =2\alpha \eta \wedge \Phi ,$
where $\alpha $ may be a constant or a function on $M.$ Although $\alpha $
is arbitrary we will prove that if dimension $d=2n+1$ of $M$ is $\geqslant 5$%
, then $d\alpha =f\eta $ for a (smooth) function $f$.

For a particular choices of the function $\alpha $ we have the following
classes of manifolds

$\bullet $ almost $\alpha$-para-Kenmotsu manifolds%
\begin{equation*}
d\eta =0,\text{ \ \ }d\Phi =2\alpha\eta \wedge \Phi,\quad \alpha=const.,
\end{equation*}

$\bullet $ normal almost $\alpha$-para-Kenmotsu manifolds are called $\alpha$%
-para-Kenmotsu,

$\bullet $ almost paracosymplectic%
\begin{equation*}
d\eta =0,\text{ \ \ }d\Phi =0,
\end{equation*}%
quite similar normal almost paracosymplectic manifolds are paracosymplectic.

It is clear that almost $0$-para-Kenmotsu manifold is an almost
paracosymplectic manifold.

In what will follow we establish the fundamental properties of the
structure's tensor fields.

\begin{definition}
\label{AX} For an almost $\alpha $-paracosymplectic manifold, define the $%
(1,1)$-tensor field $\mathcal{A}$ by%
\begin{equation}
\mathcal{A}X=-\nabla _{X}\xi .  \label{4}
\end{equation}
\end{definition}

\begin{proposition}
\label{properties1} For an almost $\alpha $-paracosymplectic manifold $%
M^{2n+1}$, we have%
\begin{eqnarray}
i)\text{ }\mathcal{L}_{\xi }\eta &=&0,\text{ }ii)\text{\ }g(\mathcal{A}%
X,Y)=g(X,\mathcal{A}Y),\text{ }iii)\text{\ }\mathcal{A}\xi =0,  \notag \\
iv)\text{\ }\mathcal{L}_{\xi }\Phi &=&2\alpha \Phi ,\ v)(\mathcal{L}_{\xi
}g)(X,Y)=-2g(\mathcal{A}X,Y),\text{\ \ }  \notag \\
vi)\eta (\mathcal{A}X) &=&0,\ vii)\text{\ }d\alpha =f\eta \text{ if\ }%
n\geqslant 2  \label{5}
\end{eqnarray}%
where $\mathcal{L}$ indicates the operator of the Lie differentiation and $X$
is an arbitrary vector field on $M^{2n+1}$.
\end{proposition}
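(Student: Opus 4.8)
The plan is to extract each identity from the two defining conditions $d\eta=0$ and $d\Phi=2\alpha\,\eta\wedge\Phi$ together with the compatibility relations \eqref{1}–\eqref{3}. I would start with the purely algebraic consequences and then turn to the differential ones. For (iii) and (vi): since $\eta$ is closed, $0=2d\eta(X,\xi)=X\eta(\xi)-\xi\eta(X)-\eta([X,\xi])$; using $\eta(\xi)=1$ and $\nabla g=0$ this rearranges to $g(\nabla_X\xi,\xi)=0$ (equivalently $\xi$ is $\nabla$-geodesic, i.e. $\mathcal AX$ is orthogonal to $\xi$), giving (vi), and the same closedness gives $\nabla_\xi\xi=0$, hence $\mathcal A\xi=0$, which is (iii). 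For (i), $\mathcal L_\xi\eta = i_\xi d\eta + d(i_\xi\eta) = 0 + d(1) = 0$ is immediate from $d\eta=0$. For (iv), Cartan's formula gives $\mathcal L_\xi\Phi = i_\xi d\Phi + d(i_\xi\Phi)$; here $i_\xi\Phi=0$ by \eqref{2}–\eqref{3}, and $i_\xi(d\Phi)=i_\xi(2\alpha\,\eta\wedge\Phi)=2\alpha(\eta(\xi)\Phi-\eta\wedge i_\xi\Phi)=2\alpha\Phi$, which is (iv).

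Next, (ii) and (v). The symmetry of $\mathcal A$ in (ii) should follow from closedness of $\eta$: from $2d\eta(X,Y)=g(\nabla_X\xi,Y)-g(\nabla_Y\xi,X)=0$ (using $\eta(Z)=g(Z,\xi)$ and metric compatibility) we get $g(\mathcal AX,Y)=g(X,\mathcal AY)$. For (v), expand $(\mathcal L_\xi g)(X,Y)=\xi g(X,Y)-g([\xi,X],Y)-g(X,[\xi,Y])$ and convert Lie brackets to covariant derivatives via the torsion-free condition; the $\xi$-derivatives cancel and one is left with $-g(\nabla_X\xi,Y)-g(\nabla_Y\xi,X)=-2g(\mathcal AX,Y)$ using (ii).

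The differential identity (vii) is the substantive point and the one I expect to be the main obstacle. The idea is to differentiate the structure equation $d\Phi=2\alpha\,\eta\wedge\Phi$ and use $d^2=0$: we get $0=d(d\Phi)=2\,d\alpha\wedge\eta\wedge\Phi+2\alpha\,d(\eta\wedge\Phi)=2\,d\alpha\wedge\eta\wedge\Phi+2\alpha(-\eta\wedge d\Phi)=2\,d\alpha\wedge\eta\wedge\Phi-4\alpha^2\,\eta\wedge\eta\wedge\Phi=2\,d\alpha\wedge\eta\wedge\Phi$, using $d\eta=0$ and $\eta\wedge\eta=0$. So the obstruction reduces to the linear-algebra fact that, on a $(2n+1)$-manifold with $n\geqslant 2$, the wedge map $\beta\mapsto\beta\wedge\eta\wedge\Phi$ on $1$-forms has kernel exactly the span of $\eta$; this is where the dimension hypothesis enters, since $\Phi$ is a symplectic form of rank $2n$ on $D=\ker\eta$, and wedging a $1$-form with $\eta\wedge\Phi$ annihilates it iff its restriction to $D$ is a multiple of the (nondegenerate, hence for $n\ge 2$ "rank $\ge 4$") form $\Phi|_D$ in a way that forces it to vanish on $D$. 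Concretely: write $d\alpha=f\eta+\omega$ with $\omega$ a $1$-form vanishing on $\xi$; then $0=d\alpha\wedge\eta\wedge\Phi=\omega\wedge\eta\wedge\Phi$, and evaluating on a $\phi$-basis (choosing $X_i,Y_j,\xi$ as above, with $\Phi=\sum_i X^i\wedge Y^i$ in the dual coframe up to sign) shows each component of $\omega$ must be zero provided there are at least two independent $X_i\wedge Y_i$ summands, i.e. $n\geqslant 2$. Hence $\omega=0$ and $d\alpha=f\eta$, which is (vii). I would present this last step by fixing a local $\phi$-basis and doing the short coframe computation, flagging that for $n=1$ the form $\eta\wedge\Phi$ is a top form and wedging with any $1$-form gives zero, so the conclusion genuinely fails in dimension $3$.
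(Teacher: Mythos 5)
Your proposal is correct and follows essentially the same route as the paper: parts (i)--(vi) via Cartan's formula, closedness of $\eta$, the unit length of $\xi$, and metric compatibility, and part (vii) via $d^{2}\Phi=0$ reduced to the statement that wedging a $1$-form with the maximal-rank form $\Phi$ (equivalently with $\eta\wedge\Phi$) is injective modulo $\eta$ when $n\geqslant 2$. The only difference is organizational: the paper contracts $d(2\alpha\eta)\wedge\Phi=0$ with $\xi$ and invokes a separate rank lemma, while you decompose $d\alpha=f\eta+\omega$ and kill $\omega$ by the same linear-algebra fact checked in a $\phi$-coframe; your closing remark that the argument genuinely fails for $n=1$ matches the paper's restriction.
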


\begin{proof}
To prove $i)$ and $iv)$ we use the coboundry formula 
\begin{equation*}
\mathcal{L}_{\xi }\eta =d\circ i_{\xi }\eta +i_{\xi }\circ d\eta ,
\end{equation*}%
for the Lie derivative acting on skew-forms. We note that $i_{\xi }\eta =1$
and $d\eta =0.$ Similarly $(i_{\xi }\Phi )(X)=\Phi (\xi ,X)=0$ for an
arbitrary vector field, hence $i_{\xi }\Phi =0.$ Finally%
\begin{equation}
\text{\ }\mathcal{L}_{\xi }\Phi =i_{\xi }d\Phi =i_{\xi }(2\alpha \eta \wedge
\Phi )=2\alpha (i_{\xi }\eta \wedge \Phi -\eta \wedge i_{\xi }\Phi )=2\alpha
\Phi  \label{izeta}
\end{equation}%
Note $2d\eta (X,Y)=(\nabla _{X}\eta )(Y)-(\nabla _{Y}\eta )(X)=-g(\mathcal{A}%
X,Y)+g(X,\mathcal{A}Y),$ where the last equality follows from the definition
of $\mathcal{A}$. As $\eta $ is closed $\mathcal{A}$ is symmetric (or
self-adjoint), we completed the proof of $ii).$ Using the definition of Lie
differention and $\mathcal{A},$ we obtain 
\begin{eqnarray}
(\mathcal{L}_{\xi }g)(X,Y) &=&\xi g(X,Y)-g(\left[ \xi ,X\right] ,Y)-g(X,%
\left[ \xi ,Y\right] )  \label{LZETA1} \\
&=&-g(\mathcal{A}X,Y)-g(X,\mathcal{A}Y)=-2g(\mathcal{A}X,Y).  \label{LZETA2}
\end{eqnarray}%
(\ref{LZETA2}) implies $v).$ For $\xi $ is unit vector field we have for
arbitrary vector field $X$, $0=Xg(\xi ,\xi )=2g(\nabla _{X}\xi ,\xi )=-2\eta
(AX)=-2g(A\xi ,X)$ which yield $iii)$ and $vi)$. Finally to proof $vii)$ we
need the following
\end{proof}

\begin{lemma}
\label{podzero} Let $\omega $ be a 2-form on a manifold $\bar{M}$, $\text{dim%
}(\bar{M})=n\geqslant 4$ and $\omega $ has maximal rank at every point,
equivalently $\omega ^{\wedge \lbrack \frac{n}{2}]}$ is non-zero at every
point. If for a 1-form $\beta $ on $\bar{M}$, $\beta \wedge \omega =0$ at a
point $p\in \bar{M}$, then $\beta =0$ at $p$. Particularly $\beta $ vanishes
everywhere on $\bar{M}$ if $\beta \wedge \omega $ is everywhere zero.
\end{lemma}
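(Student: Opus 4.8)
The plan is to argue pointwise, so fix $p\in\bar M$ and work in the vector space $V=T_p\bar M$, $\dim V=n\geqslant 4$. The hypothesis is that $\omega\in\Lambda^2 V^*$ has maximal rank, i.e. rank $n$ if $n$ is even and rank $n-1$ if $n$ is odd; equivalently $\omega^{[n/2]}\neq 0$. Suppose $\beta\in V^*$ satisfies $\beta\wedge\omega=0$; we must show $\beta=0$. If $n$ is even, $\omega$ is a symplectic form on $V$, and wedging with $\omega$ is the Lefschetz-type operator $L:\Lambda^k V^*\to\Lambda^{k+2}V^*$; the classical hard Lefschetz fact is that $L:\Lambda^1 V^*\to\Lambda^3 V^*$ is injective once $n\geqslant 4$ (more precisely, $L^{n-k}:\Lambda^k\to\Lambda^{2n-k}$ is an isomorphism, and in low degree this gives injectivity of $L$ itself). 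So $\beta\wedge\omega=0$ forces $\beta=0$ immediately.

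When $n$ is odd I would reduce to the even case. Write $n=2m+1$; then $\omega$ has rank $2m$, so its kernel $K=\{v\in V: i_v\omega=0\}$ is one-dimensional, say $K=\mathbb{R}e_0$. Choose a complement $W$ with $V=\mathbb{R}e_0\oplus W$, so $\dim W=2m$ and $\omega|_W$ is a symplectic form on $W$ (nondegeneracy on $W$ is exactly the statement that $\omega$ has rank $2m$ with kernel $\mathbb{R}e_0$). Pick the dual basis element $e_0^*\in V^*$ vanishing on $W$, and decompose $\beta=c\,e_0^*+\beta_W$ with $\beta_W\in W^*$ (extended by zero on $e_0$). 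Since $i_{e_0}\omega=0$, the form $\omega$ is pulled back from $W$, i.e. $\omega\in\Lambda^2 W^*\subset\Lambda^2 V^*$. Now expand $0=\beta\wedge\omega=c\,(e_0^*\wedge\omega)+\beta_W\wedge\omega$. The first summand lies in $e_0^*\wedge\Lambda^2 W^*$ and the second in $\Lambda^3 W^*$, and these two subspaces of $\Lambda^3 V^*$ intersect trivially; hence both vanish separately. From $c\,(e_0^*\wedge\omega)=0$ and $\omega\neq 0$ we get $c=0$. From $\beta_W\wedge\omega=0$ with $\omega$ symplectic on $W$ and $\dim W=2m\geqslant 4$, the even case gives $\beta_W=0$ provided $2m\geqslant 4$, i.e. $n\geqslant 5$. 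Combining, $\beta=0$ at $p$.

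The one delicate point is the borderline case, and it is where I expect the main obstacle: the hypothesis is $n\geqslant 4$, so I still need $n=4$ (even, handled directly above) but also must handle $n$ odd with $n\geqslant 4$, i.e. the smallest odd value $n=5$, which the reduction above covers since then $2m=4\geqslant 4$; the genuinely excluded low case $n=3$ (where $\beta\wedge\omega=0$ does not force $\beta=0$, e.g. $\beta$ spanning the kernel line) is correctly outside the hypothesis. So the ranges match up, but one has to be careful that the even sub-lemma is invoked only for symplectic vector spaces of dimension $\geqslant 4$, never dimension $2$; that is why $n\geqslant 4$ rather than $n\geqslant 3$ is the right hypothesis, and checking this matching is the crux. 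Finally, the ``particularly'' clause is immediate: if $\beta\wedge\omega\equiv 0$ on $\bar M$ then the pointwise conclusion $\beta_p=0$ holds at every $p$, so $\beta\equiv 0$.
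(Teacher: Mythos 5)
Your argument is correct, but it takes a genuinely different route from the paper. You split into cases by the parity of $n$, invoke the hard Lefschetz isomorphism $L^{m-k}\colon\Lambda^{k}\to\Lambda^{2m-k}$ (hence injectivity of $L\colon\Lambda^{1}\to\Lambda^{3}$ once $2m\geqslant 4$) for the symplectic case, and reduce the odd-dimensional case to the even one by splitting off the one-dimensional kernel of $\omega$ and using the direct sum $\Lambda^{3}V^{*}\supset\Lambda^{3}W^{*}\oplus\left(e_{0}^{*}\wedge\Lambda^{2}W^{*}\right)$. The paper instead argues uniformly in $n$ by contradiction: if $\beta_{p}\neq 0$, pick $v$ with $\beta_{p}(v)=1$ and contract $0=\beta\wedge\omega$ with $v$ to get $\omega_{p}=\beta_{p}\wedge i_{v}\omega_{p}$, whence $\omega_{p}^{\wedge 2}=0$ and so $\omega_{p}^{\wedge[\frac{n}{2}]}=0$ because $[\frac{n}{2}]\geqslant 2$, contradicting maximal rank. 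The paper's proof is shorter, needs no parity split and no external input, and makes the role of the hypothesis $[\frac{n}{2}]\geqslant 2$ (i.e.\ $n\geqslant 4$) completely transparent; your proof is heavier but more structural, and your boundary-case bookkeeping ($n=4$ direct, $n=5$ via $2m=4$, $n=3$ genuinely excluded) is sound. One small slip in your aside: in the $n=3$ counterexample the nonzero $\beta$ with $\beta\wedge\omega=0$ are those \emph{annihilated on} the kernel line of $\omega$ (equivalently, those in the image of $v\mapsto i_{v}\omega$), not a form ``spanning the kernel line''; a covector dual to the kernel direction in fact satisfies $\beta\wedge\omega\neq 0$. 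This does not affect the proof itself.
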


\begin{proof}
Let $\beta \wedge \omega =0$ at $p$ and $\beta _{p}\neq 0$. Then there is a
vector $v$ at $p$, such that $\beta _{p}(v)=1$ and $i_{v}(\beta \wedge
\omega )_{p}=\omega _{p}-\beta _{p}\wedge \gamma _{p}$, $\gamma
_{p}=i_{v}\omega _{p}$. Hence $\omega _{p}=\beta _{p}\wedge \gamma _{p}$ and 
$\omega _{p}^{\wedge 2}=0$. In consequence as $[\frac{n}{2}]\geqslant 2$, $%
\omega _{p}^{\wedge \lbrack \frac{n}{2}]}=0$ which contradicts our
assumption that $\omega $ is of maximal rank.

Now we are going back to the proof of the part $vii)$. We put $\beta
=2\alpha \eta $. So $d\Phi =\beta \wedge \Phi $, applying exterior
differential to this equation and taking interior product with $i_{\xi }$ in
the result, we obtain $0=\gamma \wedge \Phi $ $(i_{\xi }\Phi =0)$
everywhere, $\gamma =i_{\xi }d\beta $. If $\text{dim}(M^{2n+1})\geqslant 5$ (%
$n\geqslant 2$) by the above Lemma $\gamma $ vanishes identically on $%
M^{2n+1}$. Notice $\gamma =i_{\xi }d\beta =2i_{\xi }(d\alpha \wedge \eta )$
as $d\eta =0$ and $0=(i_{\xi }d\alpha )\eta -d\alpha $, ($i_{\xi }\eta =1$).
Hence $d\alpha =f\eta $, $f=i_{\xi }d\alpha $.
\end{proof}

\begin{proposition}
\label{properties2} For an almost $\alpha $-paracosymplectic manifold, we
have%
\begin{equation}
\text{\ \ \ \ \ }\mathcal{A}\phi +\phi \mathcal{A}=-2\alpha \phi ,\text{ \ }%
\nabla _{\xi }\phi =0.  \label{6}
\end{equation}
\end{proposition}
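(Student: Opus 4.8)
The plan is to work directly from the structure equations $d\eta=0$ and $d\Phi=2\alpha\,\eta\wedge\Phi$ together with the defining relations of an almost paracontact metric structure, $\phi^2=\mathrm{Id}-\eta\otimes\xi$, $g(\phi X,\phi Y)=-g(X,Y)+\eta(X)\eta(Y)$, and the Levi-Civita identity for $d\Phi$ and $\nabla\Phi$. The first identity $\mathcal{A}\phi+\phi\mathcal{A}=-2\alpha\phi$ should come out of comparing two expressions for the same object. Recall from Proposition \ref{properties1}(iv) that $\mathcal{L}_\xi\Phi=2\alpha\Phi$; I would expand $(\mathcal{L}_\xi\Phi)(X,Y)$ using the product rule for the Lie derivative of a tensor, $(\mathcal{L}_\xi\Phi)(X,Y)=\xi\Phi(X,Y)-\Phi([\xi,X],Y)-\Phi(X,[\xi,Y])$, rewrite $[\xi,X]=\nabla_\xi X-\nabla_X\xi=\nabla_\xi X+\mathcal{A}X$, and push everything through $\Phi(X,Y)=g(\phi X,Y)$. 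Using metric compatibility of $\nabla$ and the relation $(\mathcal{L}_\xi g)(X,Y)=-2g(\mathcal{A}X,Y)$ from Proposition \ref{properties1}(v), the $\nabla_\xi$ terms should organize into $g((\mathcal{L}_\xi\phi)X,Y)$ plus a symmetric correction, and the outcome is an expression of the form $g\big((\mathcal{L}_\xi\phi)X - \mathcal{A}\phi X + \phi\mathcal{A}X,\ Y\big)$ on one side and $2\alpha\,g(\phi X,Y)$ on the other. Then I would observe that $h=\tfrac12\mathcal{L}_\xi\phi$ anti-commutes with $\phi$ (a standard fact: $\phi h+h\phi=0$, which follows by differentiating $\phi^2=\mathrm{Id}-\eta\otimes\xi$ along $\xi$ and using $\mathcal{L}_\xi\xi=0$, $\mathcal{L}_\xi\eta=0$). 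Combining the symmetric and antisymmetric parts in $X\leftrightarrow Y$ under the bilinear form built from $g$ isolates the symmetric operator identity $\mathcal{A}\phi+\phi\mathcal{A}=-2\alpha\phi$.

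For the second identity, $\nabla_\xi\phi=0$, I would use the formula relating $\nabla\phi$ to $d\Phi$ and the Nijenhuis-type data, but a cleaner route uses what we already have: from $2h=\mathcal{L}_\xi\phi$ we get $2hX=(\mathcal{L}_\xi\phi)X=[\xi,\phi X]-\phi[\xi,X]=\nabla_\xi(\phi X)-\nabla_{\phi X}\xi-\phi\nabla_\xi X+\phi\nabla_X\xi=(\nabla_\xi\phi)X+\mathcal{A}\phi X-\phi\mathcal{A}X$, i.e. $(\nabla_\xi\phi)X=2hX-\mathcal{A}\phi X+\phi\mathcal{A}X$. Independently, the first identity gives $\mathcal{A}\phi=-\phi\mathcal{A}-2\alpha\phi$, so $-\mathcal{A}\phi X+\phi\mathcal{A}X=2\phi\mathcal{A}X+2\alpha\phi X$, and hence $(\nabla_\xi\phi)X=2hX+2\phi\mathcal{A}X+2\alpha\phi X$. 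So it remains to show $hX=-\phi\mathcal{A}X-\alpha\phi X$, equivalently $h=-\phi\mathcal{A}-\alpha\phi$. The plan here is to compute $h$ another way by extracting the $\xi$-direction of $d\Phi$: write out $3\,d\Phi(\xi,X,Y)$ via the invariant formula $d\Phi(\xi,X,Y)=\xi\Phi(X,Y)-X\Phi(\xi,Y)+Y\Phi(\xi,X)-\Phi([\xi,X],Y)+\Phi([\xi,Y],X)-\Phi([X,Y],\xi)$, use $\Phi(\xi,\cdot)=0$, and match against $2\alpha(\eta\wedge\Phi)(\xi,X,Y)=2\alpha\,\Phi(X,Y)$; after re-expressing the Lie brackets through $\nabla$ and $\mathcal{A}$ this pins down the symmetric-in-$(X,Y)$ and antisymmetric-in-$(X,Y)$ parts of $g(\phi\mathcal{A}X,Y)$, and combining with the already-derived $g((\mathcal{L}_\xi\phi)X,Y)$ expression yields $h+\phi\mathcal{A}+\alpha\phi=0$. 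Substituting back gives $\nabla_\xi\phi=0$.

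The main obstacle I anticipate is bookkeeping rather than conceptual: several of these intermediate tensors ($\mathcal{A}$, $h$, $\phi\mathcal{A}$, $\mathcal{A}\phi$) are neither symmetric nor antisymmetric individually, so one must carefully separate each identity into its symmetric and skew parts with respect to the relevant $(X,Y)$ pairing before concluding an operator equation — conflating them is the easy way to lose a factor or a sign. A secondary subtlety is that the invariant formula for $d\Phi$ on a triple involving $\xi$ mixes derivatives of $\Phi$ in arbitrary directions with the Reeb direction, so one should either restrict to a $\phi$-basis or repeatedly invoke $i_\xi\Phi=0$, $\eta\circ\phi=0$, $\mathcal{A}\xi=0$, $\eta(\mathcal{A}X)=0$ (all from Propositions \ref{properties1}) to kill the unwanted terms cleanly. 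Once the correct pairings are set up, both identities in \eqref{6} fall out of linear algebra.
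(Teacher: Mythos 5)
Your first step --- expanding $(\mathcal{L}_{\xi }\Phi )(X,Y)=2\alpha \Phi (X,Y)$ through the Levi-Civita connection --- is the same as the paper's, but the identity it actually produces is $(\nabla _{\xi }\phi )X-\phi \mathcal{A}X-\mathcal{A}\phi X=2\alpha \phi X$, equivalently $2hX-2\mathcal{A}\phi X=2\alpha \phi X$; the expression you wrote, $(\mathcal{L}_{\xi }\phi )X-\mathcal{A}\phi X+\phi \mathcal{A}X$, is identically $(\nabla _{\xi }\phi )X$, which would give the false relation $\nabla _{\xi }\phi =2\alpha \phi $. More seriously, the mechanism you invoke to extract $\mathcal{A}\phi +\phi \mathcal{A}=-2\alpha \phi $ from this single equation --- splitting into $g$-symmetric and $g$-antisymmetric parts --- cannot work: $\mathcal{A}$ is self-adjoint and $\phi $ is skew-adjoint (because $\Phi $ is a $2$-form), so $\mathcal{A}\phi +\phi \mathcal{A}$ is skew-adjoint, $\phi $ is skew-adjoint, and $\nabla _{\xi }\phi $ is skew-adjoint (because $\nabla _{\xi }\Phi $ is a $2$-form). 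Every term of the combined identity already lies in the skew-adjoint part, so the decomposition returns the identity unchanged and does not decouple $\nabla _{\xi }\phi $ from $\mathcal{A}\phi +\phi \mathcal{A}$. Your second paragraph does not repair this: since $i_{\xi }\Phi =0$, the contraction $d\Phi (\xi ,X,Y)$ is nothing but $(i_{\xi }d\Phi )(X,Y)=(\mathcal{L}_{\xi }\Phi )(X,Y)$ over again, so it yields only $h=\mathcal{A}\phi +\alpha \phi $ --- the same information already used --- and converting that into your target $h=-\phi \mathcal{A}-\alpha \phi $ presupposes the first identity. As written the argument is circular.

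The gap is one missing idea and can be filled in two ways. The paper differentiates $\phi ^{2}=Id-\eta \otimes \xi $ along $\xi $ to get $\phi (\nabla _{\xi }\phi )=-(\nabla _{\xi }\phi )\phi $ and then evaluates the combined identity on the $\pm 1$-eigenvectors of $\phi $, which forces $(\nabla _{\xi }\phi )X=-(\nabla _{\xi }\phi )X=0$. Alternatively, the anticommutation $h\phi +\phi h=0$ that you mention in passing does close the argument if used multiplicatively rather than through symmetry splitting: substituting $h=\mathcal{A}\phi +\alpha \phi $ and using $\mathcal{A}\xi =0$ gives
\begin{equation*}
0=h\phi +\phi h=\mathcal{A}\phi ^{2}+\phi \mathcal{A}\phi +2\alpha \phi ^{2}=\mathcal{A}+\phi \mathcal{A}\phi +2\alpha \phi ^{2},
\end{equation*}
and composing with $\phi $ on the left, using $\phi ^{2}=Id-\eta \otimes \xi $, $\eta \circ \mathcal{A}=0$ and $\phi ^{3}=\phi $, yields $\mathcal{A}\phi +\phi \mathcal{A}=-2\alpha \phi $; then $\nabla _{\xi }\phi =2h-\mathcal{A}\phi +\phi \mathcal{A}=\mathcal{A}\phi +\phi \mathcal{A}+2\alpha \phi =0$.
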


\begin{proof}
$(\mathcal{L}_{\xi }\Phi )(X,Y)=\xi \Phi (X,Y)-\Phi (\left[ \xi ,X\right]
,Y)-\Phi (X,\left[ \xi ,Y\right] $ the definition of $\Phi $ follows%
\begin{eqnarray*}
(\mathcal{L}_{\xi }\Phi )(X,Y) &=&\xi g(\phi X,Y)-g(\phi \left[ \xi ,X\right]
,Y)-g(\phi X,\left[ \xi ,Y\right] ) \\
&=&g((\nabla _{\xi }\phi )X-\phi \mathcal{A}X-\mathcal{A}\phi X,Y).
\end{eqnarray*}%
We already know $\mathcal{L}_{\xi }\Phi =$ \ $2\alpha \Phi ,$ therefore
these both identities yield%
\begin{equation*}
2\alpha \phi X=(\nabla _{\xi }\phi )X-\phi \mathcal{A}X-\mathcal{A}\phi X.
\end{equation*}%
We have $\nabla _{\xi }\phi ^{2}=\nabla _{\xi }(Id-\eta \otimes \xi )=0$ for
both $\nabla _{\xi }\eta $ and $\nabla _{\xi }\xi $ vanish identically. From
other hand we have%
\begin{equation*}
(\nabla _{\xi }\phi ^{2})X=\phi (\nabla _{\xi }\phi )X+(\nabla _{\xi }\phi
)\phi X.
\end{equation*}%
Hence $\phi (\nabla _{\xi }\phi )X=-(\nabla _{\xi }\phi )\phi X$ and if $%
\phi X=X,$ that is $X$ is a field of eigenvectors corresponding to $+1$%
-eigenvalue $(\left[ +1\right] $-vector field$),$ then 
\begin{equation*}
2\alpha X=(\nabla _{\xi }\phi )X-\phi \mathcal{A}X-\mathcal{A}X,
\end{equation*}%
applying $\phi $ to the both hands we get%
\begin{equation*}
2\alpha X=\phi (\nabla _{\xi }\phi )X-\phi ^{2}\mathcal{A}X-\phi \mathcal{A}%
X=-(\nabla _{\xi }\phi )X-\mathcal{A}X-\phi \mathcal{A}X,
\end{equation*}%
and these both above identities follow $(\nabla _{\xi }\phi )X=-(\nabla
_{\xi }\phi )X=0.$ The same arguments prove $(\nabla _{\xi }\phi )X=0$ for $%
\left[ -1\right] $-vector field $\phi X=-X.$ Obviously $(\nabla _{\xi }\phi
)\xi =\nabla _{\xi }\phi \xi -\phi \nabla _{\xi }\xi =0.$ Therefore $\nabla
_{\xi }\phi =0$ identically as near each point there is a frame of vector
fields consisting only from $\xi $ and eigenvector fields of $\phi .$
\end{proof}

Let define $h=\frac{1}{2}\mathcal{L}_{\xi }\phi .$ In the following
proposition we establish some properties of the tensor field $h.$

\begin{proposition}
\label{h}For an almost $\alpha $-paracosymplectic manifold, we have the
following relations%
\begin{eqnarray}
\text{\ }g(hX,Y) &=&g(X,hY),  \label{6.1} \\
\text{ }h\circ \phi +\phi \circ h &=&0,\text{ }  \label{6.2} \\
\text{ }h\xi &=&0,  \label{6.3} \\
\nabla \xi &=&\alpha \phi ^{2}+\phi \circ h=-\mathcal{A}.\text{\ \ }
\label{7}
\end{eqnarray}
\end{proposition}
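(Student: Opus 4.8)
The plan is to reduce all four identities to tensor algebra once $h$ has been expressed through $\mathcal{A}$. First I would expand the Lie derivative in terms of the Levi-Civita connection: from
\begin{equation*}
(\mathcal{L}_{\xi}\phi)X=[\xi,\phi X]-\phi[\xi,X]=\nabla_{\xi}\phi X-\nabla_{\phi X}\xi-\phi\nabla_{\xi}X+\phi\nabla_{X}\xi
\end{equation*}
and the definition $\mathcal{A}Z=-\nabla_{Z}\xi$ of (\ref{4}), the combination $\nabla_{\xi}\phi X-\phi\nabla_{\xi}X$ is $(\nabla_{\xi}\phi)X$, which vanishes by Proposition \ref{properties2}; this leaves the key identity $2h=\mathcal{A}\circ\phi-\phi\circ\mathcal{A}$. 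Everything that follows uses only this together with facts already available: $\mathcal{A}$ is self-adjoint with $\mathcal{A}\xi=0$ and $\eta\circ\mathcal{A}=0$ (Proposition \ref{properties1}), $\phi$ is $g$-skew-symmetric with $\phi^{2}=Id-\eta\otimes\xi$ and $\phi\xi=0$ (Preliminaries), and $\mathcal{A}\phi+\phi\mathcal{A}=-2\alpha\phi$ (Proposition \ref{properties2}).

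Next I would dispatch the easy identities. For (\ref{6.3}): $2h\xi=\mathcal{A}\phi\xi-\phi\mathcal{A}\xi=0$ since $\phi\xi=0=\mathcal{A}\xi$. For (\ref{6.1}): from $2g(hX,Y)=g(\mathcal{A}\phi X,Y)-g(\phi\mathcal{A}X,Y)$, move $\mathcal{A}$ across by self-adjointness and $\phi$ across by $g$-skewness in each term; one lands on $2g(hX,Y)=g(X,(\mathcal{A}\phi-\phi\mathcal{A})Y)=2g(X,hY)$. For (\ref{6.2}): $2(h\phi+\phi h)=(\mathcal{A}\phi-\phi\mathcal{A})\phi+\phi(\mathcal{A}\phi-\phi\mathcal{A})=\mathcal{A}\phi^{2}-\phi^{2}\mathcal{A}$, and substituting $\phi^{2}=Id-\eta\otimes\xi$ while using $\mathcal{A}\xi=0$ and $\eta\circ\mathcal{A}=0$ makes both $\mathcal{A}\phi^{2}$ and $\phi^{2}\mathcal{A}$ equal to $\mathcal{A}$, so their difference vanishes. (Alternatively, $\mathcal{L}_{\xi}\phi^{2}=2(h\phi+\phi h)$ while $\mathcal{L}_{\xi}\phi^{2}=\mathcal{L}_{\xi}(Id)-\mathcal{L}_{\xi}(\eta\otimes\xi)=0$ because $\mathcal{L}_{\xi}Id=0$, $\mathcal{L}_{\xi}\eta=0$, $\mathcal{L}_{\xi}\xi=0$.)

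For (\ref{7}) the equality $\nabla\xi=-\mathcal{A}$ is nothing but the definition (\ref{4}). To identify $-\mathcal{A}$ with $\alpha\phi^{2}+\phi\circ h$, I would subtract the key identity from $\mathcal{A}\phi+\phi\mathcal{A}=-2\alpha\phi$, obtaining $\phi\mathcal{A}=-\alpha\phi-h$, then apply $\phi$ on the left and use $\phi^{2}=Id-\eta\otimes\xi$ with $\eta\circ\mathcal{A}=0$ to get $\mathcal{A}=\phi^{2}\mathcal{A}=-\alpha\phi^{2}-\phi h$, which rearranges to $\nabla\xi=-\mathcal{A}=\alpha\phi^{2}+\phi h$.

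The only step needing attention is the key identity itself, where one must organize the expansion of $[\xi,\phi X]-\phi[\xi,X]$ correctly and invoke $\nabla_{\xi}\phi=0$; after that there is no real obstacle, the remaining work being routine algebra in the endomorphism ring of $TM$. If one preferred not to use $\nabla_{\xi}\phi=0$, the relation between $h$ and $\mathcal{A}$ can instead be read off from $\mathcal{L}_{\xi}\Phi=2\alpha\Phi$ (Proposition \ref{properties1}) via $(\mathcal{L}_{\xi}\Phi)(X,Y)=(\mathcal{L}_{\xi}g)(\phi X,Y)+2g(hX,Y)$ together with $(\mathcal{L}_{\xi}g)(X,Y)=-2g(\mathcal{A}X,Y)$, yielding $h=\mathcal{A}\phi+\alpha\phi$, which is equivalent to the key identity modulo $\mathcal{A}\phi+\phi\mathcal{A}=-2\alpha\phi$.
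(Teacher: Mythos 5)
Your proof is correct and follows essentially the same route as the paper: both rest on the identity $2h=\mathcal{A}\phi -\phi \mathcal{A}$, obtained by expanding $\mathcal{L}_{\xi }\phi$ through the Levi-Civita connection and invoking $\nabla _{\xi }\phi =0$, together with the symmetry of $\mathcal{A}$ and the anticommutation relation $\mathcal{A}\phi +\phi \mathcal{A}=-2\alpha \phi$. The only (harmless) difference is that you obtain (\ref{7}) by purely algebraic combination of these two relations, whereas the paper first re-derives $h=\mathcal{A}\phi +\alpha \phi$ from $\mathcal{L}_{\xi }\Phi =2\alpha \Phi$ and then applies $\phi$ from the right; as you note at the end, the two are equivalent.
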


\begin{proof}
Similarly as in the Proposition \ref{properties2} we have%
\begin{equation}
(\mathcal{L}_{\xi }\phi ^{2})X=\phi (\mathcal{L}_{\xi }\phi )X+(\mathcal{L}%
_{\xi }\phi )\phi X=2\phi hX+2h\phi X.  \label{irem1}
\end{equation}%
and%
\begin{equation}
\mathcal{L}_{\xi }\phi ^{2}=-(\mathcal{L}_{\xi }\eta )\otimes \xi =0.
\label{irem2}
\end{equation}%
From (\ref{irem1}) and (\ref{irem2}) we get (\ref{6.2})$.$ By using the
formula $(\mathcal{L}_{\xi }\phi )X=\left[ \xi ,\phi X\right] -\phi \left[
\xi ,X\right] =\nabla _{\xi }\phi X-\nabla _{\phi X}\xi -\phi (\nabla _{\xi
}X-\nabla _{X}\xi )$ we obtain%
\begin{equation}
h=\frac{1}{2}(\mathcal{A}\phi -\phi \mathcal{A)}.  \label{hh}
\end{equation}%
The last formula and the properties of $\phi $ and $\mathcal{A(}$symmetry)
follow that $h$ is also a symmetric tensor field, $g(hX,Y)=g(X,hY).$
Moreover $h\xi =0$ and $\eta \circ h=0.$ Using (\ref{izeta}), (\ref{LZETA2})
and the following identity%
\begin{equation*}
(\mathcal{L}_{\xi }\Phi )(X,Y)=(\mathcal{L}_{\xi }g)(\phi X,Y)+g((\mathcal{L}%
_{\xi }\phi )X,Y),
\end{equation*}%
we obtain%
\begin{equation}
\alpha \phi =-\mathcal{A}\phi +h.  \label{alfai}
\end{equation}%
If we apply $\phi $ from the right to the (\ref{alfai}) and use the
anticommutative $h$ and $\phi ,$ we have%
\begin{equation*}
\alpha \phi ^{2}+\phi \circ h=-\mathcal{A=}\nabla \xi .
\end{equation*}
\end{proof}

\begin{corollary}
\label{trace}All the above Propositions imply the following formulas for the
traces%
\begin{eqnarray}
tr(\mathcal{A}\phi ) &=&tr(\phi \mathcal{A})=0,\ tr(h\phi )=tr(\phi h)=0, 
\notag \\
tr(\mathcal{A}) &=&-2\alpha n,\text{ \ }tr(h)=0.  \label{8}
\end{eqnarray}
\end{corollary}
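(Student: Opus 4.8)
The plan is to extract each trace identity directly from the tensor identities already established, using only the linear-algebraic facts that $\phi$, $\mathcal{A}$, $h$ act on a $(2n+1)$-dimensional space and that $\phi^2 = Id - \eta\otimes\xi$. I would organize the computation around the $\phi$-basis $\{X_1,\dots,X_n,Y_1,\dots,Y_n,\xi\}$ with $Y_i=\phi X_i$, $g(X_i,X_j)=\delta_{ij}$, $g(Y_i,Y_j)=-\delta_{ij}$, so that the trace of any $(1,1)$-tensor $T$ is $\operatorname{tr}T=\sum_i g(TX_i,X_i)-\sum_i g(TY_i,Y_i)+g(T\xi,\xi)$.

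First, for $\operatorname{tr}(h\phi)=\operatorname{tr}(\phi h)=0$: from (\ref{6.2}) we have $h\phi=-\phi h$, hence $\operatorname{tr}(h\phi)=-\operatorname{tr}(\phi h)$; but trace is commutative, $\operatorname{tr}(h\phi)=\operatorname{tr}(\phi h)$, so both vanish. The identical two-line argument gives $\operatorname{tr}(\mathcal{A}\phi)=\operatorname{tr}(\phi\mathcal{A})=0$: indeed, from (\ref{6}) we have $\mathcal{A}\phi+\phi\mathcal{A}=-2\alpha\phi$, so $\operatorname{tr}(\mathcal{A}\phi)+\operatorname{tr}(\phi\mathcal{A})=-2\alpha\operatorname{tr}(\phi)$, and since $\phi$ is $g$-skew-symmetric (from $\Phi(X,Y)=g(\phi X,Y)$ being a $2$-form) its trace is $0$; combined with $\operatorname{tr}(\mathcal{A}\phi)=\operatorname{tr}(\phi\mathcal{A})$ this forces both to be $0$. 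Alternatively one sees $\operatorname{tr}(\phi)=0$ immediately from the $\phi$-basis since $\phi X_i=Y_i$ has no $X_i$-component and $\phi\xi=0$.

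Next, $\operatorname{tr}(h)=0$: by (\ref{6.2}), $h$ anticommutes with $\phi$, so $h$ maps the $(+1)$-eigendistribution $D^+$ of $\phi$ into the $(-1)$-eigendistribution $D^-$ and vice versa, and $h\xi=0$ by (\ref{6.3}). Thus in the $\phi$-basis the diagonal entries of $h$ all vanish (an endomorphism swapping $D^+\leftrightarrow D^-$ and killing $\xi$ has zero diagonal in any basis adapted to the splitting $D^+\oplus D^-\oplus\mathbb{R}\xi$), whence $\operatorname{tr}(h)=0$. Finally, $\operatorname{tr}(\mathcal{A})=-2\alpha n$: from (\ref{7}), $\mathcal{A}=-(\alpha\phi^2+\phi h)$, so $\operatorname{tr}(\mathcal{A})=-\alpha\operatorname{tr}(\phi^2)-\operatorname{tr}(\phi h)$. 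We have just shown $\operatorname{tr}(\phi h)=0$, and $\operatorname{tr}(\phi^2)=\operatorname{tr}(Id-\eta\otimes\xi)=(2n+1)-1=2n$. Hence $\operatorname{tr}(\mathcal{A})=-2\alpha n$.

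I do not anticipate a genuine obstacle here — the corollary is purely bookkeeping — but the one point worth stating carefully is the vanishing of $\operatorname{tr}(\phi)$ and the diagonal-freeness of $h$, which rest on the eigendistribution structure from Preliminaries (item (ii) of the almost paracontact structure) rather than on any curvature input; I would make that explicit so the reader sees why the "$-2\alpha\operatorname{tr}\phi$" and "$-\operatorname{tr}(\phi h)$" terms drop out.
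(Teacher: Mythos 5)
Your proof is correct and is exactly the routine bookkeeping the paper leaves to the reader (the corollary is stated without proof): the anticommutation relations kill $tr(\mathcal{A}\phi)$, $tr(\phi\mathcal{A})$, $tr(h\phi)$, $tr(\phi h)$ via cyclicity of the trace together with $tr(\phi)=0$, and $tr(\mathcal{A})=-2\alpha n$ then follows from (\ref{7}) and $tr(\phi^{2})=2n$. As a minor remark, $tr(h)=0$ also drops out in one line from (\ref{hh}), $h=\frac{1}{2}(\mathcal{A}\phi-\phi\mathcal{A})$, by cyclicity, without invoking the eigendistribution splitting.
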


\section{Para-Kaehler manifolds}

This is an auxiliary section. The general reference for the notions which
appear here is \cite{CFG}. We recall here basic concepts of a para-Hermitian
geometry. An even dimensional manifold $M^{2n\text{ }}$ endowed with a pair
an almost para-Hermitian structure $(J,<,>)$, where $J$ is an almost
para-complex structure and $<,>$ is a pseudo-Riemannian metric. These tensor
fields are subject of the following conditions%
\begin{equation*}
J^{2}=Id,\text{ \ \ \ }<JX,JY>=-<X,Y>,
\end{equation*}%
as it is common $X,Y$ denote vector fields. The manifold $M$ endowed with
this structure is called an almost para-Hermitian manifold. The almost
para-Hermitian manifold $M$ is para-Kaehler if the almost para-complex
structure $J$ is a covariant constant $\nabla J=0$, with respect to the
Levi-Civita connection. An almost para-complex structure is integrable if
and only if the Nijenhuis torsion of $J$ \ vanishes identically%
\begin{equation*}
N_{J}(X,Y)=J^{2}\left[ X,Y\right] +\left[ JX,JY\right] -J\left[ JX,Y\right]
-J\left[ X,JY\right] =0.
\end{equation*}%
An almost para-complex structure of a para-Kaehler manifold is always
integrable. In the terms of the local coordinates maps, integrability is
equivalent to the existence of a set of maps, covering the manifold, the
para-complex structure has constant coefficients in the local map
coordinates. If $p\in M$ is a point, then near $p$ we have coordinates $%
(x^{1},...x^{n},y^{1},...,y^{n}),$ the local components $J_{i}^{k}=const.$
are constants.

\section{Almost $\protect\alpha $-paracosymplectic manifolds with
para-Kaehler leaves}

The idea is to restrict further our consideration to the particular class of
manifolds. However this class of manifolds is wide enough to provide
interesting results and examples. In fact each $3$-dimensional manifold
belongs to this class. Let $M^{2n+1}=(M,\phi ,\xi ,\eta ,g)$ be an almost $%
\alpha $-paracosymplectic manifold. By the definition the form $\eta $ is
closed therefore a distribution $\mathcal{D}:\eta =0$ is completely
integrable. $\mathcal{D}$ defines a foliation $\mathcal{F}.$ Each leaf
carries an almost para-Kaehler structure $(J,<,>)$%
\begin{equation*}
J\bar{X}=\phi \bar{X},\text{ \ \ \ \ }\left\langle \bar{X},\bar{Y}%
\right\rangle =g(\bar{X},\bar{Y}),
\end{equation*}%
$\bar{X},\bar{Y}$ are vector fields tangent to the leaf. If this structure
is para-Kaehler, leaf is called a para-Kaehler leaf of the manifold $M.$

\begin{lemma}
\label{second}An almost $\alpha $-paracosymplectic manifold $M$ has
para-Kaehler leaves if and only if%
\begin{equation*}
(\nabla _{X}\phi )Y=g(\mathcal{A}X,\phi Y)\xi +\eta (Y)\phi \mathcal{A}X,%
\text{ \ \ \ }\mathcal{A}=-\nabla \xi .
\end{equation*}
\end{lemma}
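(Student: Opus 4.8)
The plan is to relate the second fundamental form of a leaf $L$ of the foliation $\mathcal{F}$ to the tensor $\mathcal{A}$, and then to express the para-Kaehler condition $\bar\nabla J = 0$ on $L$ (where $\bar\nabla$ is the induced Levi-Civita connection) in terms of $\nabla\phi$ on $M$. First I would invoke the Gauss formula: for vector fields $\bar X,\bar Y$ tangent to $L$, $\nabla_{\bar X}\bar Y = \bar\nabla_{\bar X}\bar Y + \mathrm{II}(\bar X,\bar Y)$, where $\mathrm{II}(\bar X,\bar Y) = \sigma(\bar X,\bar Y)\,\xi$ is normal to $L$ (the normal bundle is spanned by $\xi$ since $g$ is nondegenerate on $\mathcal{D}$ for an almost paracontact metric structure). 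The coefficient $\sigma(\bar X,\bar Y) = g(\nabla_{\bar X}\bar Y,\xi) = -g(\bar Y,\nabla_{\bar X}\xi) = g(\mathcal{A}\bar X,\bar Y)$; by symmetry of $\mathcal{A}$ (Proposition \ref{properties1}, ii) this is symmetric, consistent with $\mathrm{II}$ being symmetric. So the shape operator of $L$ with respect to $\xi$ is exactly $\mathcal{A}|_{\mathcal{D}}$.

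Next I would compute $\bar\nabla J = 0$ explicitly. Since $J\bar Y = \phi\bar Y$ is tangent to $L$, the Gauss formula gives $\nabla_{\bar X}(\phi\bar Y) = \bar\nabla_{\bar X}(\phi\bar Y) + g(\mathcal{A}\bar X,\phi\bar Y)\,\xi$, and similarly $\phi(\nabla_{\bar X}\bar Y) = \phi(\bar\nabla_{\bar X}\bar Y)$ (applying $\phi$ kills the $\xi$-component). Hence
\begin{equation*}
(\nabla_{\bar X}\phi)\bar Y = (\bar\nabla_{\bar X}J)\bar Y + g(\mathcal{A}\bar X,\phi\bar Y)\,\xi .
\end{equation*}
Therefore $L$ is para-Kaehler, i.e. $\bar\nabla J = 0$, precisely when $(\nabla_{\bar X}\phi)\bar Y = g(\mathcal{A}\bar X,\phi\bar Y)\,\xi$ for all $\bar X,\bar Y$ tangent to $\mathcal D$. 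This establishes the claimed formula on $\mathcal D$; it remains to extend it to arbitrary vector fields, i.e. to check the terms involving $\xi$.

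The extension is where the bookkeeping lives, but it is forced by the structure. For $Y = \xi$: $(\nabla_X\phi)\xi = \nabla_X(\phi\xi) - \phi\nabla_X\xi = -\phi(-\mathcal{A}X) = \phi\mathcal{A}X$, which matches the right-hand side $g(\mathcal{A}X,\phi\xi)\xi + \eta(\xi)\phi\mathcal{A}X = \phi\mathcal{A}X$ since $\phi\xi = 0$. For general $X$ with $\bar X = X - \eta(X)\xi$ the $\mathcal{D}$-component and $\xi$-component separate linearly, and one uses $\nabla_\xi\phi = 0$ (Proposition \ref{properties2}) together with $\eta\circ\mathcal A = 0$ and $\mathcal A\xi = 0$ to see that the $X=\xi$ computation and the $\mathcal{D}$-computation splice into the single formula $(\nabla_X\phi)Y = g(\mathcal{A}X,\phi Y)\xi + \eta(Y)\phi\mathcal{A}X$ for all $X,Y$. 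I expect the main obstacle to be purely organizational: carefully decomposing $X,Y$ into their $\mathcal{D}$- and $\xi$-parts and verifying that no cross terms are lost — in particular confirming that $g(\mathcal{A}X,\phi Y)$ is insensitive to replacing $X,Y$ by their $\mathcal{D}$-projections, which follows from $\phi\xi=0$ and $\mathcal A\xi = 0$. There is no deep difficulty beyond the Gauss-formula identification of the shape operator with $\mathcal{A}$.
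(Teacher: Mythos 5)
Your proposal is correct and follows essentially the same route as the paper: the Gauss formula identifies the shape operator of a leaf with $\mathcal{A}|_{\mathcal{D}}$, giving $(\nabla_{\bar X}\phi)\bar Y=(\bar\nabla_{\bar X}J)\bar Y+g(\mathcal{A}\bar X,\phi\bar Y)\xi$ and hence the equivalence on $\mathcal{D}$, and the extension to arbitrary $X,Y$ uses exactly the decomposition $X=(X-\eta(X)\xi)+\eta(X)\xi$ together with $\nabla_{\xi}\phi=0$ and $(\nabla_{X}\phi)\xi=\phi\mathcal{A}X$, as in the paper. No gaps.
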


\begin{proof}
Let $\mathcal{F}_{a}$ be a leaf passing through a point $a\in M$. The
characteristic vector field is a normal vector field to $\mathcal{F}_{a}$,
the restriction $\mathcal{A\mid }_{\mathcal{F}}\mathcal{=}-\nabla \xi \mid _{%
\mathcal{F}}$ is the Weingarten operator (the shape tensor). The Gauss
equation%
\begin{equation*}
\nabla _{\bar{X}}\bar{Y}=\bar{\nabla}_{\bar{X}}\bar{Y}+II(\bar{X},\bar{Y}%
)\xi ,
\end{equation*}%
yields%
\begin{eqnarray}
(\nabla _{\bar{X}}\phi )\bar{Y} &=&\nabla _{\bar{X}}\phi \bar{Y}-\phi \nabla
_{\bar{X}}\bar{Y}  \notag \\
&=&\bar{\nabla}_{_{\bar{X}}}J\bar{Y}+II(\bar{X},\phi \bar{Y})\xi  \label{J1}
\\
&&-\phi (\bar{\nabla}_{\bar{X}}\bar{Y}+II(\bar{X},\bar{Y})\xi )  \notag \\
&=&(\bar{\nabla}_{_{\bar{X}}}J)\bar{Y}+II(\bar{X},\phi \bar{Y})\xi =II(\bar{X%
},\phi \bar{Y})\xi  \notag
\end{eqnarray}%
here by assumption $\bar{\nabla}J=0$ identically, $II$ is the second
fundamental form of $\mathcal{F},$ $II(\bar{X},\bar{Y})=g(-\nabla _{\bar{X}%
}\xi ,\bar{Y}).$ The above identity implies $(\nabla _{X}\phi )Y=g(\mathcal{A%
}X,\phi Y)\xi $ for arbitrary vector fields on the manifold $M$ such that $%
\eta (X)=\eta (Y)=0.$ For arbitrary $X,Y$ we have a decomposition $X=(X-\eta
(X)\xi )+\eta (X)\xi .$ To finish the proof we need to remind that $\nabla
_{\xi }\phi =0$ and $(\nabla _{X}\phi )\xi =\phi \mathcal{A}X.$
\end{proof}

\begin{proposition}
Let $M^{2n+1}=(M,\phi ,\xi ,\eta ,g)$ be an almost $\alpha $%
-paracosymplectic manifold. Then the foliation $\mathcal{F}$, when $\alpha
=0 $ $($resp.$\alpha \neq 0)$,$\mathcal{F}$ is totally geodesic
(resp.totally umbilical$)$ if and only if $h=0.$
\end{proposition}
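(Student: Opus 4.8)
The plan is to express the second fundamental form of the foliation $\mathcal{F}$ directly in terms of the structure tensors and read off the totally geodesic / totally umbilical conditions. Recall from Proposition \ref{h} that $\nabla \xi = \alpha \phi^2 + \phi \circ h = -\mathcal{A}$. For vector fields $\bar X,\bar Y$ tangent to a leaf (i.e. $\eta(\bar X)=\eta(\bar Y)=0$), the second fundamental form is $II(\bar X,\bar Y)=g(-\nabla_{\bar X}\xi,\bar Y)=g(\mathcal{A}\bar X,\bar Y)$, exactly as used in Lemma \ref{second}. So the first step is to compute, for such $\bar X,\bar Y$,
\begin{equation*}
II(\bar X,\bar Y)=g(\mathcal{A}\bar X,\bar Y)=-g((\alpha\phi^2+\phi h)\bar X,\bar Y)=-\alpha g(\phi^2\bar X,\bar Y)-g(\phi h\bar X,\bar Y).
\end{equation*}
Since $\bar X$ is tangent to the leaf, $\eta(\bar X)=0$, so $\phi^2\bar X=\bar X-\eta(\bar X)\xi=\bar X$, giving $II(\bar X,\bar Y)=-\alpha g(\bar X,\bar Y)-g(\phi h\bar X,\bar Y)$.

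The second step is to analyze the tensor $B(\bar X,\bar Y):=-g(\phi h\bar X,\bar Y)=g(h\bar X,\phi\bar Y)$ (using skew-symmetry of $\Phi$, i.e. $g(\phi U,V)=-g(U,\phi V)$, together with symmetry of $h$). I want to show that $B$ is a symmetric bilinear form on the leaf that is \emph{trace-free}, and moreover that it vanishes identically exactly when $h=0$ on the leaf. Symmetry: $g(h\bar X,\phi\bar Y)=g(\bar X,h\phi\bar Y)=-g(\bar X,\phi h\bar Y)=g(\phi\bar X,h\bar Y)=g(h\bar Y,\phi\bar X)$, using $h\phi=-\phi h$ and symmetry of both $h$ and $\phi$ (the latter meaning $g(\phi U,V)=-g(U,\phi V)$, which makes $g(\phi U,V)$ skew but composed with the symmetric $h$ in this arrangement it comes out symmetric). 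For the trace-free property, in a $\phi$-basis one computes $\sum_i[g(h X_i,\phi X_i)+g(hY_i,\phi Y_i)]$ and uses $Y_i=\phi X_i$, $h\phi=-\phi h$, together with $\phi^2=Id$ on $\mathcal{D}$, to see the terms cancel in pairs; this is essentially $tr(\phi h)=0$ from Corollary \ref{trace} restricted to the leaf. The key point I need is the converse: if $B\equiv 0$ on every leaf, then $\phi h$ restricted to $\mathcal{D}$ is skew-symmetric (since $g(\phi h\bar X,\bar Y)=0$ for all leaf-tangent pairs would force it), but $\phi h$ is also symmetric as a composition ($g(\phi h\bar X,\bar Y)=-g(h\bar X,\phi\bar Y)=-g(\bar X,h\phi\bar Y)=g(\bar X,\phi h\bar Y)$ — wait, this already shows $\phi h$ symmetric on $\mathcal{D}$), hence $\phi h=0$ on $\mathcal{D}$, and applying $\phi$ and using $\phi^2=Id$ on $\mathcal{D}$ gives $h=0$ on $\mathcal{D}$, i.e. $h=0$ (since $h\xi=0$ already). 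Actually the clean statement is: $\phi h|_{\mathcal{D}}$ is a symmetric operator, so $g(\phi h\bar X,\bar Y)=0$ for all leaf-tangent $\bar X,\bar Y$ iff $\phi h|_{\mathcal{D}}=0$ iff $h=0$.

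Now I assemble the conclusion. If $h=0$, then $II(\bar X,\bar Y)=-\alpha g(\bar X,\bar Y)$ for all leaf-tangent $\bar X,\bar Y$; when $\alpha=0$ this is $II\equiv 0$, so $\mathcal{F}$ is totally geodesic, and when $\alpha\neq 0$ the mean curvature normal is $-\alpha\xi$ and $II(\bar X,\bar Y)=g(\bar X,\bar Y)\cdot(-\alpha)$ is pointwise proportional to the induced metric, so $\mathcal{F}$ is totally umbilical. Conversely, if $\mathcal{F}$ is totally geodesic (case $\alpha=0$), then $II\equiv 0$ forces $g(\phi h\bar X,\bar Y)=0$ on $\mathcal{D}$, hence $h=0$ by the previous step. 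If $\mathcal{F}$ is totally umbilical (case $\alpha\neq 0$), then $II(\bar X,\bar Y)=\lambda\, g(\bar X,\bar Y)$ for some function $\lambda$; comparing with $II(\bar X,\bar Y)=-\alpha g(\bar X,\bar Y)+g(h\bar X,\phi\bar Y)$ gives $g(h\bar X,\phi\bar Y)=(\lambda+\alpha)g(\bar X,\bar Y)$, and since the left side is trace-free (shown above) while taking the trace of the right side over a $\phi$-basis of $\mathcal{D}$ gives $2n(\lambda+\alpha)$, we get $\lambda=-\alpha$ and hence $g(\phi h\bar X,\bar Y)=0$ on $\mathcal{D}$, so again $h=0$.

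The main obstacle is the bookkeeping with signs in the pseudo-Riemannian setting — keeping straight that $g(\phi\cdot,\cdot)$ is skew while $h$ is self-adjoint and they anticommute, so that the composite $\phi h$ is self-adjoint on $\mathcal{D}$ (which is what makes "$II$ has no trace-free part $\Leftrightarrow h=0$" work) — and making sure the trace computation over a $\phi$-basis correctly uses the indefinite inner products $g(X_i,X_j)=\delta_{ij}$, $g(Y_i,Y_j)=-\delta_{ij}$. I would double-check the trace-free claim by writing $tr_{\mathcal{D}}(\phi h)=\sum_i[g(\phi h X_i,X_i)-g(\phi hY_i,Y_i)]$ (note the sign from the metric on the $Y_i$) and substituting $Y_i=\phi X_i$.
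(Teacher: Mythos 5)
Your proposal is correct and follows essentially the same route as the paper: both compute $II(\bar X,\bar Y)=-\alpha g(\bar X,\bar Y)-g(\phi h\bar X,\bar Y)$ from the Gauss equation and the identity $\nabla\xi=\alpha\phi^2+\phi h$. You additionally spell out the symmetry and trace-freeness of $\phi h$ on $\ker\eta$, which the paper leaves implicit when passing from the umbilicity condition to $h=0$; this is a worthwhile completion of the same argument, not a different one.
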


\begin{proof}
Using Gauss equation we have $II(\bar{X},\bar{Y})=g(\nabla _{\bar{X}}\bar{Y}%
,\xi )=-g(\bar{Y},\nabla _{\bar{X}}\xi )=-g(\bar{Y},\alpha \phi ^{2}\bar{X}%
+\phi h\bar{X})=-\alpha g(\bar{X},\bar{Y})-g(\bar{X},\phi h\bar{Y})$ for all 
$\bar{X},\bar{Y}\in \Gamma (D).$ This completes proof.
\end{proof}

\begin{proposition}
\label{alpha}An almost $\alpha $-paracosymplectic manifold $M$ has
para-Kaehler leaves if and only if%
\begin{equation}
(\nabla _{X}\phi )Y=\alpha g(\phi X,Y)\xi +g(hX,Y)\xi -\alpha \eta (Y)\phi
X-\eta (Y)hX  \label{MEHMET}
\end{equation}%
for $\alpha =0$ it is a formula known for almost paracosymplectic manifolds.
\end{proposition}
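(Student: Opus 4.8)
The plan is to reduce Proposition \ref{alpha} to Lemma \ref{second} by simply rewriting the right-hand side $g(\mathcal{A}X,\phi Y)\xi + \eta(Y)\phi\mathcal{A}X$ in terms of $\alpha$, $\phi$, $h$, $g$ and $\eta$. Since the equivalence ``$M$ has para-Kaehler leaves $\iff (\nabla_X\phi)Y = g(\mathcal{A}X,\phi Y)\xi + \eta(Y)\phi\mathcal{A}X$'' has already been established, it suffices to show that the right-hand side of \eqref{MEHMET} equals this expression for \emph{all} vector fields $X,Y$, using only the structural identities of Proposition \ref{h}, in particular $\nabla\xi = -\mathcal{A} = \alpha\phi^2 + \phi\circ h$, i.e. $\mathcal{A} = -\alpha\phi^2 - \phi h$.

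First I would compute the first term. Using $\mathcal{A}X = -\alpha\phi^2 X - \phi h X = -\alpha(X - \eta(X)\xi) - \phi h X$ and the fact that $g(\phi h X, \phi Y) = -g(hX,Y) + \eta(hX)\eta(Y) = -g(hX,Y)$ (by \eqref{1}, \eqref{2} and $\eta\circ h = 0$), together with $g(\xi,\phi Y) = (\eta\circ\phi)(Y) = 0$, I get
\begin{equation*}
g(\mathcal{A}X,\phi Y) = -\alpha g(X,\phi Y) + \alpha\eta(X)g(\xi,\phi Y) - g(\phi hX,\phi Y) = -\alpha g(X,\phi Y) + g(hX,Y).
\end{equation*}
Now $-\alpha g(X,\phi Y) = \alpha g(\phi X, Y) = \alpha\Phi(X,Y)$ by skew-symmetry of $\Phi$ (equivalently $g(X,\phi Y) = -g(\phi X, Y)$), so the first term becomes $\big(\alpha g(\phi X,Y) + g(hX,Y)\big)\xi$, which matches the first two terms of \eqref{MEHMET}.

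Next I would handle the second term $\eta(Y)\phi\mathcal{A}X$. Applying $\phi$ to $\mathcal{A}X = -\alpha\phi^2 X - \phi h X$ and using $\phi^3 = \phi$ (from $\phi^2 = Id - \eta\otimes\xi$ and $\phi\xi = 0$) and $\phi^2 h X = hX - \eta(hX)\xi = hX$ gives $\phi\mathcal{A}X = -\alpha\phi X - hX$, hence $\eta(Y)\phi\mathcal{A}X = -\alpha\eta(Y)\phi X - \eta(Y)hX$, exactly the last two terms of \eqref{MEHMET}. Adding the two computations yields the stated identity, and conversely, running the algebra backwards shows \eqref{MEHMET} forces $(\nabla_X\phi)Y = g(\mathcal{A}X,\phi Y)\xi + \eta(Y)\phi\mathcal{A}X$, so Lemma \ref{second} closes the argument in both directions. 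There is no real obstacle here — the only points requiring care are the bookkeeping with $\eta\circ h = 0$, $\eta\circ\phi = 0$ and the sign conventions in \eqref{1} and \eqref{3}; the case $\alpha = 0$ needs no separate treatment, as it is simply the specialization of the general formula and recovers the classical almost paracosymplectic identity.
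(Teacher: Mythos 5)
Your proposal is correct and follows essentially the same route as the paper: both substitute $\mathcal{A}=-(\alpha\phi^{2}+\phi h)$ from (\ref{7}) into the characterization of Lemma \ref{second} and simplify the two terms $g(\mathcal{A}X,\phi Y)\xi$ and $\eta(Y)\phi\mathcal{A}X$ using (\ref{1}), $\eta\circ\phi=0$ and $\eta\circ h=0$. Your computations check out, and the equivalence is indeed immediate since the substitution is reversible.
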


\begin{proof}
If we use the Lemma \ref{second} and the identity (\ref{7}), we have%
\begin{eqnarray*}
(\nabla _{X}\phi )Y &=&-g(\alpha \phi ^{2}X+\phi hX,\phi Y)\xi -\eta (Y)\phi
(\alpha \phi ^{2}X+\phi hX) \\
&=&-\alpha g(\phi ^{2}X,\phi Y)\xi -g(\phi hX,\phi Y)\xi -\alpha \eta
(Y)\phi X-\eta (Y)hX).
\end{eqnarray*}%
By the help of (\ref{1}) we get the requested equation.
\end{proof}

As a direct consequence we have the following

\begin{theorem}
Let $M^{2n+1}$ be an almost $\alpha $-para-Kenmotsu manifold with para
Kaehler leaves. Then $M^{2n+1}$ is a para-Kenmotsu ($\alpha =1$) manifold if
and only $\mathcal{A}=-\phi ^{2}.$
\end{theorem}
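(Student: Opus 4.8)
The plan is to show that, under the standing hypothesis that $M$ is almost $\alpha$-para-Kenmotsu with para-Kaehler leaves, each of the conditions ``$M$ is para-Kenmotsu'' and ``$\mathcal{A}=-\phi^{2}$'' is equivalent to the pair $\alpha=1$ and $h=0$; the claimed equivalence is then immediate.

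First I would dispose of the condition $\mathcal{A}=-\phi^{2}$ using only identities valid on every almost $\alpha$-paracosymplectic manifold. By (\ref{7}) we have $\mathcal{A}=-\nabla\xi=-\alpha\phi^{2}-\phi\circ h$, so $\mathcal{A}=-\phi^{2}$ is equivalent to $\phi\circ h=(1-\alpha)\phi^{2}$. Composing this with $\phi$ on the left and using $\phi^{3}=\phi$ together with $\phi^{2}\circ h=h$ (the latter since $\eta\circ h=0$), I get $h=(1-\alpha)\phi$; conversely this last relation gives back $\phi\circ h=(1-\alpha)\phi^{2}$ and hence $\mathcal{A}=-\phi^{2}$. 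Now substitute $h=(1-\alpha)\phi$ into the anticommutation relation $h\circ\phi+\phi\circ h=0$ of (\ref{6.2}): this yields $2(1-\alpha)\phi^{2}=0$, and evaluating on any nonzero vector of $D=\ker\eta$ (where $\phi^{2}=\mathrm{Id}$) forces $\alpha=1$, whence $h=0$. The reverse direction is trivial: $\alpha=1$ and $h=0$ give $\mathcal{A}=-\phi^{2}$. Thus $\mathcal{A}=-\phi^{2}\iff(\alpha=1$ and $h=0)$, and note this step has not yet used the para-Kaehler leaves hypothesis.

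Next I would invoke the para-Kaehler leaves through Proposition \ref{alpha}, i.e.\ the formula (\ref{MEHMET}). Substituting $\alpha=1$ and $h=0$ into (\ref{MEHMET}) gives $(\nabla_{X}\phi)Y=g(\phi X,Y)\xi-\eta(Y)\phi X$, which is exactly the defining covariant-derivative identity of a para-Kenmotsu manifold; in particular one verifies that this identity makes $N^{(1)}$ vanish by inserting it into the Nijenhuis-torsion formula of Section \ref{preliminaries} and using $d\eta=0$, so $M$ is normal with $\alpha=1$. Conversely, if $M$ is para-Kenmotsu then $\alpha=1$ and $(\nabla_{X}\phi)Y=g(\phi X,Y)\xi-\eta(Y)\phi X$; comparing with (\ref{MEHMET}) at $\alpha=1$ leaves $g(hX,Y)\xi-\eta(Y)hX=0$ for all $X,Y$, and taking $Y\in D$ together with $hX\in D$ and the nondegeneracy of $g$ on $D$ forces $h=0$. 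Hence ``$M$ para-Kenmotsu'' $\iff(\alpha=1$ and $h=0)$, which with the previous paragraph finishes the proof.

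The calculations are short, and the only step deserving comment is the assertion that $(\nabla_{X}\phi)Y=g(\phi X,Y)\xi-\eta(Y)\phi X$ characterises para-Kenmotsu manifolds (equivalently, that it entails normality); if one takes this identity as the definition of para-Kenmotsu the argument is purely algebraic, built from (\ref{7}), (\ref{6.2}), (\ref{MEHMET}) and the structural relations $\phi^{2}=\mathrm{Id}-\eta\otimes\xi$ and $\eta\circ h=0$. I expect this normality bookkeeping to be the only mild obstacle; everything else is routine manipulation of $\phi$, $h$ and $\mathcal{A}$.
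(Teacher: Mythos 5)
Your argument is correct and is essentially the proof the paper has in mind: the authors state this theorem without proof as ``a direct consequence'' of Proposition \ref{alpha}, and your two reductions (via (\ref{7}), (\ref{6.2}) and (\ref{MEHMET})) to the pair $\alpha=1$, $h=0$ are exactly the computation that makes it direct. The one step you flag --- that a para-Kenmotsu (i.e.\ normal) manifold satisfies $(\nabla_{X}\phi)Y=g(\phi X,Y)\xi-\eta(Y)\phi X$ --- can be bypassed entirely: from the Nijenhuis formula one gets $N^{(1)}(X,\xi)=[\phi,\phi](X,\xi)=2\phi hX$ (using $d\eta=0$ and $\phi\xi=0$), so normality alone forces $\phi h=0$ and hence $h=\phi^{2}h=0$; combined with $\alpha=1$ from the definition this gives $\mathcal{A}=-\alpha\phi^{2}-\phi h=-\phi^{2}$ without invoking the covariant-derivative characterisation of para-Kenmotsu manifolds. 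With that substitution your proof is complete and purely algebraic.
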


\begin{remark}
For a similar notion in contact metric geometry see e.g. \cite{OlK}, \cite%
{DilPast2}, \cite{HAKAN} and there are many other papers where this notion
appears explicitly or implicitly. Compare the references in \cite%
{CapMonNicYud}. We also note that in almost contact metric geometry there is
more general idea of when a manifold additionally carries a so-called
CR-structure. All almost contact metric manifolds with \emph{Kaehler} leaves
are also Levi-flat CR-manifolds.
\end{remark}

\section{Basic Structure and Curvature Identies}

\begin{lemma}
\label{2-form}For an almost $\alpha $-paracosymplectic manifold $(M,\phi
,\xi ,\eta ,g)$ with its fundamental $2$-form $\Phi $ the following
equations hold%
\begin{eqnarray}
\text{ }(\nabla _{X}\Phi )(Y,Z) &=&g((\nabla _{X}\phi )Y,Z),  \label{i} \\
\text{ }(\nabla _{X}\Phi )(Z,\phi Y)+(\nabla _{X}\Phi )(Y,\phi Z) &=&-\eta
(Y)g(\mathcal{A}X,Z)-\eta (Z)g(\mathcal{A}X,Y),  \label{ii} \\
(\nabla _{X}\Phi )(\phi Y,\phi Z)-(\nabla _{X}\Phi )(Y,Z) &=&\eta (Y)g(%
\mathcal{A}X,\phi Z)-\eta (Z)g(\mathcal{A}X,\phi Y),  \label{iii}
\end{eqnarray}%
where $\mathcal{A}=-\nabla \xi .$
\end{lemma}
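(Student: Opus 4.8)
The plan is to derive all three identities from the single covariant-differentiation formula for $\Phi$ together with the algebraic relations among $\phi$, $\mathcal{A}$, and $h$ already established. First I would prove \eqref{i}. Since $\Phi(Y,Z)=g(\phi Y,Z)$ and $\nabla g=0$, differentiating gives $(\nabla_X\Phi)(Y,Z)=X\,g(\phi Y,Z)-g(\phi\nabla_XY,Z)-g(\phi Y,\nabla_XZ)=g((\nabla_X\phi)Y,Z)$ directly; this is immediate and needs no structural input beyond metric compatibility.

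For \eqref{ii} and \eqref{iii} the strategy is to replace the arguments by $\phi$-images and exploit the compatibility relation \eqref{1}, i.e. $g(\phi Y,\phi Z)=-g(Y,Z)+\eta(Y)\eta(Z)$. The key observation is that $\Phi(Y,\phi Z)=g(\phi Y,\phi Z)=-g(Y,Z)+\eta(Y)\eta(Z)$, so $\Phi(\cdot,\phi\cdot)$ is (up to sign and the $\eta\otimes\eta$ term) just the metric. Differentiating this identity covariantly in the direction $X$ and using $\nabla g=0$ produces, on the left, a sum of two terms of the form $(\nabla_X\Phi)(Y,\phi Z)$ and $\Phi(Y,(\nabla_X\phi)Z)$; on the right, a sum of terms involving $\nabla_X\eta=-g(\mathcal{A}X,\cdot)$ (from Definition \ref{AX} and symmetry of $\mathcal{A}$). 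Then one symmetrizes in $Y$ and $Z$: writing the differentiated identity once as is and once with $Y,Z$ swapped, and adding, the terms $\Phi(Y,(\nabla_X\phi)Z)+\Phi(Z,(\nabla_X\phi)Y)$ combine — using \eqref{i} and skew-symmetry of $\Phi$ — into precisely $-\big[(\nabla_X\Phi)(\phi Z,Y)+(\nabla_X\Phi)(\phi Y,Z)\big]$ or cancel, isolating \eqref{ii}. The right-hand side collapses to $-\eta(Y)g(\mathcal{A}X,Z)-\eta(Z)g(\mathcal{A}X,Y)$ because the $\eta(Y)\eta(Z)$ contributions differentiate to terms that are symmetric and combine with the $(\nabla_X\eta)(Y)\eta(Z)$ pieces via $\phi\xi=0$, $\eta\circ\phi=0$, and $\eta(\mathcal{A}X)=0$ from Proposition \ref{properties1}. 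For \eqref{iii} I would instead start from $\Phi(\phi Y,\phi Z)$: since $\phi^2=Id-\eta\otimes\xi$, we get $\Phi(\phi Y,\phi Z)=g(\phi^2Y,\phi Z)=g(\phi Y,\phi Z)\cdot(\text{via }\phi)\ldots$ — more cleanly, $\Phi(\phi Y,\phi Z)=g(\phi^2 Y,\phi Z)=g(Y-\eta(Y)\xi,\phi Z)=\Phi(Y,Z)-\eta(Y)\Phi(\xi,Z)=\Phi(Y,Z)$ since $\Phi(\xi,\cdot)=0$; wait, that would make the left side of \eqref{iii} zero for $\eta$-horizontal arguments, which is consistent, and the $\eta$-vertical corrections produce exactly the stated right-hand side. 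So the route is: expand $\Phi(\phi Y,\phi Z)-\Phi(Y,Z)$ using $\phi^2=Id-\eta\otimes\xi$, obtaining an expression involving only $\eta(Y)$, $\eta(Z)$, and $\Phi$ evaluated with a $\xi$-argument (hence zero up to the correction), then apply $\nabla_X$ and use \eqref{i} to rewrite $\Phi(\cdot,(\nabla_X\phi)\cdot)$-type terms and $(\nabla_X\eta)=-g(\mathcal{A}X,\cdot)$ together with $(\nabla_X\phi)\xi=\phi\mathcal{A}X$ from Lemma \ref{second}'s proof (or directly $\nabla_X\xi=-\mathcal{A}X$ and $\phi\xi=0$).

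The main obstacle — really the only place requiring care rather than bookkeeping — is tracking the $\eta$-terms correctly: the covariant derivatives of $\eta\otimes\xi$ and $\eta\otimes\eta$ generate several terms, and one must use $\nabla_X\xi=-\mathcal{A}X$, $\eta(\mathcal{A}X)=0$, $\phi\xi=0$, $\eta\circ\phi=0$, and the symmetry $g(\mathcal{A}X,Y)=g(X,\mathcal{A}Y)$ from Proposition \ref{properties1} in the right combinations so that spurious terms cancel and the surviving ones assemble into the compact right-hand sides. Everything else is a routine application of the Leibniz rule to the bilinear identities \eqref{1} and $\phi^2=Id-\eta\otimes\xi$, combined with \eqref{i}. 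I would organize the computation by first recording the two "master" differentiated identities
\[
(\nabla_X\Phi)(Y,\phi Z)+g(\phi Y,(\nabla_X\phi)Z)=-(\nabla_X\eta)(Y)\,\eta(Z)-\eta(Y)\,(\nabla_X\eta)(Z)+\text{(}g\text{-part}=0\text{)},
\]
\[
(\nabla_X\Phi)(\phi Y,\phi Z)+\text{lower-order }\phi\text{-terms}=(\nabla_X\Phi)(Y,Z)+(\text{corrections in }\eta),
\]
and then symmetrize/antisymmetrize as needed to peel off \eqref{ii} and \eqref{iii} respectively.
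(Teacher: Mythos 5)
Your treatment of \eqref{i} is the same as the paper's (immediate from $\nabla g=0$), and your route to \eqref{ii} is sound and essentially equivalent to the paper's: the paper differentiates $\phi^{2}=I-\eta\otimes\xi$ covariantly and pairs the result with $Z$, whereas you differentiate the paired version $\Phi(Y,\phi Z)=g(\phi Y,\phi Z)=-g(Y,Z)+\eta(Y)\eta(Z)$; these are the same computation. In fact your \eqref{ii} argument is simpler than you make it sound: no symmetrization is needed, because the cross-term $\Phi(Y,(\nabla_{X}\phi)Z)=g(\phi Y,(\nabla_{X}\phi)Z)$ is already equal to $(\nabla_{X}\Phi)(Z,\phi Y)$ by \eqref{i} and the symmetry of $g$, so the single differentiated identity $(\nabla_{X}\Phi)(Y,\phi Z)+(\nabla_{X}\Phi)(Z,\phi Y)=(\nabla_{X}\eta)(Y)\eta(Z)+\eta(Y)(\nabla_{X}\eta)(Z)$ \emph{is} \eqref{ii} once you insert $(\nabla_{X}\eta)(\cdot)=-g(\mathcal{A}X,\cdot)$.

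The genuine problem is in your branch for \eqref{iii}: the identity you base it on, $\Phi(\phi Y,\phi Z)=\Phi(Y,Z)$, is false. Since $g(\phi Y,\phi Z)=-g(Y,Z)+\eta(Y)\eta(Z)$ forces $\phi$ to be $g$-skew-adjoint, one has $g(Y,\phi Z)=-g(\phi Y,Z)=-\Phi(Y,Z)$, hence $\Phi(\phi Y,\phi Z)=g(Y,\phi Z)=-\Phi(Y,Z)$. The conclusion you drew from the wrong sign --- that the cross-terms coming from $\nabla\phi$ are negligible and the left side of \eqref{iii} should essentially vanish --- is therefore unfounded; with the correct sign, differentiating $\Phi(\phi Y,\phi Z)+\Phi(Y,Z)=0$ gives $(\nabla_{X}\Phi)(\phi Y,\phi Z)+\Phi((\nabla_{X}\phi)Y,\phi Z)+\Phi(\phi Y,(\nabla_{X}\phi)Z)+(\nabla_{X}\Phi)(Y,Z)=0$, and each cross-term contributes a full copy of $-(\nabla_{X}\Phi)(Y,Z)$ plus an $\eta(\cdot)\,g(\mathcal{A}X,\phi\,\cdot)$ correction (using $\eta((\nabla_{X}\phi)Y)=g(\mathcal{A}X,\phi Y)$), so the cross-terms carry all of the content of \eqref{iii} rather than dropping out. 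Carried out with the right signs your method does land on \eqref{iii}, but the cleanest repair is the paper's: obtain \eqref{iii} directly from \eqref{ii} by replacing $Z$ with $\phi Z$, then use $\eta\circ\phi=0$, $\phi^{2}=I-\eta\otimes\xi$, the antisymmetry of $\nabla_{X}\Phi$, and $(\nabla_{X}\Phi)(Y,\xi)=g(\mathcal{A}X,\phi Y)$.
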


\begin{proof}
The proof of (\ref{i}) is obvious. Differentiating the identity $\phi
^{2}=I-\eta \otimes \xi $ covariantly, we obtain%
\begin{equation}
(\nabla _{X}\phi )\phi Y+\phi (\nabla _{X}\phi )Y=g(Y,\mathcal{A}X)\xi +\eta
(Y)\mathcal{A}X.  \label{iv}
\end{equation}%
If we take the inner product with $Z,$ we obtain (\ref{ii})$.$ Replacing $Z$
by $\phi Z$ in (\ref{ii}), using the anti-symmetry of $\Phi $ and (\ref{i}),
we get (\ref{iii})$.$
\end{proof}

\begin{proposition}
\label{B(X,Y,Z)}For any almost $\alpha $-paracosymplectic manifold, we have%
\begin{equation}
(\nabla _{\phi X}\phi )\phi Y-(\nabla _{X}\phi )Y-\eta (Y)\mathcal{A}\phi
X-2\alpha (g(X,\phi Y)\xi +\eta (Y)\phi X)=0.  \label{B}
\end{equation}
\end{proposition}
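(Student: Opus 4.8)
The plan is to reduce everything to the first-order identity obtained by Ohashi-type manipulation of $d\Phi=2\alpha\,\eta\wedge\Phi$ together with $d\eta=0$, and then to feed in the relation $\mathcal{A}=-\alpha\phi^{2}-\phi h$ from Proposition~\ref{h}. First I would write out the standard formula expressing the covariant derivative of a $2$-form in terms of its exterior derivative and its Lie derivative along suitable directions; concretely, for any almost paracontact metric structure one has the purely algebraic-plus-$\nabla\phi$ identity
\begin{equation*}
2g((\nabla_{X}\phi)Y,Z)=d\Phi(X,Y,Z)-d\Phi(X,\phi Y,\phi Z)+g(N^{(1)}(Y,Z),\phi X)+\text{(terms in }d\eta\text{)},
\end{equation*}
but since here $d\eta=0$ and we are \emph{not} assuming normality, it is cleaner to avoid $N^{(1)}$ and instead proceed as in Lemma~\ref{2-form}: I already have (\ref{ii}) and (\ref{iii}) there, which encode $\nabla\Phi$ against $\phi$-twisted arguments, and I have the defining relation $d\Phi=2\alpha\,\eta\wedge\Phi$, i.e.
\begin{equation*}
(\nabla_{X}\Phi)(Y,Z)+(\nabla_{Y}\Phi)(Z,X)+(\nabla_{Z}\Phi)(X,Y)=2\alpha\bigl(\eta(X)\Phi(Y,Z)+\eta(Y)\Phi(Z,X)+\eta(Z)\Phi(X,Y)\bigr).
\end{equation*}

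Next I would take this cyclic $d\Phi$ identity and replace $X$ by $\phi X$ and $Y$ by $\phi Y$ (keeping $Z$), then add the resulting equation to a suitable copy of (\ref{iii}) of Lemma~\ref{2-form} so that the "mixed" terms $(\nabla_{\phi X}\Phi)(\phi Y,Z)$ and $(\nabla_{X}\Phi)(Y,Z)$ line up. Using $(\nabla_{X}\Phi)(Y,Z)=g((\nabla_{X}\phi)Y,Z)$ and the anti-symmetry/symmetry properties of $\Phi$, $\mathcal{A}$, $h$, this should collapse to an identity of the shape
\begin{equation*}
g\bigl((\nabla_{\phi X}\phi)\phi Y-(\nabla_{X}\phi)Y,\,Z\bigr)=\eta(Y)\,g(\mathcal{A}\phi X,Z)+2\alpha\bigl(g(X,\phi Y)\eta(Z)+\eta(Y)g(\phi X,Z)\bigr),
\end{equation*}
at which point, since $Z$ is arbitrary, I can strip it off to get (\ref{B}). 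The key inputs are that $\eta$ is closed (so the $d\eta$ contributions vanish and $\mathcal{A}$ is symmetric, by Proposition~\ref{properties1}), that $\nabla_{\xi}\phi=0$ and $\mathcal{A}\xi=0$ (so the $\xi$-components behave), and the trace-free/anticommutation facts about $h$ and $\phi$.

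The step I expect to be the main obstacle is the bookkeeping of exactly which twist of arguments makes the unwanted second-derivative-of-$\phi$ terms cancel: one has to choose the substitution pattern and the linear combination so that every term of the form $(\nabla_{(\cdot)}\phi)(\cdot)$ other than the two in (\ref{B}) either cancels or is rewritten, via (\ref{iv}), as an $\mathcal{A}$-term of the form $g(Y,\mathcal{A}X)\xi+\eta(Y)\mathcal{A}X$. In particular I will need to be careful handling $(\nabla_{X}\phi)\phi Y$ versus $\phi(\nabla_{X}\phi)Y$ — these differ by exactly (\ref{iv}) — and to correctly track the $\alpha$-coefficients coming from the three cyclic terms on the right of the $d\Phi$ identity, since only one of them survives after the $\phi$-twist (the other two carry $\eta(\phi X)=\eta(\phi Y)=0$). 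Once the arbitrary-vector-field check on $\xi$ (reducing to $\nabla_{\xi}\phi=0$ and $(\nabla_{X}\phi)\xi=\phi\mathcal{A}X$, both already established) is added, the identity (\ref{B}) follows. An alternative, perhaps shorter, route is to plug the para-Kaehler-leaf formula-type expression for $(\nabla_{X}\phi)Y$ directly — but that only applies under the para-Kaehler-leaves hypothesis, which (\ref{B}) does not assume, so I would keep to the $d\Phi$-based derivation above.
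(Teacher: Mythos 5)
Your setup is right --- the paper also works entirely from the cyclic identity $3d\Phi(X,Y,Z)=2\alpha\,\sigma\,\eta(X)\Phi(Y,Z)$ together with (\ref{ii}) and (\ref{iii}) of Lemma \ref{2-form} --- but the step where you claim the combination ``should collapse'' to
\begin{equation*}
g\bigl((\nabla_{\phi X}\phi)\phi Y-(\nabla_{X}\phi)Y,\,Z\bigr)=\eta(Y)\,g(\mathcal{A}\phi X,Z)+2\alpha\bigl(g(X,\phi Y)\eta(Z)+\eta(Y)g(\phi X,Z)\bigr)
\end{equation*}
is exactly where the argument fails as described. The substitution $X\mapsto\phi X$, $Y\mapsto\phi Y$ in the cyclic identity necessarily produces \emph{three} terms, $(\nabla_{\phi X}\Phi)(\phi Y,Z)+(\nabla_{\phi Y}\Phi)(Z,\phi X)+(\nabla_{Z}\Phi)(\phi X,\phi Y)$; after you use (\ref{iii}) to convert the last one and the untwisted cyclic identity to eliminate $(\nabla_Z\Phi)(X,Y)$, the unwanted term $(\nabla_{\phi Y}\Phi)(\phi X,Z)$ does not cancel --- it survives paired with $(\nabla_Y\Phi)(X,Z)$. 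What you actually obtain is $\mathcal{B}(X,Y,Z)-\mathcal{B}(Y,X,Z)=0$, where $\mathcal{B}(X,Y,Z)$ denotes the left-hand side of (\ref{B}) contracted with $Z$; i.e.\ only the statement that $\mathcal{B}$ is \emph{symmetric} in $X,Y$. This is unavoidable: (\ref{ii}) and (\ref{iii}) never change the direction of covariant differentiation, and the only identity that does (the cyclic one) always couples three directions, so no linear combination of these ingredients can isolate $\mathcal{B}(X,Y,Z)$ itself --- only its symmetrizations and antisymmetrizations.

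The missing idea is the paper's key step: one also shows, using (\ref{ii}) applied at $\phi X$ together with the antisymmetry of $\Phi$, that $\mathcal{B}(X,Y,Z)+\mathcal{B}(X,Z,Y)=0$, and then invokes the purely algebraic fact that a $(0,3)$-tensor symmetric in its first two arguments and antisymmetric in its last two must vanish (the usual six-fold shuffle $\mathcal{B}(X,Y,Z)=\mathcal{B}(Y,X,Z)=-\mathcal{B}(Y,Z,X)=\dots=-\mathcal{B}(X,Y,Z)$). Without this second symmetry and the concluding lemma, your computation stops at the symmetry statement and does not yield (\ref{B}). Your peripheral remarks are sound --- avoiding the $N^{(1)}$-formula is wise since normality is not assumed, the $\xi$-directions are indeed handled by $\nabla_\xi\phi=0$ and $(\nabla_X\phi)\xi=\phi\mathcal{A}X$, and the observation that only the $\eta(Z)$-term survives the $\phi$-twist on the right-hand side is correct --- but the proof as proposed has a genuine gap at its central step.
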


\begin{proof}
Let us define $(0,3)$-tensor field $\mathcal{B}$ as follows%
\begin{equation*}
\mathcal{B}(X,Y,Z)=g((\nabla _{\phi X}\phi )\phi Y,Z)-g((\nabla _{X}\phi
)Y,Z)-\eta (Y)g(\mathcal{A}\phi X,Z)-2\alpha (g(X,\phi Y)\eta (Z)+\eta
(Y)g(\phi X,Z)).
\end{equation*}%
Antisymmetrizing $\mathcal{B}$ with respect to $X,Y$ \ we have%
\begin{eqnarray}
\mathcal{B}(X,Y,Z)-\mathcal{B}(Y,X,Z) &=&(\nabla _{\phi X}\Phi )(\phi
Y,Z)-(\nabla _{\phi Y}\Phi )(\phi X,Z)  \notag \\
&&-(\nabla _{X}\Phi )(Y,Z)+(\nabla _{Y}\Phi )(X,Z)  \notag \\
&&-\eta (Y)g(\mathcal{A}\phi X,Z)+\eta (X)g(\mathcal{A}\phi Y,Z)  \label{X-Y}
\\
&&-2\alpha ((g(X,\phi Y)-g(Y,\phi X))\eta (Z)  \notag \\
&&+\eta (Y)g(\phi X,Z)-\eta (X)g(\phi Y,Z)).  \notag
\end{eqnarray}%
Recalling the well known formula 
\begin{eqnarray*}
3d\Phi (X,Y,Z) &=&(\nabla _{X}\Phi )(Y,Z)+(\nabla _{Z}\Phi )(X,Y)+(\nabla
_{Y}\Phi )(Z,X) \\
&=&2\alpha (\eta (X)\Phi (Y,Z)+\eta (Z)\Phi (X,Y)+\eta (Y)\Phi (Z,X)).
\end{eqnarray*}%
and applying this in (\ref{X-Y}), we obtain%
\begin{eqnarray*}
\mathcal{B}(X,Y,Z)-\mathcal{B}(Y,X,Z) &=&-(\nabla _{Z}\Phi )(\phi X,\phi
Y)+(\nabla _{Z}\Phi )(X,Y) \\
&&-\eta (Y)g(\mathcal{A}\phi X,Z)+\eta (X)g(\mathcal{A}\phi Y,Z).
\end{eqnarray*}%
By (\ref{iii}), the right hand side of this equality vanishes identically,
so that $\mathcal{B}(X,Y,Z)-\mathcal{B}(Y,X,Z)=0,$ i.e. $\mathcal{B}$ is
symmetric with respect to $X,Y.$

Symmetrizing $\mathcal{B}$ with respect to $Y,Z,$ we find%
\begin{eqnarray*}
\mathcal{B}(X,Y,Z)+\mathcal{B}(X,Z,Y) &=&(\nabla _{\phi X}\Phi )(\phi
Y,Z)+(\nabla _{\phi X}\Phi )(\phi Z,Y) \\
&&-\eta (Y)g(\mathcal{A}\phi X,Z)-\eta (Z)g(\mathcal{A}\phi X,Y).
\end{eqnarray*}%
By the help of (\ref{ii}), we obtain $\mathcal{B}(X,Y,Z)+\mathcal{B}%
(X,Z,Y)=0,$ i.e. $\mathcal{B}$ is antisymmetric with respect to $Y,Z.$ The
tensor $\mathcal{B}$ having such symmetries must vanish identically, which
implies (\ref{B}).
\end{proof}

\begin{lemma}
\label{also have} For an almost $\alpha $-paracosymplectic manifold, we also
have%
\begin{eqnarray}
(\nabla _{\phi X}\phi )Y-(\nabla _{X}\phi )\phi Y+\eta (Y)\mathcal{A}%
X-2\alpha (g(X,Y)\xi -\eta (Y)X) &=&0,  \label{also1} \\
(\nabla _{\phi X}\phi )Y+\phi (\nabla _{X}\phi )Y-g(\mathcal{A}X,Y)\xi
-2\alpha (g(X,Y)\xi -\eta (Y)X) &=&0.  \label{also2}
\end{eqnarray}
\end{lemma}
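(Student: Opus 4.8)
The plan is to obtain both identities by elementary manipulation of equation~(\ref{B}) together with (\ref{iv}) and the pointwise relations between $\mathcal{A}$ and $\phi$ already established in Propositions~\ref{properties1} and~\ref{properties2}; no further geometric input is required.

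\emph{Proof of (\ref{also1}).} Replace $Y$ by $\phi Y$ in (\ref{B}). Since $\phi^{2}Y=Y-\eta(Y)\xi$, $\eta(\phi Y)=0$, and $(\nabla_{Z}\phi)\xi=\phi\mathcal{A}Z$ (immediate from $\phi\xi=0$ and $\nabla_{Z}\xi=-\mathcal{A}Z$), the term $(\nabla_{\phi X}\phi)\phi(\phi Y)$ becomes $(\nabla_{\phi X}\phi)Y-\eta(Y)\phi\mathcal{A}\phi X$, the terms containing $\eta(\phi Y)$ drop out, and $2\alpha\,g(X,\phi(\phi Y))\xi$ becomes $2\alpha\big(g(X,Y)-\eta(X)\eta(Y)\big)\xi$. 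Thus (\ref{B}) turns into
\[
(\nabla_{\phi X}\phi)Y-(\nabla_{X}\phi)\phi Y-\eta(Y)\phi\mathcal{A}\phi X-2\alpha\,g(X,Y)\xi+2\alpha\,\eta(X)\eta(Y)\xi=0 .
\]
Comparing with the desired (\ref{also1}), the difference of the two left-hand sides equals $\eta(Y)\big(\mathcal{A}X+2\alpha X+\phi\mathcal{A}\phi X-2\alpha\eta(X)\xi\big)$, so it suffices to verify $\phi\mathcal{A}\phi X=-\mathcal{A}X-2\alpha\phi^{2}X$. Applying $\phi$ on the left of $\mathcal{A}\phi+\phi\mathcal{A}=-2\alpha\phi$ from Proposition~\ref{properties2} gives $\phi\mathcal{A}\phi=-2\alpha\phi^{2}-\phi^{2}\mathcal{A}$, and since $\phi^{2}\mathcal{A}X=\mathcal{A}X-\eta(\mathcal{A}X)\xi=\mathcal{A}X$ by part $vi)$ of Proposition~\ref{properties1}, the claim follows.

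\emph{Proof of (\ref{also2}).} I would eliminate $(\nabla_{X}\phi)\phi Y$ using (\ref{iv}), which yields $\phi(\nabla_{X}\phi)Y=g(\mathcal{A}X,Y)\xi+\eta(Y)\mathcal{A}X-(\nabla_{X}\phi)\phi Y$. Substituting this into the left-hand side of (\ref{also2}), the $g(\mathcal{A}X,Y)\xi$ terms cancel and what remains is exactly the left-hand side of (\ref{also1}), which is already zero.

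The only delicate point is the substitution $Y\mapsto\phi Y$ in (\ref{B}) and the bookkeeping of the terms homogeneous of degree one in $\eta(Y)$; once the relation $\phi\mathcal{A}\phi=-\mathcal{A}-2\alpha\phi^{2}$ is in hand (which in turn only needs $\eta\circ\mathcal{A}=0$), everything cancels and (\ref{also2}) is immediate. I do not expect a genuine obstacle here — this is a bounded computation rather than a conceptual step.
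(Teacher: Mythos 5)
Your proof is correct and follows essentially the same route as the paper: substitute $Y\mapsto\phi Y$ in (\ref{B}), use $(\nabla_{\phi X}\phi)\xi=\phi\mathcal{A}\phi X=-\mathcal{A}X-2\alpha\phi^{2}X$ (derived from $\mathcal{A}\phi+\phi\mathcal{A}=-2\alpha\phi$ and $\eta\circ\mathcal{A}=0$) to obtain (\ref{also1}), and then combine with (\ref{iv}) to get (\ref{also2}). All the bookkeeping checks out.
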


\begin{proof}
Putting $\phi Y$ instead of $Y$ in (\ref{B}), we obtain%
\begin{equation}
(\nabla _{\phi X}\phi )Y-\eta (Y)(\nabla _{\phi X}\phi )\xi -(\nabla
_{X}\phi )\phi Y-2\alpha (g(X,Y)-\eta (Y)\eta (X)\xi )=0.  \label{also3}
\end{equation}%
Using (\ref{7}) and $(\nabla _{\phi X}\phi )\xi =\phi \mathcal{A}\phi X=-%
\mathcal{A}X-2\alpha\phi^2X$ in (\ref{also3}), we get (\ref{also1}).
Equation (\ref{also2}) comes from (\ref{iv}) and (\ref{also1}).
\end{proof}

Using (\ref{also2}), one can easily get following

\begin{proposition}
\label{BEN}For any almost $\alpha $-paracosymplectic manifold, we have%
\begin{equation}
\phi (\nabla _{\phi X}\phi )Y+(\nabla _{X}\phi )Y=-2\alpha \eta (Y)\phi
X+g(\alpha \phi X+hX,Y)\xi .  \label{original}
\end{equation}
\end{proposition}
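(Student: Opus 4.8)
The plan is to obtain (\ref{original}) as an immediate algebraic consequence of equation (\ref{also2}) of Lemma \ref{also have}, which is the route indicated by the sentence preceding the statement. Rewrite (\ref{also2}) in the form
\[
(\nabla_{\phi X}\phi)Y + \phi(\nabla_X\phi)Y = g(\mathcal{A}X,Y)\xi + 2\alpha\bigl(g(X,Y)\xi - \eta(Y)X\bigr),
\]
and apply the affinor $\phi$ to both sides. On the right-hand side every summand containing a factor $\xi$ is killed since $\phi\xi = 0$, leaving only $-2\alpha\eta(Y)\phi X$. On the left-hand side the first term becomes the wanted $\phi(\nabla_{\phi X}\phi)Y$, and for the second term the defining relation $\phi^{2}=Id-\eta\otimes\xi$ gives $\phi^{2}(\nabla_X\phi)Y = (\nabla_X\phi)Y - \eta\bigl((\nabla_X\phi)Y\bigr)\xi$. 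Hence
\[
\phi(\nabla_{\phi X}\phi)Y + (\nabla_X\phi)Y = -2\alpha\eta(Y)\phi X + \eta\bigl((\nabla_X\phi)Y\bigr)\xi,
\]
so the proposition is reduced to the scalar identity $\eta\bigl((\nabla_X\phi)Y\bigr) = g(\alpha\phi X + hX, Y)$.

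To prove this last identity I would expand $(\nabla_X\phi)Y = \nabla_X(\phi Y) - \phi(\nabla_X Y)$ and take the inner product with $\xi$. Since $\eta\circ\phi = 0$, the term $g(\phi(\nabla_X Y),\xi)$ vanishes, and because $g(\phi Y,\xi) = \eta(\phi Y) = 0$ one gets $g(\nabla_X(\phi Y),\xi) = -g(\phi Y,\nabla_X\xi)$. Now substitute $\nabla_X\xi = \alpha\phi^{2}X + \phi hX$ from (\ref{7}), so that $\eta\bigl((\nabla_X\phi)Y\bigr) = -\alpha\,g(\phi Y,\phi^{2}X) - g(\phi Y,\phi hX)$. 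Applying the compatibility relation (\ref{1}) together with $\eta(\phi X)=0$ and $\eta(hX) = g(hX,\xi) = g(X,h\xi) = 0$ (by symmetry of $h$ and $h\xi=0$) gives $g(\phi Y,\phi^{2}X) = -g(Y,\phi X)$ and $g(\phi Y,\phi hX) = -g(Y,hX)$, whence $\eta\bigl((\nabla_X\phi)Y\bigr) = \alpha\,g(\phi X,Y) + g(hX,Y)$, which is exactly what is needed.

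There is no genuine obstacle here: the argument is a short chain of substitutions, and the only point that requires attention is the bookkeeping of signs, since in the paracontact setting the compatibility relation (\ref{1}) carries the minus sign $g(\phi X,\phi Y) = -g(X,Y) + \eta(X)\eta(Y)$, which differs from the almost contact metric case, and the formula $\nabla_X\xi = \alpha\phi^{2}X + \phi hX$ must be used with the sign recorded in (\ref{7}). No appeal to $\nabla_\xi\phi = 0$ or to the para-Kaehler leaves hypothesis is needed, so the identity holds on every almost $\alpha$-paracosymplectic manifold.
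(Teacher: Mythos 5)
Your proof is correct and follows exactly the route the paper indicates (the paper itself only remarks that the proposition follows from (\ref{also2}) and omits the details): you apply $\phi$ to (\ref{also2}), use $\phi^{2}=Id-\eta\otimes\xi$ and $\phi\xi=0$, and then correctly evaluate $\eta\bigl((\nabla_{X}\phi)Y\bigr)=\alpha g(\phi X,Y)+g(hX,Y)$ via (\ref{7}), (\ref{1}), $\eta\circ\phi=0$ and $\eta\circ h=0$. All signs check out, so nothing further is needed.
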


\begin{theorem}
\label{R0}Let ($M^{2n+1},\phi ,\xi ,\eta ,g)$ be an almost $\alpha $%
-paracosymplectic manifold. Then, for any $X,Y\in \chi (M^{2n+1}),$%
\begin{equation}
\begin{array}{rcl}
R(X,Y)\xi & = & d\alpha(X)(Y-\eta(Y)\xi)-d\alpha(Y)(X-\eta(X)\xi)+ \alpha
\eta (X)(\alpha Y+\phi hY) \\[+4pt] 
&  & - \,\alpha \eta (Y)(\alpha X+\phi hX)+(\nabla _{X}\phi h)Y-(\nabla
_{Y}\phi h)X. \label{b30}%
\end{array}%
\end{equation}
\end{theorem}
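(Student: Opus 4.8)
The plan is to compute $R(X,Y)\xi = \nabla_X\nabla_Y\xi - \nabla_Y\nabla_X\xi - \nabla_{[X,Y]}\xi$ directly from the formula $\nabla\xi = -\mathcal{A} = \alpha\phi^2 + \phi\circ h$ established in Proposition \ref{h}. So I would start by writing $\nabla_Y\xi = \alpha(Y-\eta(Y)\xi) + \phi hY$ and then differentiate covariantly in the direction $X$. This produces terms of three types: (a) derivatives of the function $\alpha$, which will supply the $d\alpha(X)(Y-\eta(Y)\xi)$ pieces; (b) derivatives hitting $\phi^2 = Id - \eta\otimes\xi$, which via $\nabla_X\eta = -g(\mathcal{A}X,\cdot) = g(\nabla_X\xi,\cdot)$ and $\nabla_X\xi$ itself produce the $\alpha\eta(X)(\alpha Y + \phi hY)$-type contributions; and (c) the term $\nabla_X(\phi h)Y = (\nabla_X\phi h)Y + \phi h(\nabla_X Y)$. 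After taking the antisymmetric combination in $X,Y$ and subtracting $\nabla_{[X,Y]}\xi$, the "connection-coefficient" pieces $\phi h(\nabla_X Y) - \phi h(\nabla_Y X) - \phi h([X,Y])$ cancel, and likewise for the $\alpha$-times-identity part, leaving precisely $(\nabla_X\phi h)Y - (\nabla_Y\phi h)X$ plus the algebraic terms.

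The bookkeeping I expect to be slightly delicate is the $\eta$-dependent part. Differentiating $\alpha\phi^2 Y = \alpha Y - \alpha\eta(Y)\xi$ in the direction $X$ gives $X(\alpha)\phi^2 Y + \alpha\phi^2(\nabla_X Y) - X(\alpha)\eta(Y)\xi - \alpha(\nabla_X\eta)(Y)\xi - \alpha\eta(Y)\nabla_X\xi$. Using $(\nabla_X\eta)(Y) = g(\nabla_X\xi, Y) = g(\alpha\phi^2 X + \phi hX, Y) = \alpha g(\phi^2 X,Y) + g(\phi hX,Y)$, and then antisymmetrizing in $X,Y$: the symmetric term $\alpha g(\phi^2 X,Y)$ drops out, $g(\phi hX,Y)$ is symmetric by (\ref{6.1})–(\ref{6.2}) (indeed $g(\phi hX,Y) = g(hX,\phi Y)$, and $\phi\circ h = -h\circ\phi$, $h$ self-adjoint, so $g(\phi hX,Y) = -g(h\phi X,Y) = -g(\phi X,hY) = g(X,\phi hY)$ — symmetric) so it drops too, while the $\alpha\eta(Y)\nabla_X\xi = \alpha\eta(Y)(\alpha X + \phi hX)$ term survives with the stated sign after antisymmetrization. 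The $X(\alpha)$ terms assemble into $d\alpha(X)(Y - \eta(Y)\xi)$. This confirms every term on the right-hand side of (\ref{b30}).

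The one genuine subtlety is justifying that $(\nabla_X\phi h)Y$ is the correct residual term rather than something involving $(\nabla_X\phi)hY$ separately; here I would simply note $\nabla_X(\phi\circ h) = (\nabla_X\phi)\circ h + \phi\circ(\nabla_X h)$ is a tensorial identity and that the notation $(\nabla_X\phi h)Y$ in the statement means exactly $(\nabla_X(\phi h))Y$, so no further unpacking is needed at this stage. Thus the proof is essentially a careful expansion of the second covariant derivative of $\xi$ together with the curvature definition; no deep input beyond Proposition \ref{h} and the self-adjointness/anticommutation relations for $h$ and $\phi$ is required. I do not expect a real obstacle — the main risk is a sign error in the $\alpha\eta(X)$ versus $\alpha\eta(Y)$ terms, which the antisymmetrization step pins down unambiguously.
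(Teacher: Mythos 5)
Your proposal is correct and follows essentially the same route as the paper: both compute $R(X,Y)\xi$ via the Ricci identity for the second covariant derivative of $\xi$, substituting $\nabla\xi=\alpha\phi^{2}+\phi\circ h$ from (\ref{7}) and observing that the symmetric pieces (coming from $(\nabla_{X}\eta)(Y)=g(\nabla_X\xi,Y)$ and $\alpha^2\eta(X)\eta(Y)\xi$) cancel under antisymmetrization. The only blemish is the parenthetical $g(\phi hX,Y)=g(hX,\phi Y)$, which has the wrong sign since $\Phi$ is skew; your subsequent chain $g(\phi hX,Y)=-g(h\phi X,Y)=-g(\phi X,hY)=g(X,\phi hY)$ is the correct justification of the symmetry you need, so the argument stands.
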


\begin{proof}
We have the Ricci identity for the alteration the second covariant
derivative $\nabla_{X,Y}\xi-\nabla_{Y,X}\xi=R(X,Y)\xi$. We notice that $%
\nabla_{X,Y}\xi=-(\nabla_X\mathcal{A})Y$. Now if we substitute $\mathcal{A}$
according to (\ref{7}) and applying the covariant derivative to the all
summands in the result we obtain 
\begin{equation}
\nabla_{X,Y}\xi = d\alpha(X)(Y-\eta(Y)\xi)+\alpha\eta(X)(\alpha Y+\phi hY)
+(\nabla_X\phi h)Y.
\end{equation}
\end{proof}

The identity for the curvature $R(X,Y)\xi$ greatly simplifies if $\text{dim}%
(M) \geqslant 5$ according to the Proposition \textbf{\ref{properties1}}($%
vii)$.

\begin{corollary}
For an almost $\alpha$-paracosymplectic manifold $M^{2n+1}$, $n\geqslant 2$

\begin{equation}  \label{rxyxi2}
\begin{array}{rcl}
R(X,Y)\xi & = & (f+\alpha^2)(\eta(X)Y-\eta(Y)X)+ \alpha(\eta(X)\phi
hY-\eta(Y)\phi hX) \\[+4pt] 
&  & +\,(\nabla_X\phi h)Y-(\nabla_Y\phi h)X,%
\end{array}%
\end{equation}
where $f = i_\xi d\alpha$.
\end{corollary}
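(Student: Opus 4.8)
The plan is to specialize Theorem \ref{R0} using Proposition \ref{properties1}$(vii)$, which asserts $d\alpha = f\eta$ with $f = i_\xi d\alpha$ whenever $n \geqslant 2$. So the main task is simply to substitute this relation into formula (\ref{b30}) and collect terms. First I would evaluate the two leading terms of (\ref{b30}): since $d\alpha(X) = f\eta(X)$ and $d\alpha(Y) = f\eta(Y)$, we get
\[
d\alpha(X)(Y-\eta(Y)\xi) - d\alpha(Y)(X-\eta(X)\xi) = f\bigl(\eta(X)Y - \eta(Y)X\bigr),
\]
because the $\xi$-components cancel: $f\eta(X)\eta(Y)\xi - f\eta(Y)\eta(X)\xi = 0$.

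Next I would handle the two $\alpha$-curvature terms of (\ref{b30}). Expanding,
\[
\alpha\eta(X)(\alpha Y + \phi hY) - \alpha\eta(Y)(\alpha X + \phi hX) = \alpha^2\bigl(\eta(X)Y - \eta(Y)X\bigr) + \alpha\bigl(\eta(X)\phi hY - \eta(Y)\phi hX\bigr).
\]
Adding this to the previously simplified leading terms gives the coefficient $(f+\alpha^2)$ in front of $\eta(X)Y - \eta(Y)X$ and the coefficient $\alpha$ in front of $\eta(X)\phi hY - \eta(Y)\phi hX$, exactly as in (\ref{rxyxi2}). The remaining terms $(\nabla_X\phi h)Y - (\nabla_Y\phi h)X$ are carried over unchanged from (\ref{b30}).

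There is essentially no obstacle here: the corollary is a direct algebraic consequence of the theorem plus the proportionality $d\alpha = f\eta$. The only point requiring the slightest care is checking that the $\xi$-components in the first two terms of (\ref{b30}) cancel after the substitution — which they do, as shown above — so that no spurious $\eta(X)\eta(Y)\xi$ term survives. I would conclude by noting that this simplification is exactly why the higher-dimensional case ($n \geqslant 2$) is qualitatively different from the $3$-dimensional case, where $d\alpha$ need not be a multiple of $\eta$ and the full form (\ref{b30}) must be retained.
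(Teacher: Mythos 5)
Your proposal is correct and is precisely the derivation the paper intends: substitute $d\alpha=f\eta$ from Proposition \ref{properties1}$(vii)$ into (\ref{b30}), observe the cancellation of the $\eta(X)\eta(Y)\xi$ terms, and collect coefficients. The computation checks out and nothing further is needed.
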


\section{Almost $\protect\alpha$-para-Kenmotsu manifolds}

In this section we study particularly almost $\alpha$-para-Kenmotsu
manifolds if it is not otherwise stated.

\begin{theorem}
\label{R}Let ($M^{2n+1},\phi ,\xi ,\eta ,g)$ be an almost $\alpha $%
-para-Kenmotsu manifold. Then, for any $X,Y\in \chi (M^{2n+1}),$%
\begin{equation}
R(X,Y)\xi = \alpha \eta (X)(\alpha Y+\phi hY) - \,\alpha \eta (Y)(\alpha
X+\phi hX)+(\nabla _{X}\phi h)Y-(\nabla _{Y}\phi h)X.  \label{b3}
\end{equation}
\end{theorem}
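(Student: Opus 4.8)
The plan is to obtain this statement as an immediate specialization of Theorem \ref{R0}. By definition an almost $\alpha$-para-Kenmotsu manifold is precisely an almost $\alpha$-paracosymplectic manifold for which $\alpha = const$, so Theorem \ref{R0} applies verbatim. The only thing that changes is that $d\alpha \equiv 0$, hence $d\alpha(X) = d\alpha(Y) = 0$ for all vector fields $X,Y$. Substituting this into formula (\ref{b30}) kills the first two summands $d\alpha(X)(Y-\eta(Y)\xi) - d\alpha(Y)(X-\eta(X)\xi)$, and what remains is exactly (\ref{b3}). So the proof is a single line: quote Theorem \ref{R0} and set $d\alpha = 0$.

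If one instead wants a self-contained argument, I would rerun the short computation in the proof of Theorem \ref{R0}: start from the Ricci identity $\nabla_{X,Y}\xi - \nabla_{Y,X}\xi = R(X,Y)\xi$ together with $\nabla_{X,Y}\xi = -(\nabla_X\mathcal{A})Y$, substitute $\mathcal{A} = -\nabla\xi = -(\alpha\phi^2 + \phi h)$ from (\ref{7}), and differentiate termwise. With $\alpha$ constant the term coming from differentiating $\alpha$ drops out, while $\nabla_X(\alpha\phi^2) = \alpha(\nabla_X\phi^2)$ contributes, via $(\nabla_X\phi^2)Y = g(Y,\mathcal{A}X)\xi + \eta(Y)\mathcal{A}X$ (equation (\ref{iv})) and $\mathcal{A}\xi = 0$, precisely the piece $\alpha\eta(X)(\alpha Y + \phi hY)$ after antisymmetrization. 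The remaining contribution $(\nabla_X\phi h)Y - (\nabla_Y\phi h)X$ is already in final form.

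There is essentially no obstacle here: the result is a corollary of Theorem \ref{R0}, and the only "check" is the bookkeeping that the two $d\alpha$-terms are the only ones sensitive to the constancy of $\alpha$, which is evident from the shape of (\ref{b30}). Hence I would present the proof simply as: "Put $\alpha = const$, so $d\alpha = 0$, in Theorem \ref{R0}."
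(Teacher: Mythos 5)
Your proposal is correct and matches the paper exactly: the paper's own proof of Theorem \ref{R} is the one-line remark that it is a direct consequence of Theorem \ref{R0} when $\alpha$ is constant, i.e. $d\alpha=0$ kills the first two summands of (\ref{b30}). Nothing further is needed.
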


\begin{proof}
It is direct consequence of the Theorem \textbf{\ref{R0}} for $\alpha$ is a
constant.
\end{proof}

\begin{theorem}
\label{R2}Let ($M^{2n+1},\phi ,\xi ,\eta ,g)$ be an almost $\alpha $%
-para-Kenmotsu manifold. Then, for any $X\in \chi (M^{2n+1})$ we have%
\begin{eqnarray}
R(\xi ,X)\xi &=&\alpha ^{2}\phi ^{2}X+2\alpha \phi hX-h^{2}X+\phi (\nabla
_{\xi }h)X,  \label{iremm} \\
(\nabla _{\xi }h)X &=&-\alpha ^{2}\phi X-2\alpha hX+\phi h^{2}X-\phi R(X,\xi
)\xi ,  \label{iremmm} \\
\frac{1}{2}(R(\xi ,X)\xi +\phi R(\xi ,\phi X)\xi ) &=&\alpha ^{2}\phi
^{2}X-h^{2}X.  \label{iremmmm} \\
S(X,\xi ) &=&-2n\alpha ^{2}\eta (X)+g(div(\phi h),X)  \label{SZETA} \\
S(\xi ,\xi ) &=&-2n\alpha ^{2}+trh^{2}.  \label{SZETAZETA}
\end{eqnarray}
\end{theorem}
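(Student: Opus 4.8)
The plan is to derive all five identities from Theorem \ref{R} (the formula for $R(X,Y)\xi$) together with the structure relations from Propositions \ref{properties2} and \ref{h}, specializing and contracting. First I would establish \eqref{iremm}: set $Y=\xi$ in \eqref{b3}, or better, compute $R(\xi,X)\xi$ directly using $\mathcal{A}=-\nabla\xi = \alpha\phi^2 + \phi h$ via the Ricci identity $R(\xi,X)\xi = -(\nabla_\xi\mathcal{A})X + (\nabla_X\mathcal{A})\xi$. The term $(\nabla_X\mathcal{A})\xi = \nabla_X(\mathcal{A}\xi) - \mathcal{A}(\nabla_X\xi) = -\mathcal{A}(-\mathcal{A}X) = \mathcal{A}^2 X$; expanding $\mathcal{A}^2 = (\alpha\phi^2+\phi h)^2$ and using $h\phi + \phi h = 0$, $h\xi = 0$, $\phi^2 = \mathrm{Id}-\eta\otimes\xi$ gives $\mathcal{A}^2 X = \alpha^2\phi^2 X + 2\alpha\phi h X + \phi h\phi h X$, and $\phi h \phi h = -\phi^2 h^2 = -h^2$ on $\mathcal D$ (care with the $\xi$-component, but $h\xi=0$ makes it vanish). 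For the other term, $-(\nabla_\xi\mathcal{A})X = -\nabla_\xi(\alpha\phi^2 X + \phi h X) + \mathcal{A}\nabla_\xi X$; since $\alpha$ is constant, $\nabla_\xi\phi = 0$ (Prop. \ref{properties2}), this collapses to $-\phi(\nabla_\xi h)X$ plus terms that cancel against $\mathcal{A}\nabla_\xi X$ after reorganizing the connection terms. Comparing with $R(\xi,X)\xi = -R(X,\xi)\xi$ yields \eqref{iremm}; equivalently one just substitutes $Y=\xi$ in \eqref{b3} and uses $(\nabla_X\phi h)\xi = \nabla_X(\phi h\xi) - \phi h\nabla_X\xi = \phi h\mathcal{A}X = \phi h(\alpha\phi^2 + \phi h)X$, which again expands to $2\alpha hX - h^2 X$-type terms after applying $h\phi+\phi h=0$ and $\phi^2 h = h$.

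Next, \eqref{iremmm} is just \eqref{iremm} solved for $(\nabla_\xi h)X$: apply $\phi$ to both sides of \eqref{iremm}, use $\phi^2\phi^2 = \phi^2$, $\phi\cdot\phi h = h$, $\phi h^2 = -h\phi h = h^2\phi$ wait — more carefully $\phi h^2 X$; and $\phi(\phi(\nabla_\xi h)X) = (\nabla_\xi h)X$ because $(\nabla_\xi h)X$ has no $\xi$-component (since $\eta\circ h = 0$ and $\nabla_\xi\xi = 0$ give $\eta((\nabla_\xi h)X) = 0$). This is routine linear algebra with $\phi$. For \eqref{iremmmm}, I would replace $X$ by $\phi X$ in \eqref{iremm}, apply $\phi$ to the result, and average with \eqref{iremm}; the terms involving $(\nabla_\xi h)$ must cancel. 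The key input is that $\phi(\nabla_\xi h)\phi X + (\nabla_\xi h)X = 0$, i.e. $\nabla_\xi h$ anticommutes with $\phi$ in the appropriate sense — this follows by differentiating $h\phi + \phi h = 0$ along $\xi$ and using $\nabla_\xi\phi = 0$. Then $\alpha^2\phi^2(\phi X)$ contributes $\alpha^2\phi^2 X$ after applying $\phi$, the $2\alpha\phi h\phi X = -2\alpha\phi^2 h X$ term and its partner from \eqref{iremm} may or may not cancel — I expect the $2\alpha\phi h$ terms to cancel between the two averaged copies, leaving exactly $\alpha^2\phi^2 X - h^2 X$.

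Finally, for the Ricci curvature identities: \eqref{SZETA} comes from contracting \eqref{b3}. Write $S(X,\xi) = \sum_i \epsilon_i g(R(E_i,X)\xi, E_i)$ over a $\phi$-basis $\{E_i\}$ (with signs $\epsilon_i$). The terms $\alpha\eta(E_i)(\alpha X + \phi h X)$ contract against $g(\cdot,E_i)$ to give $\alpha(\alpha\eta(X) + \eta(\phi h X)) = \alpha^2\eta(X)$ (since $\eta\circ\phi = 0$); the term $-\alpha\eta(X)(\alpha E_i + \phi h E_i)$ contracts to $-\alpha\eta(X)(2n\alpha\cdot 1 + \mathrm{tr}(\phi h))$ — but wait, $\sum\epsilon_i g(\alpha E_i, E_i) = \alpha\,\mathrm{tr}(\mathrm{Id})$ on the distribution $= 2n\alpha$, and $\mathrm{tr}(\phi h) = 0$ by Corollary \ref{trace}; and the $(\nabla_{E_i}\phi h)X - (\nabla_X\phi h)E_i$ terms contract to $\mathrm{div}(\phi h)$ paired appropriately — specifically $\sum_i\epsilon_i g((\nabla_{E_i}\phi h)X, E_i)$ and $-\sum_i\epsilon_i g((\nabla_X\phi h)E_i, E_i) = -X(\mathrm{tr}(\phi h)) + \ldots$; since $\mathrm{tr}(\phi h) = 0$ the latter vanishes up to connection terms, leaving $g(\mathrm{div}(\phi h), X)$ after using symmetry of $\phi h$ — actually one must be careful that $\phi h$ is symmetric (it is: $g(\phi h X, Y) = g(hX,\phi Y)$... hmm, $\phi h$ is the product, with $(\phi h)^* = h^*\phi^* $; since $\Phi$ is skew, $\phi^* = -\phi$ w.r.t. $g$, so $(\phi h)^* = -h\phi = \phi h$, symmetric). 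The identity \eqref{SZETAZETA} is \eqref{SZETA} evaluated at $X = \xi$: $g(\mathrm{div}(\phi h),\xi) = \mathrm{div}(\phi h \xi) - \sum\epsilon_i g(\phi h E_i, \nabla_{E_i}\xi) = 0 + \sum\epsilon_i g(\phi h E_i, \mathcal{A}E_i) = \mathrm{tr}(\phi h\mathcal{A}) = \mathrm{tr}(\phi h(\alpha\phi^2 + \phi h)) = \alpha\,\mathrm{tr}(\phi h\phi^2) + \mathrm{tr}(\phi h\phi h)$; the first trace is $\alpha\,\mathrm{tr}(\phi h)$ restricted $= 0$, the second is $\mathrm{tr}(-\phi^2 h^2) = -\mathrm{tr}(h^2) + $ ($\xi$-part vanishes) $= -\mathrm{tr}(h^2)$. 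Wait, that gives $S(\xi,\xi) = -2n\alpha^2 - \mathrm{tr}(h^2)$, but the claim says $+\mathrm{tr}h^2$. So I must recheck a sign: $\phi h\phi h = -\phi\phi h h$? No: $\phi h\phi h = \phi(h\phi)h = \phi(-\phi h)h = -\phi^2 h^2$. And $-\phi^2 h^2 = -(Id - \eta\otimes\xi)h^2 = -h^2$ (on the kernel piece $\eta(h^2 X)=0$). So $\mathrm{tr}(\phi h\phi h) = -\mathrm{tr}(h^2)$. Hmm — the discrepancy suggests either the trace convention absorbs signs from the $\phi$-basis (the $Y_i = \phi X_i$ have $g(Y_i,Y_i) = -1$), which flips the sign of $\mathrm{tr}(h^2)$ in the pseudo-Riemannian contraction, OR $S(\xi,\xi)$ should use the correct index placement.

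The hard part will be managing these pseudo-Riemannian signs consistently — the negative-definite part of the metric on $D^-$ means that "trace" in $\mathrm{tr}(h^2)$ as it appears in the statement is the pseudo-Riemannian trace $\sum_i\epsilon_i g(h^2 E_i, E_i)$, and tracking $\epsilon_i$ through every contraction (especially in $R(\xi,X)\xi$, $\mathrm{div}$, and the square $h^2$) is where sign errors creep in. I would fix conventions once at the start — $\mathrm{tr}(T) := \sum_i \epsilon_i g(TE_i, E_i)$ for a $\phi$-basis — verify $\mathrm{tr}(\phi^2) = 2n$, $\mathrm{tr}(\mathrm{Id}_{TM}) = 2n+1$ but on $D$ it's $2n$ hmm $\epsilon$-weighted: $\sum \epsilon_i = n - n = 0$?! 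No — $\mathrm{tr}(\mathrm{Id}) = \sum\epsilon_i g(E_i,E_i) = \sum\epsilon_i\epsilon_i = \sum 1 = 2n+1$. Right, $g(E_i,E_i) = \epsilon_i$ so $\epsilon_i g(E_i,E_i) = \epsilon_i^2 = 1$. Good, so $\mathrm{tr}(\phi^2) = \mathrm{tr}(\mathrm{Id}) - \mathrm{tr}(\eta\otimes\xi) = (2n+1) - 1 = 2n$. Then $\mathrm{tr}(\mathcal{A}) = \mathrm{tr}(\alpha\phi^2 + \phi h) = 2n\alpha$, but Corollary \ref{trace} says $-2n\alpha$ — so the convention here has $\mathcal{A} = -\nabla\xi$ giving $\mathrm{tr}(\mathcal{A}) = -\mathrm{tr}(\nabla\xi)$... anyway, I'll just follow the paper's established conventions mechanically. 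Modulo these bookkeeping sign issues, every identity in the theorem reduces to an algebraic manipulation of $\phi, h, \mathcal{A}$ using $h\phi+\phi h=0$, $\nabla_\xi\phi = 0$, $\mathcal{A} = \alpha\phi^2+\phi h$, and the contraction of \eqref{b3}, with no further geometric input needed.
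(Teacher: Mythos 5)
Your proposal follows essentially the same route as the paper's proof: specialize \eqref{b3} at $Y=\xi$ using $(\nabla _{X}\phi h)\xi =-\phi h\nabla _{X}\xi $ to get \eqref{iremm}, apply $\phi $ and the substitution $X\mapsto \phi X$ together with the anticommutation of $\nabla _{\xi }h$ with $\phi $ for \eqref{iremmm}--\eqref{iremmmm}, and contract \eqref{b3} over a $\phi $-basis for the Ricci identities. The one sign you could not settle in \eqref{SZETAZETA} is not a trace-convention issue but comes from your substituting $\mathcal{A}=\alpha \phi ^{2}+\phi h$ where \eqref{7} gives $\mathcal{A}=-\nabla \xi =-(\alpha \phi ^{2}+\phi h)$; with the correct sign, $\mathrm{tr}(\phi h\mathcal{A})=-\mathrm{tr}(\phi h\phi h)=\mathrm{tr}(h^{2})$, which yields the stated $S(\xi ,\xi )=-2n\alpha ^{2}+\mathrm{tr}\,h^{2}$.
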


\begin{proof}
If we replace $X$ by $\xi $ and $Y$ by $X$ in (\ref{b3}) and use (\ref{7})
we obtain (\ref{iremm}). For the proof of (\ref{iremmm}), we apply the
tensor field $\phi $ both sides of the (\ref{iremm}) and recall $\nabla
_{\xi }\phi =0$. Hence we have%
\begin{equation*}
-\phi R(X,\xi )\xi =\alpha ^{2}\phi X+2\alpha hX-\phi h^{2}X+(\nabla _{\xi
}h)X-g((\nabla _{\xi }h)X,\xi )\xi
\end{equation*}%
Replacing $X$ by $\phi X$ in (\ref{iremm}) we get%
\begin{equation*}
R(\xi ,\phi X)\xi =\alpha ^{2}\phi ^{3}X+2\alpha \phi h\phi X-h^{2}\phi
X+\phi (\nabla _{\xi }h)\phi X.
\end{equation*}%
If we apply $\phi $ to the last equation we have%
\begin{equation}
\phi R(\xi ,\phi X)\xi =\alpha ^{2}\phi ^{2}X+2\alpha h\phi X-h^{2}X+(\nabla
_{\xi }h)\phi X.  \label{irem4}
\end{equation}%
One can easily show that $\phi (\nabla _{\xi }h)X=-(\nabla _{\xi }h)\phi X.$
Combining (\ref{iremm}) with (\ref{irem4}) we get (\ref{iremmmm}).

Taking into acount $\phi $-basis and (\ref{b3}), Ricci curvature $S(X,\xi )$
can be given by%
\begin{eqnarray}
S(X,\xi ) &=&\dsum\limits_{i=1}^{n}[g(R(e_{i},X)\xi ,e_{i})-g(R(\phi
e_{i},X)\xi ,\phi e_{i})]  \label{osman} \\
&=&-2n\alpha ^{2}\eta (X)-\dsum\limits_{i=1}^{n}(g((\nabla _{X}\phi
h)e_{i},e_{i})-g((\nabla _{X}\phi h)\phi e_{i},\phi e_{i}))  \notag \\
&&+\dsum\limits_{i=1}^{n}(g((\nabla _{e_{i}}\phi h)X,e_{i})-g((\nabla _{\phi
e}\phi h)X,\phi e_{i}))  \notag
\end{eqnarray}%
After some calculations we have%
\begin{equation*}
\dsum\limits_{i=1}^{n}(g((\nabla _{X}\phi h)e_{i},e_{i})-g((\nabla _{X}\phi
h)\phi e_{i},\phi e_{i}))=0,
\end{equation*}%
\begin{equation*}
\dsum\limits_{i=1}^{n}(g((\nabla _{e_{i}}\phi h)X,e_{i})-g((\nabla _{\phi
e_{i}}\phi h)X,\phi e_{i}))=g(div(\phi h),X).
\end{equation*}%
Using the last two equations in (\ref{osman}) we obtain%
\begin{equation*}
S(X,\xi )=-2n\alpha ^{2}\eta (X)+g(div(\phi h),X).
\end{equation*}%
By direct calculation, we find%
\begin{equation*}
S(\xi ,\xi )=-2n\alpha ^{2}+trh^{2}.
\end{equation*}
\end{proof}

\begin{proposition}
\label{R3}Let ($M^{2n+1},\phi ,\xi ,\eta ,g)$ be an almost $\alpha $%
-para-Kenmotsu manifold. Then, for any $X,Y,Z\in \chi (M^{2n+1})$ we have%
\begin{eqnarray}
&&g(R(\xi ,X)Y,Z)+g(R(\xi ,X)\phi Y,\phi Z)-g(R(\xi ,\phi X)\phi
Y,Z)-g(R(\xi ,\phi X)Y,\phi Z)  \notag \\
&=&2(\nabla _{hX}\Phi )(Y,Z)+2\alpha ^{2}\eta (Y)g(X,Z)-2\alpha ^{2}\eta
(Z)g(X,Y)  \notag \\
&&-2\alpha \eta (Z)g(\phi hX,Y)+2\alpha \eta (Y)g(\phi hX,Z).  \label{irem5}
\end{eqnarray}
\end{proposition}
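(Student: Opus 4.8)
The plan is to compute the left-hand side by invoking the second Bianchi / curvature identity repeatedly and reducing everything to covariant derivatives of $\Phi$. The starting point is the well-known identity
\[
g(R(X,Y)\phi Z,W)-g(R(X,Y)Z,\phi W)=(\nabla_{X}\nabla_{Y}\Phi)(Z,W)-(\nabla_{Y}\nabla_{X}\Phi)(Z,W)-(\nabla_{[X,Y]}\Phi)(Z,W),
\]
which follows from differentiating $\Phi(Z,W)=g(\phi Z,W)$ twice and using the Ricci identity for $\phi$. Applying this with $X$ replaced by $\xi$ converts each of the four curvature terms on the left of (\ref{irem5}) into a second-covariant-derivative-of-$\Phi$ expression; the crucial simplification is that $\nabla_{\xi}\phi=0$ (Proposition \ref{properties2}), so many of the terms involving $\nabla_\xi(\nabla\Phi)$ collapse.

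Next I would systematically group the four terms. Note that the combination on the left is built to be the ``$\phi$-invariant part'' of $R(\xi,\cdot)$: it is exactly what one gets by symmetrising $g(R(\xi,X)Y,Z)$ over the replacements $X\mapsto\phi X$, $Y\mapsto\phi Y$ (with a sign) and $Z\mapsto\phi Z$. So I would rewrite each curvature term using (\ref{b3}) of Theorem \ref{R} for $R(X,Y)\xi$ together with the first Bianchi identity $g(R(\xi,X)Y,Z)=-g(R(X,Y)\xi,Z)-g(R(Y,\xi)X,Z)$, trading $R(\xi,\cdot)\cdot$ for $R(\cdot,\cdot)\xi$ and hence for the explicit right-hand side of (\ref{b3}) involving $\nabla_X\phi h$. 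The terms $\alpha\eta(X)(\alpha Y+\phi hY)$ etc. in (\ref{b3}), after the four-fold alternation, will produce precisely the algebraic terms $2\alpha^2\eta(Y)g(X,Z)-2\alpha^2\eta(Z)g(X,Y)-2\alpha\eta(Z)g(\phi hX,Y)+2\alpha\eta(Y)g(\phi hX,Z)$ on the right of (\ref{irem5}) — here one uses $g(\phi^2 X,Z)=g(X,Z)-\eta(X)\eta(Z)$ and $\eta\circ\phi=0$, $h\xi=0$ to clean up.

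The remaining terms are the $\nabla(\phi h)$ contributions, and these must collapse to the single term $2(\nabla_{hX}\Phi)(Y,Z)$. To see this I would use the Kenmotsu-type identity (\ref{MEHMET})/Lemma \ref{second} only insofar as it expresses $(\nabla_X\phi)Y$ in terms of $h$ and $\alpha$ — but actually (\ref{irem5}) should hold for \emph{all} almost $\alpha$-para-Kenmotsu manifolds, not just those with para-Kaehler leaves, so instead I would lean on the general Lemma \ref{2-form} (relations (\ref{ii}),(\ref{iii})) and Proposition \ref{B(X,Y,Z)}, rewriting $\nabla_X(\phi h)=(\nabla_X\phi)h+\phi(\nabla_X h)$ and using $h\phi+\phi h=0$, $h\xi=0$ together with the contracted second Bianchi identity to fold the four $\nabla h$ terms into $2(\nabla_{hX}\Phi)(Y,Z)$. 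The main obstacle will be exactly this last bookkeeping step: keeping track of which $\phi$'s land on which slot when one alternates a $(0,3)$-tensor built from $\nabla\phi h$ over three independent $\phi$-substitutions, and verifying that the $\nabla_\xi h$ and $\eta$-valued pieces all cancel. I expect the argument to go through by patiently expanding, then repeatedly applying (\ref{iv}), (\ref{ii}), (\ref{iii}) and the fact that $\nabla_\xi\phi=0$, but the combinatorics of signs is where care is needed.
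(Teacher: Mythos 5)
Your high-level outline does match the paper's: the authors likewise convert the left-hand side into expressions in $R(\cdot,\cdot)\xi$ and then substitute (\ref{b3}). They do it in a single step, however, via the pair symmetry $g(R(\xi ,X)Y,Z)=g(X,R(Y,Z)\xi )$, not via the first Bianchi identity; your Bianchi route leaves the residual term $g(R(Y,\xi )X,Z)$, which is not yet of the form $g(R(\cdot,\cdot)\xi ,\cdot)$ and must itself be converted by pair symmetry, so it only adds bookkeeping. Two of the tools you invoke are misdirections: the Ricci identity for $\phi $ (that is the engine of Theorem \ref{Q}, not of this proposition), and the ``contracted second Bianchi identity'', which concerns the divergence of the Ricci tensor and has no role in this pointwise tensor computation. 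Your observation that the result should not require para-Kaehler leaves is correct and consistent with the paper.

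The genuine gap is that the heart of the proof --- showing that the eight $\nabla (\phi h)$ terms produced by the four alternated copies of (\ref{b3}) collapse to exactly $2(\nabla _{hX}\Phi )(Y,Z)$ plus the stated $\alpha $-terms --- is asserted but never carried out. The paper collects those terms into $\mathcal{F}(X,Y,Z)-\mathcal{F}(X,Z,Y)$ with $\mathcal{F}(X,Y,Z)=g(X,(\nabla _{Y}\phi h)Z+\phi (\nabla _{Y}\phi h)\phi Z)+g(X,(\nabla _{\phi Y}\phi h)\phi Z)-g(\phi X,(\nabla _{\phi Y}\phi h)Z)$, and then needs three specific inputs you do not supply: the algebraic identity $\phi (\nabla _{Y}\phi h)Z\,\phi +(\nabla _{Y}\phi h)Z=(\nabla _{Y}\phi )hZ-h(\nabla _{Y}\phi )Z$, the structural identity (\ref{original}), namely $\phi (\nabla _{\phi X}\phi )Y+(\nabla _{X}\phi )Y=-2\alpha \eta (Y)\phi X+g(\alpha \phi X+hX,Y)\xi $, and the evaluation $\eta ((\nabla _{\phi Y}\phi h)\phi Z)=g(hZ,\alpha \phi Y-hY)$. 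These yield $\mathcal{F}(X,Y,Z)=-2g(hX,(\nabla _{Y}\phi )Z)-2\alpha \eta (Z)g(h\phi Y,X)+2\alpha \eta (X)g(h\phi Y,Z)$, after which the antisymmetrized pair $(\nabla _{Z}\Phi )(Y,hX)-(\nabla _{Y}\Phi )(Z,hX)$ is turned into $(\nabla _{hX}\Phi )(Y,Z)$ plus $\alpha \eta $-terms by the cyclic-sum formula relating $\sigma _{Y,Z,hX}(\nabla _{Y}\Phi )(Z,hX)$ to $d\Phi (Y,Z,hX)$ together with $d\Phi =2\alpha \eta \wedge \Phi $ and $\eta (hX)=0$. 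Note in particular that in the paper's intermediate step (\ref{irem6}) only the $2\alpha ^{2}$ terms survive directly from the algebraic part of (\ref{b3}); the $\alpha $-linear terms $\mp 2\alpha \eta (\cdot)g(\phi hX,\cdot)$ on the right of (\ref{irem5}) emerge from the derivative terms through (\ref{original}) and the $d\Phi $ step, contrary to your claim that the four-fold alternation of the algebraic part of (\ref{b3}) already produces them. Without these identities --- none of which is furnished by Lemma \ref{2-form} or Proposition \ref{B(X,Y,Z)} alone --- the ``patient expansion'' you defer to does not close.
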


\begin{proof}
The symmetries of the curvature tensor give $g(R(\xi ,X)Y,Z)=g(X,R(Y,Z)\xi )$
and then, using (\ref{b3}), the left hand side can be written as%
\begin{equation}
2\alpha ^{2}\eta (Y)g(X,Z)-2\alpha ^{2}\eta (Z)g(X,Y)+\mathcal{F}(X,Y,Z)-%
\mathcal{F}(X,Z,Y),  \label{irem6}
\end{equation}%
where%
\begin{eqnarray*}
\mathcal{F}(X,Y,Z) &=&g(X,(\nabla _{Y}\phi h)Z+\phi (\nabla _{Y}\phi h)\phi
Z) \\
&&+g(X,(\nabla _{\phi Y}\phi h)\phi Z)-g(\phi X,(\nabla _{\phi Y}\phi h)Z).
\end{eqnarray*}%
By direct computations we have%
\begin{equation}
\phi (\nabla _{Y}\phi h)\phi Z+(\nabla _{Y}\phi h)Z=(\nabla _{Y}\phi
)hZ-h(\nabla _{Y}\phi )Z,  \label{irem7}
\end{equation}

and%
\begin{eqnarray}
g(X,(\nabla _{\phi Y}\phi h)\phi Z)-g(\phi X,(\nabla _{\phi Y}\phi h)Z)
&=&-g(\phi X,\phi ((\nabla _{\phi Y}\phi h)\phi Z))  \notag \\
&&+\eta (X)\eta ((\nabla _{\phi Y}\phi h)\phi Z)-g(\phi X,(\nabla _{\phi
Y}\phi h)Z).  \label{irem8}
\end{eqnarray}%
Using (\ref{irem7}), (\ref{irem8}), (\ref{original}) and the equality $\eta
((\nabla _{\phi Y}\phi h)\phi Z)=g(hZ,\alpha \phi Y-hY),$we obtain%
\begin{equation}
\mathcal{F}(X,Y,Z)=-2g(hX,(\nabla _{Y}\phi )Z)-2\alpha \eta (Z)g(h\phi
Y,X)+2\alpha \eta (X)g(h\phi Y,Z).  \label{irem9}
\end{equation}%
Using (\ref{irem9}) in (\ref{irem6}), the required formula $\mathcal{\sigma }%
_{Y,Z,hX}(\nabla _{Y}\Phi )(Z,hX)=d\Phi (Y,Z,hX)$ and $d\Phi =2\alpha \eta
\wedge \Phi $ $.$
\end{proof}

\begin{theorem}
\label{Q}Let $(M^{2n+1},\phi ,\xi ,\eta ,g)$ be an almost $\alpha $%
-para-Kenmotsu manifold with para-Kaehler leaves. Then the following
identity holds%
\begin{equation}
Q\phi -\phi Q=l\phi -\phi l-4\alpha (1-n)h-\eta \otimes \phi Q+(\eta \circ
Q\phi )\otimes \xi ,  \label{Q1}
\end{equation}%
where $l$ denotes the Jacobi operator, defined by $lX=R(X,\xi )\xi .$
\end{theorem}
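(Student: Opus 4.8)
The idea is to read the identity off the geometry of the leaves rather than to grind out curvatures. Since $\eta$ is closed, $D=\ker\eta$ is an integrable distribution with foliation $\mathcal F$; $\phi$ preserves $D$ and restricts on each leaf to the para-complex structure $\bar J$ of the induced almost para-Kaehler structure. By (\ref{7}), $\nabla\xi=\alpha\phi^{2}+\phi h=-\mathcal A$, and since $\phi^{2}=\mathrm{Id}$ on $D$, the shape operator of a leaf with respect to the unit normal $\xi$ is $W:=\mathcal A\!\mid_{D}=-\alpha\,\mathrm{Id}_{D}-(\phi h)\!\mid_{D}$, with $\operatorname{tr}_{D}W=\operatorname{tr}\mathcal A=-2\alpha n$ by Corollary \ref{trace} (and $\mathcal A\xi=0$). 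The hypothesis of para-Kaehler leaves means $\bar\nabla\bar J=0$ on each leaf; consequently the leaf curvature operator commutes with $\bar J$, and contracting this (with the first Bianchi identity) gives that the leaf Ricci operator $\bar Q$ commutes with $\phi$, i.e. $\bar S(\phi X,Y)=-\bar S(X,\phi Y)$ for $X,Y$ tangent to the leaf.

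Next I would feed the Gauss equation $g(R(X,Y)Z,W')=g(\bar R(X,Y)Z,W')-g(WX,W')g(WY,Z)+g(WX,Z)g(WY,W')$ (for $X,Y,Z,W'\in D$) into the definition of the Ricci tensor. Tracing over a $\phi$-adapted basis $\{X_{1},\dots,X_{n},\phi X_{1},\dots,\phi X_{n}\}$ of $D$ and adding the $\xi$-term $g(R(\xi,X)Y,\xi)=-g(R(\xi,X)\xi,Y)=g(lX,Y)$ (since $R(\xi,X)\xi=-lX$) yields, for $X,Y\in D$,
\[
S(X,Y)=\bar S(X,Y)+2\alpha n\,g(WX,Y)+g(W^{2}X,Y)+g(lX,Y).
\]

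Now use $g\big((Q\phi-\phi Q)X,Y\big)=S(\phi X,Y)+S(X,\phi Y)$ (immediate, $\Phi$ being a $2$-form). For $X,Y\in D$ I would substitute the previous display, drop the leaf part by the first paragraph, and reduce the remainder using $\phi^{2}=\mathrm{Id}_{D}$ and $\phi h+h\phi=0$ (whence also $h^{2}\phi=\phi h^{2}$): one gets $2\alpha n[g(W\phi X,Y)+g(WX,\phi Y)]=4\alpha n\,g(hX,Y)$, $g(W^{2}\phi X,Y)+g(W^{2}X,\phi Y)=-4\alpha\,g(hX,Y)$, and $g(l\phi X,Y)+g(lX,\phi Y)=g((l\phi-\phi l)X,Y)$, hence $(Q\phi-\phi Q)X=(l\phi-\phi l)X+4\alpha(n-1)hX$ for $X\in D$ modulo $\xi$. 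Finally I would remove the restriction to $D$ via the splitting $X=(X-\eta(X)\xi)+\eta(X)\xi$ together with $h\xi=l\xi=\phi\xi=0$ and $\eta\circ\phi=0$: the $\xi$-components of $QX$ and $lX$ invisible to the $D$-computation reassemble into exactly $-\eta(X)\phi Q\xi+\eta(Q\phi X)\xi$, which is the displayed $-\eta\otimes\phi Q+(\eta\circ Q\phi)\otimes\xi$; consistency is checked by applying both sides to $\xi$ and by composing with $\eta$.

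The work is concentrated in sign- and index-bookkeeping in two places: fixing the curvature convention (the Ricci identity quoted in the text fixes $R(X,Y)=\nabla_X\nabla_Y-\nabla_Y\nabla_X-\nabla_{[X,Y]}$) so that the Gauss equation is applied with the correct sign and the coefficient emerges as $-4\alpha(1-n)$ rather than its negative — the vanishing of this term for $n=1$ is a convenient check — and the last step, where one must be careful that the $D$-level computation really produces $S(\phi X,Y)$ and that the $\xi$-directions are not counted twice. The same conclusion can alternatively be obtained by differentiating (\ref{MEHMET}) once more, alternating, and contracting over a $\phi$-basis; the leaf argument is just the cleanest packaging of the underlying fact that, the leaves being para-Kaehler, their Ricci operators contribute nothing to $S(\phi X,Y)+S(X,\phi Y)$.
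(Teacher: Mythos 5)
Your argument is correct, and it reaches (\ref{Q1}) by a genuinely different route from the paper. You use the para-Kaehler hypothesis only in its defining form $\bar{\nabla}\bar{J}=0$ on each leaf, deduce from the resulting commutation $\bar{R}(X,Y)\bar{J}=\bar{J}\bar{R}(X,Y)$ together with the pair symmetry of $\bar{R}$ that $\bar{S}(\bar{J}X,\bar{J}Y)=-\bar{S}(X,Y)$, hence $\bar{Q}\bar{J}=\bar{J}\bar{Q}$, and then transfer everything to $M$ through the Gauss equation: after tracing, the only ambient curvature surviving is the Jacobi operator, and the extrinsic terms $-\mathrm{tr}(\mathcal{A})\,\mathcal{A}+\mathcal{A}^{2}$ produce, once anti-commuted with $\phi$ via $\mathcal{A}\phi-\phi\mathcal{A}=2h$ and $\mathcal{A}h+h\mathcal{A}=-2\alpha h$, exactly the coefficient $4\alpha(n-1)=-4\alpha(1-n)$; I verified these identities and the $\xi$-direction bookkeeping ($\eta\circ l=0$, $l\xi=0$, $h\xi=0$) against the paper's conventions and everything is consistent. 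The paper instead works entirely intrinsically on $M$: it uses the pointwise characterization (\ref{MEHMET}) of para-Kaehler leaves, applies the Ricci identity to $\phi$ to obtain $R(X,Y)\phi Z-\phi R(X,Y)Z$, converts this into a relation between $g(R(\phi X,\phi Y)\phi Z,\phi W)$ and $g(R(Z,W)X,Y)$, and traces over a $\phi$-basis. Your route is shorter and more conceptual — it isolates exactly where the hypothesis enters (the $\bar{J}$-invariance of the leaf Ricci operator) and makes the vanishing of the $h$-term for $n=1$ transparent — at the cost of importing the submanifold machinery and leaving two standard facts (the commutation of the para-Kaehler Ricci operator with $\bar{J}$, and the signed Gauss equation for a spacelike unit normal) quoted rather than proved; the paper's longer computation stays intrinsic and generates along the way the auxiliary identities (\ref{Q2})--(\ref{Q8}) that it reuses elsewhere.
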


\begin{proof}
We recall the formula (\ref{b3}) 
\begin{equation}
R(X,Y)\xi =\alpha \eta (X)(\alpha Y+\phi hY)-\alpha \eta (Y)(\alpha X+\phi
hX)+(\nabla _{X}\phi h)Y-(\nabla _{Y}\phi h)X.  \notag
\end{equation}%
On the other hand 
\begin{equation}
(\nabla _{X}\phi h)Y=(\nabla _{X}\phi )hY+\phi ((\nabla _{X}h)Y).
\label{PARAKEN2}
\end{equation}%
Using (\ref{MEHMET}) and (\ref{PARAKEN2}), we obtain 
\begin{eqnarray}
R(X,Y)\xi &=&\alpha \eta (X)(\alpha Y+\phi hY)-\alpha \eta (Y)(\alpha X+\phi
hX)  \label{PARAKEN3} \\
&&+\phi ((\nabla _{X}h)Y-(\nabla _{Y}h)X)+(\nabla _{X}\phi )hY-(\nabla
_{Y}\phi )hX.  \notag
\end{eqnarray}%
By assumption $M^{2n+1}$ has para-Kaehler leaves thus by (\ref{MEHMET}) $%
(\nabla _{X}\phi )hY=\alpha g(\phi X,hY)\xi +g(hX,hY)\xi $ in consequence,
as $h\phi $ is symmetric, $(\nabla _{X}\phi )hY-(\nabla _{Y}\phi )hX$
vanishes identically. Since $h$ is a symmetric operator we easily get 
\begin{equation}
g((\nabla _{X}h)Y-(\nabla _{Y}h)X,\xi )=g((\nabla _{X}h)\xi ,Y)-g((\nabla
_{Y}h)\xi ,X).  \label{PARAKEN4}
\end{equation}%
Using the formulas (\ref{7}), $h\xi =0$ and $\phi h+h\phi =0$ in (\ref%
{PARAKEN4}) we find 
\begin{equation}
g((\nabla _{X}h)Y-(\nabla _{Y}h)X,\xi )=2g(\phi h^{2}X,Y).  \label{PARAKEN5}
\end{equation}%
By applying $\phi $ to (\ref{PARAKEN3}) and using $\phi ^{2}=I-\eta \otimes
\xi $ and (\ref{PARAKEN5})\ we obtain%
\begin{eqnarray}
(\nabla _{X}h)Y-(\nabla _{Y}h)X &=&\phi R(X,Y)\xi +2g(\phi h^{2}X,Y)\xi
\label{Q2} \\
&&-\alpha ^{2}(\eta (X)\phi Y-\eta (Y)\phi X)-\alpha (\eta (X)hY-\eta (Y)hX).
\notag
\end{eqnarray}%
Now we suppose that $P$ is a fixed point of $M$ and $X,Y,Z$ are vector
fields such that $(\nabla X)_{P}=$ $(\nabla Y)_{P}=$ $(\nabla Z)_{P}=0.$ The
Ricci identity for $\phi $%
\begin{equation*}
R(X,Y)\phi Z-\phi R(X,Y)Z=(\nabla _{X}\nabla _{Y}\phi )Z-(\nabla _{Y}\nabla
_{X}\phi )Z-(\nabla _{\left[ X,Y\right] }\phi )Z,
\end{equation*}%
at the point $P$, reduces to the form%
\begin{equation*}
R(X,Y)\phi Z-\phi R(X,Y)Z=\nabla _{X}(\nabla _{Y}\phi )Z-\nabla _{Y}(\nabla
_{X}\phi )Z.
\end{equation*}%
Due to our assumption that $M^{2n+1}$ has para-Kaehler leaves from (\ref%
{MEHMET}) we obtain at $P$ By virtue of the integrability condition we have,
at $P$,%
\begin{eqnarray}
R(X,Y)\phi Z-\phi R(X,Y)Z &=&\nabla _{X}(\nabla _{Y}\phi )Z-\nabla
_{Y}(\nabla _{X}\phi )Z  \notag \\
&=&\alpha \left( g((\nabla _{X}\phi )Y-(\nabla _{Y}\phi )X,Z)\xi -\eta
(Z)((\nabla _{X}\phi )Y-(\nabla _{Y}\phi )X)\right)  \notag \\
&&+g((\nabla _{X}h)Y-(\nabla _{Y}h)X,Z)\xi -\eta (Z)((\nabla _{X}h)Y-(\nabla
_{Y}h)X))  \notag \\
&&+g(\alpha \phi Y+hY,Z)(\alpha \phi ^{2}X+\phi hX)-g(\alpha \phi
X+hX,Z)(\alpha \phi ^{2}Y+\phi hY)  \notag \\
&&-g(Z,\alpha \phi ^{2}X+\phi hX)(\alpha \phi Y+hY)+g(Z,\alpha \phi
^{2}Y+\phi hY)(\alpha \phi X+hX).  \label{Q3}
\end{eqnarray}%
Using (\ref{MEHMET}) and (\ref{Q2}) in (\ref{Q3}) we find%
\begin{eqnarray}
R(X,Y)\phi Z-\phi R(X,Y)Z &=&g(\phi R(X,Y)\xi ,Z)\xi -\eta (Z)\phi R(X,Y)\xi
\notag \\
&&+g(\alpha \phi Y+hY,Z)(\alpha \phi ^{2}X+\phi hX)-g(\alpha \phi
X+hX,Z)(\alpha \phi ^{2}Y+\phi hY)  \notag \\
&&-g(Z,\alpha \phi ^{2}X+\phi hX)(\alpha \phi Y+hY)+g(Z,\alpha \phi
^{2}Y+\phi hY)(\alpha \phi X+hX).  \label{Q4}
\end{eqnarray}%
Using (\ref{1}) and the curvature tensor properties we get%
\begin{equation}
g(\phi R(\phi X,\phi Y)Z,\phi W)=-g(R(Z,W)\phi X,\phi Y)+\eta (R(\phi X,\phi
Y)Z)\eta (W).  \label{Q5}
\end{equation}%
Then by (\ref{Q4}) and (\ref{Q5}) we obtain%
\begin{eqnarray}
g(\phi R(\phi X,\phi Y)Z,\phi W) &=&-g(\phi R(Z,W)X,\phi Y)+\eta (R(\phi
X,\phi Y)Z)\eta (W)  \notag \\
&&+\eta (X)g(\phi R(Z,W)\xi ,\phi Y)  \notag \\
&&-g(\alpha \phi W+hW,X)(g(\alpha \phi ^{2}Z,\phi Y)+g(\phi hZ,\phi Y)) 
\notag \\
&&+g(\alpha \phi Z+hZ,X)(g(\alpha \phi ^{2}W,\phi Y)+g(\phi hW,\phi Y)) 
\notag \\
&&+g(X,\alpha \phi ^{2}Z+\phi hZ)((g(\alpha \phi W,\phi Y)+g(hW,\phi Y)) 
\notag \\
&&-g(X,\alpha \phi ^{2}W+\phi hW)((g(\alpha \phi Z,\phi Y)+g(hZ,\phi Y)).
\label{Q6}
\end{eqnarray}%
Replacing in (\ref{Q4}) $X,Y$ by $\phi X,\phi Y$ respectively, and taking
the inner product with $\phi W,$ we \ get%
\begin{eqnarray}
g(R(\phi X,\phi Y)\phi Z,\phi W)-g(\phi R(\phi X,\phi Y)Z,\phi W) &=&-\eta
(Z)g(\phi R(\phi X,\phi Y)\xi ,\phi W)  \notag \\
&&+g(\alpha \phi ^{2}Y+h\phi Y,Z)g(\alpha \phi ^{3}X+\phi h\phi X,\phi W) 
\notag \\
&&-g(\alpha \phi ^{2}X+h\phi X,Z)g(\alpha \phi ^{3}Y+\phi h\phi Y,\phi W) 
\notag \\
&&-g(Z,\alpha \phi ^{3}X+\phi h\phi X)g(\alpha \phi ^{2}Y+h\phi Y,\phi W) 
\notag \\
&&+g(Z,\alpha \phi ^{3}Y+\phi h\phi Y)g(\alpha \phi ^{2}X+h\phi X,\phi W).
\label{Q7}
\end{eqnarray}%
Comparing (\ref{Q6}) to (\ref{Q7}) we get by direct computation%
\begin{eqnarray}
g(R(\phi X,\phi Y)\phi Z,\phi W) &=&g(R(Z,W)X,Y)-\eta (R(Z,W)X)\eta (Y) 
\notag \\
&&-\eta (X)g(R(Z,W)\xi ,Y)+\eta (R(\phi X,\phi Y)Z)\eta (W)  \notag \\
&&-\eta (Z)g(\phi R(\phi X,\phi Y)\xi ,\phi W)  \notag \\
&&-2\alpha g(X,Z)g(Y,\phi hW)+2\alpha \eta (X)\eta (Z)g(\phi hW,Y)  \notag \\
&&+2\alpha g(Y,Z)g(X,\phi hW)-2\alpha \eta (Y)\eta (Z)g(\phi hW,X)  \notag \\
&&+2\alpha g(X,W)g(Y,\phi hZ)-2\alpha \eta (X)\eta (W)g(\phi hZ,Y)  \notag \\
&&+2\alpha \eta (Y)\eta (W)g(X,\phi hZ)-2\alpha g(Y,W)g(\phi hZ,X).
\label{Q8}
\end{eqnarray}%
Let $\left\{ e_{i},\phi e_{i},\xi \right\} ,i\in \left\{ 1,...n\right\} ,$
be a local $\phi $-basis. Setting $Y=Z=e_{i}$ in (\ref{Q8}), we have%
\begin{eqnarray}
\dsum\limits_{i=1}^{n}g(R(\phi X,\phi e_{i})\phi e_{i},\phi W)
&=&\dsum\limits_{i=1}^{n}(g(R(e_{i},W)X,e_{i})-\eta (X)g(R(e_{i},W)\xi
,e_{i})+\eta (R(\phi X,\phi e_{i})e_{i})\eta (W)  \notag \\
&&-2\alpha g(X,e_{i})g(e_{i},\phi hW)+2\alpha g(e_{i},e_{i})g(\phi hW,X) 
\notag \\
&&+2\alpha g(X,W)g(e_{i},\phi he_{i})-2\alpha \eta (X)\eta (W)g(\phi
he_{i},e_{i})  \notag \\
&&-2\alpha g(e_{i},W)g(\phi he_{i},X)).  \label{Q9}
\end{eqnarray}%
On the other hand, putting $Y=Z=\phi e_{i}$ in (\ref{Q8}), we get%
\begin{eqnarray}
\dsum\limits_{i=1}^{n}g(R(\phi X,e_{i})e_{i},\phi W)
&=&\dsum\limits_{i=1}^{n}(g(R(\phi e_{i},W)X,\phi e_{i})-\eta (X)g(R(\phi
e_{i},W)\xi ,\phi e_{i})+\eta (R(\phi X,e_{i})\phi e_{i})\eta (W)  \notag \\
&&-2\alpha g(X,\phi e_{i})g(\phi e_{i},\phi hW)+2\alpha g(\phi e_{i},\phi
e_{i})g(\phi hW,X)  \notag \\
&&+2\alpha g(X,W)g(\phi e_{i},\phi h\phi e_{i})-2\alpha \eta (X)\eta
(W)g(\phi h\phi e_{i},\phi e_{i})  \notag \\
&&-2\alpha g(\phi e_{i},W)g(\phi h\phi e_{i},X)).  \label{Q10}
\end{eqnarray}%
Using the definition of the Ricci operator, (\ref{Q9}) and (\ref{Q10}), one
can easily get%
\begin{eqnarray}
-\phi Q\phi X+\phi l\phi X+QX-lX &=&\eta (X)Q\xi +4\alpha (1-n)\phi hX 
\notag \\
&&+\dsum\limits_{i=1}^{n}(g(R(\phi X,e_{i})\phi e_{i},\xi )-g(R(\phi X,\phi
e_{i})e_{i},\xi ))\xi .  \label{Q11}
\end{eqnarray}%
Finally, applying $\phi $ to (\ref{Q11}) and using $\phi ^{2}=I-\eta \otimes
\xi ,$ we obtain the requested equation.
\end{proof}

\begin{theorem}
Let $M^{2n+1}$ be an almost $\alpha $-para-Kenmotsu manifold of constant
sectional curvature $c$. Then $c=-\alpha ^{2}$ and $h^{2}=0.$
\end{theorem}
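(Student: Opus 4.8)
The plan is to exploit the curvature formula \eqref{iremm} for $R(\xi,X)\xi$ together with the constant-curvature hypothesis. If $M^{2n+1}$ has constant sectional curvature $c$, then $R(X,Y)Z=c(g(Y,Z)X-g(X,Z)Y)$, so in particular $R(\xi,X)\xi=c(g(X,\xi)\xi-g(\xi,\xi)X)=c\,\eta(X)\xi-cX=-c\phi^2 X$. Plugging this into \eqref{iremm} gives
\begin{equation*}
-c\phi^2 X=\alpha^2\phi^2 X+2\alpha\phi hX-h^2 X+\phi(\nabla_\xi h)X,
\end{equation*}
and using the expression \eqref{iremmm} for $(\nabla_\xi h)X$ (equivalently, just applying $\phi$ and comparing) one isolates an algebraic relation among $\phi^2$, $h$, $\phi h$, $h^2$. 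The first goal is to extract from this the two scalar conclusions $c=-\alpha^2$ and $h^2=0$.

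First I would extract $c=-\alpha^2$ by a trace/eigenvalue argument restricted to the contact distribution $D=\ker\eta$. On $D$ one has $\phi^2=\mathrm{Id}$, so the displayed relation becomes $-(c+\alpha^2)\,\bar X=2\alpha\phi h\bar X-h^2\bar X+\phi(\nabla_\xi h)\bar X$ for $\bar X\in D$. Taking the inner product with $\bar X$ and summing over a $\phi$-basis: the terms $g(\phi h\bar X,\bar X)$ and $g(\phi(\nabla_\xi h)\bar X,\bar X)$ vanish upon tracing over a $\phi$-basis because $\operatorname{tr}(\phi h)=0$ (Corollary \ref{trace}) and because $\phi(\nabla_\xi h)$ anticommutes appropriately with $\phi$ — more carefully, I would use that $h\phi=-\phi h$ forces $h$ to interchange $D^+$ and $D^-$, so $g(h\bar X,\bar X)=0$ identically on each, and similarly for $\nabla_\xi h$ composed with $\phi$. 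That leaves $-(c+\alpha^2)\cdot 2n$ equal to $-\operatorname{tr}(h^2)$ over $D$, i.e. $2n(c+\alpha^2)=\operatorname{tr}(h^2)$. Comparing with \eqref{SZETAZETA}, $S(\xi,\xi)=-2n\alpha^2+\operatorname{tr}h^2$, while constant curvature forces $S(\xi,\xi)=2nc$; so $2nc=-2n\alpha^2+\operatorname{tr}h^2$, i.e. $\operatorname{tr}h^2=2n(c+\alpha^2)$, which is consistent but not yet enough. The decisive step: since $M$ has constant curvature it is Einstein with $S=2nc\,g$, hence $S(X,\xi)=2nc\,\eta(X)$; comparing with \eqref{SZETA}, $S(X,\xi)=-2n\alpha^2\eta(X)+g(\operatorname{div}(\phi h),X)$, gives $g(\operatorname{div}(\phi h),X)=2n(c+\alpha^2)\eta(X)$. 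Feeding this, together with the pointwise relation above, back into the full curvature identity \eqref{b3} (now with $R(X,Y)\xi=c(\eta(Y)X-\eta(X)Y)$ on the left) should pin down $(\nabla_X\phi h)Y-(\nabla_Y\phi h)X$ and then, after another trace, yield $c=-\alpha^2$ outright; with $c=-\alpha^2$ in hand, $\operatorname{tr}h^2=0$, and since $h$ is symmetric with respect to $g$ but $g$ is only pseudo-Riemannian, vanishing of $\operatorname{tr}h^2$ does not immediately give $h=0$ — so one argues $h^2=0$ directly from the algebraic curvature relation, which with $c+\alpha^2=0$ reduces to $2\alpha\phi h-h^2+\phi(\nabla_\xi h)=0$ plus its $\phi$-conjugate, collapsing to $h^2=0$.

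The main obstacle I anticipate is precisely this last point: because the metric has signature $(n+1,n)$ and $h$ merely satisfies $g(hX,Y)=g(X,hY)$ with $h\phi+\phi h=0$, one cannot conclude $h=0$ from $\operatorname{tr}(h^2)=0$ (nilpotent self-adjoint operators exist in indefinite signature), so one must genuinely obtain the operator identity $h^2=0$ rather than a scalar one. I would handle this by combining \eqref{iremm} with its image under $X\mapsto\phi X$ followed by $\phi$, i.e. equation \eqref{iremmmm}, which reads $\tfrac12(R(\xi,X)\xi+\phi R(\xi,\phi X)\xi)=\alpha^2\phi^2 X-h^2 X$; under constant curvature the left side is $\tfrac12(-c\phi^2 X+\phi(-c\phi^2)\phi X)=-c\phi^2 X$ (since $\phi^3=\phi$ on $D$ and everything kills $\xi$), so $-c\phi^2 X=\alpha^2\phi^2 X-h^2 X$, giving $h^2 X=(\alpha^2+c)\phi^2 X$. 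Once $c=-\alpha^2$ is established this yields $h^2=0$ on the nose, and on $\xi$ it is automatic from $h\xi=0$. So the logical order is: (1) derive $h^2X=(c+\alpha^2)\phi^2X$ from \eqref{iremmmm}; (2) derive $c=-\alpha^2$ from the Einstein condition via \eqref{SZETA}–\eqref{SZETAZETA} and the trace of \eqref{b3}; (3) conclude $h^2=0$ by substituting (2) into (1). I expect step (2) to require the most care, since it is where the full three-index identity \eqref{b3} and the divergence term must be wrangled, whereas steps (1) and (3) are short algebra.
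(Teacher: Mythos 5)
Your steps (1) and (3) coincide with the paper's argument: equation (\ref{fuat}) substituted into (\ref{iremmmm}) gives $h^{2}X=(\alpha ^{2}+c)\phi ^{2}X$, and once $c=-\alpha ^{2}$ is known this immediately yields $h^{2}=0$ (and you are right that $\operatorname{tr}h^{2}=0$ alone would not suffice in signature $(n+1,n)$). The problem is step (2), which is where the whole difficulty lives and which you do not actually carry out. The route you sketch — Einstein condition plus (\ref{SZETA})--(\ref{SZETAZETA}) plus ``another trace'' of (\ref{b3}) — is circular: (\ref{SZETA}) and (\ref{SZETAZETA}) \emph{are} the trace of (\ref{b3}), so comparing them with $S=2nc\,g$ only returns $\operatorname{tr}h^{2}=2n(c+\alpha ^{2})$ and $g(\operatorname{div}(\phi h),X)=2n(c+\alpha^{2})\eta(X)$, both of which are automatically consistent with $h^{2}=(c+\alpha ^{2})\phi ^{2}$ for \emph{any} value of $c+\alpha^{2}$. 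You acknowledge this (``consistent but not yet enough'') and then assert that feeding things back into (\ref{b3}) ``should pin down'' the answer ``after another trace''; no trace of (\ref{b3}) can do so, and no concrete computation is offered. This is a genuine gap, not a presentational one.

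The missing idea is to differentiate along $\xi $. From (\ref{iremmm}) with $R(X,\xi )\xi =c\phi ^{2}X$ and $\phi h^{2}X=(\alpha ^{2}+c)\phi X$ one gets the operator identity $(\nabla _{\xi }h)X=-2\alpha hX$. On the other hand, differentiating $h^{2}=(\alpha ^{2}+c)\phi ^{2}$ along $\xi $ and using $\nabla _{\xi }\phi =0$ gives $(\nabla _{\xi }h)h+h(\nabla _{\xi }h)=0$, hence $-4\alpha h^{2}=0$, i.e. $4\alpha (\alpha ^{2}+c)\phi ^{2}=0$. Since $\alpha \neq 0$ (the paper implicitly excludes the paracosymplectic case $\alpha=0$ here), this forces $c=-\alpha ^{2}$ and therefore $h^{2}=0$. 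You should replace your step (2) by this differentiation argument; nothing purely algebraic or trace-based at a single point will close the gap.
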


\begin{proof}
If an almost $\alpha $-para-Kenmotsu manifold of constant sectional
curvature $c$ then 
\begin{equation}
R(\xi ,X)\xi =c(\eta (X)\xi -X)=\phi R(\xi ,\phi X)\xi .  \label{fuat}
\end{equation}%
for any $X$ $\in \Gamma (M)$. Using this relation in (\ref{iremmmm}) we have%
\begin{equation}
h^{2}X=(\alpha ^{2}+c)\phi ^{2}X  \label{Goknur}
\end{equation}
Differentiating (\ref{Goknur}) with respect to $\xi $ and using $\nabla
_{\xi }\phi =0$ we find $\nabla _{\xi }h^{2}=0$ which implies 
\begin{equation*}
(\nabla _{\xi }h)\circ h+h\circ (\nabla _{\xi }h)=0.
\end{equation*}%
Applying $\nabla _{\xi }$ to the above equation and using (\ref{iremmm}), we
get $(\nabla _{\xi }h)^{2}=0.$ Since $\nabla _{\xi }h$ is symmetric operator
one easily have%
\begin{equation}
0=g((\nabla _{\xi }h)^{2}X,Y)=g((\nabla _{\xi }h)X,(\nabla _{\xi }h)Y).
\label{Cemal}
\end{equation}%
By virtue of (\ref{Goknur}), (\ref{fuat}) and (\ref{iremmm}) we find%
\begin{equation*}
(\nabla _{\xi }h)X=-2\alpha hX
\end{equation*}%
Hence (\ref{Cemal}) is reduce to $4\alpha ^{2}g(h^{2}X,Y)=4\alpha
^{2}(\alpha ^{2}+c)g(\phi X,\phi Y)=0.$

Because of $\alpha \neq 0$, we obtain $c=-\alpha ^{2}$ and $h^{2}=0.$
\end{proof}

The proof of the following theorem is exactly same with almost Kenmotsu
manifolds \cite{DILEO}, therefore we omit their proofs.

\begin{theorem}
Let $M^{2n+1}$ be an almost $\alpha $- para Kenmotsu manifold with $h=0.$
Then $M^{2n+1}$ is a locally warped product $M_{1}\times _{f^{2}}M_{2},$%
where $M_{2}$ is an almost para Kaehler manifold, $M_{1}$ is an open
interval with coordinate $t$, and $f^{2}=we^{2t}$ for some positive constant.
\end{theorem}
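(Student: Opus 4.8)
The plan is to follow the classical Kenmotsu-type warped product argument (as in \cite{DILEO}) now in the paracontact setting. Since $h=0$, formula (\ref{7}) reduces to $\nabla\xi=\alpha\phi^{2}$, so for any $X$ orthogonal to $\xi$ we get $\nabla_{X}\xi=\alpha X$ and $\nabla_{\xi}\xi=0$; in particular $\xi$ is geodesic and its orthogonal distribution $\mathcal{D}:\eta=0$ is the one defining the foliation $\mathcal{F}$. First I would record that $\xi$ is unit and geodesic and that $\eta$ is closed, so the $1$-form $\eta$ is locally exact, $\eta=dt$, and $\xi=\partial_{t}$; this gives the open interval factor $M_{1}$ with coordinate $t$.

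Next I would check that $\mathcal{D}$ is totally umbilical in $M$ with mean curvature governed by $\alpha$: by the Gauss-type computation already used in the Proposition on totally umbilical leaves, $II(\bar X,\bar Y)=-\alpha g(\bar X,\bar Y)$ when $h=0$, so each leaf is totally umbilical with normal curvature vector $-\alpha\xi$. Combined with $\xi$ being geodesic and $\mathcal{L}_{\xi}\eta=0$, this is exactly the curvature condition in the de Rham--Hiepko / Ponge--Reckziegel characterization of (local) warped products: the orthogonal complement of a geodesic unit field whose perpendicular distribution is spherical yields a local splitting $M_{1}\times_{f}M_{2}$. So I would invoke that characterization to obtain $M\cong_{\mathrm{loc}}M_{1}\times_{f^{2}}M_{2}$ with the metric $dt^{2}+f(t)^{2}g_{M_{2}}$.

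To identify the warping function, I would use that along the leaves the induced structure is the almost para-Kaehler structure $(J,\langle\,,\rangle)=(\phi,g)|_{\mathcal{F}}$ described in Section 5; since $h=0$, Proposition \ref{alpha}'s formula becomes $(\nabla_{X}\phi)Y=\alpha g(\phi X,Y)\xi-\alpha\eta(Y)\phi X$, which restricted to $\mathcal{D}$ says $\bar\nabla J=0$, i.e. the leaves are genuinely para-Kaehler; hence $M_{2}$ carries an almost para-Kaehler (indeed para-Kaehler) metric, and after the conformal rescaling absorbed into the warping we may take $g_{M_{2}}$ itself para-Kaehler. Finally, writing the warped metric as $dt^{2}+f^{2}g_{M_{2}}$ and comparing $\nabla_{X}\xi=\alpha X$ with the warped-product Koszul formula $\nabla_{X}\partial_{t}=\frac{f'}{f}X$ for $X$ tangent to $M_{2}$ forces $f'/f=\alpha$, hence $f(t)=c\,e^{\alpha t}$; absorbing constants and recalling $\alpha=1$ in the para-Kenmotsu normalization (or keeping $\alpha$ general and rescaling $t$) gives $f^{2}=w e^{2t}$ for a positive constant $w$.

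The main obstacle is the local warped product decomposition itself: one must verify carefully that the distribution $\mathcal{D}$ is not merely totally umbilical on each leaf but \emph{spherical} (the umbilicity function, here $\alpha$ or its suitable multiple, is constant along $\mathcal{D}$), which is what is needed for the Hiepko-type theorem; this is where the hypothesis $h=0$ together with $\alpha=\mathrm{const}$ (almost $\alpha$-para-Kenmotsu) and the identity $d\alpha=f\eta$ from Proposition \ref{properties1}(vii) really enters. The rest — the ODE $f'/f=\alpha$ and the para-Kaehler nature of the fiber — is routine once the splitting is in hand, and since these steps run exactly parallel to the almost Kenmotsu case treated in \cite{DILEO}, the proof is omitted.
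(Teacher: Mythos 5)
Your proposal follows essentially the route the paper intends: the paper gives no proof at all, stating only that the argument is ``exactly the same'' as in the almost Kenmotsu case of \cite{DILEO}, and that argument is precisely the one you reconstruct ($\xi$ unit and geodesic, $\eta=dt$ locally, $\mathcal{D}$ totally umbilical with umbilicity function $\alpha=\mathrm{const}$ hence spherical, Hiepko/Ponge--Reckziegel splitting, and $f'/f=\alpha$ from comparing $\nabla_{X}\xi=\alpha X$ with the warped-product connection). You are also right that the sphericity of $\mathcal{D}$ is the one point that needs care, and that it is exactly where $h=0$ and $\alpha=\mathrm{const}$ enter. The discrepancy between your $f=ce^{\alpha t}$ and the stated $f^{2}=we^{2t}$ is a normalization issue in the theorem itself (rescale $t$ when $\alpha\neq 0$); your handling of it is acceptable.

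There is, however, one genuine logical misstep: you argue that because $h=0$, the formula of Proposition \ref{alpha} ``becomes'' $(\nabla_{X}\phi)Y=\alpha g(\phi X,Y)\xi-\alpha\eta(Y)\phi X$ and hence the leaves are para-Kaehler. Proposition \ref{alpha} is a biconditional characterization: the identity (\ref{MEHMET}) holds \emph{if and only if} the manifold has para-Kaehler leaves, so setting $h=0$ inside it tells you nothing unless you already know the leaves are para-Kaehler. The hypothesis $h=0$ alone does not force $\bar{\nabla}J=0$ on the leaves, and indeed the theorem only claims that $M_{2}$ is \emph{almost} para-Kaehler. That weaker (and sufficient) conclusion follows directly: the leaf structure $(J,\langle\,,\rangle)=(\phi,g)|_{\mathcal{F}}$ is almost para-Hermitian, and pulling back $d\Phi=2\alpha\,\eta\wedge\Phi$ to a leaf, where $\eta$ vanishes, shows the fundamental $2$-form of the leaf is closed. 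Replace your para-Kaehler claim by this observation and the proof is sound.
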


\begin{remark}
Almost Kenmotsu manifolds in almost contact metric geometry appeared in \cite%
{DILEO}, \cite{KimPak} and \cite{Olszak}. These manifolds were extensively
studied e.g. \cite{DILEO}, \cite{DilPast2}, \cite{PastSal}, \cite{Sal}.
Arbitrary almost Kenmotsu manifold can be locally deformed conformaly to
almost cosymplectic manifold. Almost Kenmotsu manifolds were generalized to
almost $\alpha $-Kenmotsu, $\alpha =const$, and subsequently to almost $%
\alpha $-cosymplectic manifolds.
\end{remark}


\section{Harmonic vector fields}

Let $(M,g)$ be smooth, oriented, connected pseudo-Riemannian manifold and $%
(TM,g^{S})$ its tangent bundle endowed with the Sasaki metric (also referred
to as Kaluza-Klein metric in Mathematical Physics) $g^{S}$. By definition,
the \textit{energy} of a smooth vector field $V$ on $M$ is the energy
corresponding $V:(M,g)\rightarrow (TM,g^{s}).$ When $M$ is compact, the 
\textit{energy} of $V$ is determined by%
\begin{equation*}
E(V)=\frac{1}{2}\int_{M}(tr_{g}V^{\ast }g^{s})dv=\frac{n}{2}vol(M,g)+\frac{1%
}{2}\int_{M}\left\Vert \nabla V\right\Vert ^{2}dv.
\end{equation*}%
The non-compact case, one can take into account over relatively compact
domains. It can be shown that $V:(M,g)\rightarrow (TM,g^{s})$ is harmonic
map if and only if%
\begin{equation}
tr\left[ R(\nabla .V,V).\right] =0,\ \nabla ^{\ast }\nabla V=0,
\label{NAMBLASTAR2}
\end{equation}%
where 
\begin{equation}
\nabla ^{\ast }\nabla V=-\sum_{i}\varepsilon _{i}(\nabla _{e_{i}}\nabla
_{e_{i}}V-\nabla _{\nabla _{e_{i}}e_{i}}V)  \label{NAMBLASTAR}
\end{equation}%
is the rough Laplacian with respect to a pseudo-orthonormal local frame $%
\left\{ e_{1},...,e_{n}\right\} $ on $(M,g)$ with $g(e_{i},e_{i})=%
\varepsilon _{i}=\pm 1$ for all indices $i=1,...,n.$

If ($M,g)$ is a compact Riemannian manifold, only parallel vector fields
define harmonic maps.

Next, for any real constant $\rho \neq 0$, let $\chi ^{\rho }(M)=\left\{
W\in \chi (M):\left\Vert W\right\Vert ^{2}=\rho \right\} .$We consider
vector fields $V\in $ $\chi ^{\rho }(M)$ which are critical points for the
energy functional $E\mid _{\chi ^{\rho }(M)}$, restricted to vector fields
of the same length. The Euler-Lagrange equations of this variational
condition yield that $V$ is a harmonic vector field if and only if%
\begin{equation}
\nabla ^{\ast }\nabla V\text{ is collinear to }V.  \label{NAMBLASTAR4}
\end{equation}

This characterization is well known in the Riemannian case ([3, 23, 25]).
Using same arguments in pseudo-Riemannian case, G. Calvaruso \cite%
{calvaruso1} proved that same result is still valid for vector fields of
constant length, if it is not lightlike.

Let $T_{1}M$ denote the unit tangent sphere bundle over $M$, and again by $%
g^{S}$ the metric induced on $T_{1}M$ by the Sasaki metric of $TM.$ Then, it
is shown that in \cite{ACP}, the map on $V:(M,g)\rightarrow (T_{1}M,g^{s})$
is harmonic if $V$ is a harmonic vector field and the additonial condition 
\begin{equation}
tr[R(\nabla .V,V).]=0  \label{TM}
\end{equation}%
is satisfied. G. Calvaruso \cite{calvaruso1} also investigated harmonicity
properties for left-invariant vector fields on three-dimensional Lorentzian
Lie groups, obtaining several classification results and new examples of
critical points of energy functionals.

In the non-compact case, conditions (\ref{NAMBLASTAR2}) and (\ref%
{NAMBLASTAR4}) are respectively taken as definitions of harmonic vector
fields and of vector fields defining harmonic maps.

Recently, D. Perrone proved that the characteristic vector field of an
almost cosymplectic three-manifold is minimal if and only if it is an
eigenvector of the Ricci operator.

\begin{theorem}
\label{HARMONIC0}Let ($M^{2n+1},\phi ,\xi ,\eta ,g)$ be an almost $\alpha $%
-para-Kenmotsu manifold. Then 
\begin{equation*}
\bar{\Delta}\xi =-\nabla ^{\ast }\nabla \xi =(2n\alpha ^{2}-tr(h^{2}))\xi
-Q\xi _{\mid \ker \eta }.
\end{equation*}
\end{theorem}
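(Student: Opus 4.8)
The plan is to compute the rough Laplacian $\nabla^{*}\nabla\xi$ directly from its definition (\ref{NAMBLASTAR}) using a $\phi$-basis, and to recognize the resulting expression in terms of the already-established curvature identities of Theorem \ref{R2}. Fix a point $P$ and a local $\phi$-basis $\{e_{i},\phi e_{i},\xi\}$ which we may assume is geodesic at $P$, i.e.\ $(\nabla_{e_{i}}e_{i})_{P}=(\nabla_{\phi e_{i}}\phi e_{i})_{P}=(\nabla_{\xi}\xi)_{P}=0$. Then at $P$ we have $\nabla^{*}\nabla\xi=-\sum_{i}\bigl(\nabla_{e_{i}}\nabla_{e_{i}}\xi-\nabla_{\phi e_{i}}\nabla_{\phi e_{i}}\xi\bigr)-\nabla_{\xi}\nabla_{\xi}\xi$, where the signs $\varepsilon_{i}$ come from $g(e_{i},e_{i})=1$, $g(\phi e_{i},\phi e_{i})=-1$, $g(\xi,\xi)=1$. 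Since $\nabla_{\xi}\xi=0$ identically, the last term drops, and $\nabla_{X}\nabla_{X}\xi = \nabla_{X}\bigl(\nabla_{X}\xi\bigr)$ can be reorganized, using $\nabla_{X,X}\xi = \nabla_X\nabla_X\xi - \nabla_{\nabla_X X}\xi$, into second covariant derivatives $\nabla_{e_{i},e_{i}}\xi$ etc., which is the natural bridge to curvature.

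Next I would bring in the curvature. The key observation is that $\sum_{i}\bigl(\nabla_{e_{i},e_{i}}\xi-\nabla_{\phi e_{i},\phi e_{i}}\xi\bigr)$ is, up to sign, essentially a trace of the second covariant derivative of $\xi$ over $\ker\eta$, and the Ricci identity relates $\nabla_{X,Y}\xi-\nabla_{Y,X}\xi$ to $R(X,Y)\xi$. A cleaner route: write $\nabla_{X}\xi=-\mathcal{A}X$ with $\mathcal{A}=\alpha\phi^{2}+\phi h$ by (\ref{7}), so $\nabla^{*}\nabla\xi = -\,\mathrm{tr}\,\bigl(X\mapsto \nabla^{2}_{\cdot,\cdot}\mathcal{A}X\bigr)$ reduces to controlling $\sum_i\varepsilon_i(\nabla_{e_i}\mathcal{A})e_i$-type terms; but the efficient device is to recall from the proof of Theorem \ref{R2} that $R(\xi,X)\xi=\alpha^{2}\phi^{2}X+2\alpha\phi hX-h^{2}X+\phi(\nabla_{\xi}h)X$, and that $S(\xi,\xi)=-2n\alpha^{2}+\mathrm{tr}\,h^{2}$, $S(X,\xi)=-2n\alpha^{2}\eta(X)+g(\mathrm{div}(\phi h),X)$. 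I would expand $\nabla_{e_{i}}\nabla_{e_{i}}\xi = -\nabla_{e_{i}}(\mathcal{A}e_{i})$ and group the terms so that the curvature-producing part assembles into $\sum_{i}\varepsilon_{i}R(e_{i},\xi)e_{i}$ (which is $-Q\xi$ by definition of the Ricci operator via the $\phi$-basis trace), the algebraic part assembles into $\bigl(2n\alpha^{2}-\mathrm{tr}(h^{2})\bigr)\xi$ using $\mathcal{A}^{2}$ and the identities $h\phi+\phi h=0$, $h\xi=0$, and the remaining divergence-type terms $\sum_i\varepsilon_i(\nabla_{e_i}\phi h)e_i = \mathrm{div}(\phi h)$ combine with the $\xi$-component of $Q\xi$ so that only the $\ker\eta$-part $Q\xi_{\mid\ker\eta}$ survives. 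The identity $S(\xi,\xi)=-2n\alpha^{2}+\mathrm{tr}\,h^{2}$ is precisely what removes the $\xi$-component: $-Q\xi = -Q\xi_{\mid\ker\eta}-S(\xi,\xi)\xi = -Q\xi_{\mid\ker\eta}+(2n\alpha^{2}-\mathrm{tr}\,h^{2})\xi$, which is exactly the claimed formula, so the whole computation is really a check that $\nabla^{*}\nabla\xi = -Q\xi$ holds for almost $\alpha$-para-Kenmotsu manifolds.

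So the heart of the matter is the identity $\nabla^{*}\nabla\xi=-Q\xi$. To prove it I would use the standard Weitzenböck-type bookkeeping: $-\nabla^{*}\nabla\xi=\sum_{i}\varepsilon_{i}\nabla_{e_{i},e_{i}}\xi$ (sum over the full $\phi$-basis including $\xi$), and since $\xi$ is a geodesic unit field with $\mathcal{L}_{\xi}g$ symmetric, the antisymmetrization identity $\nabla_{X,Y}\xi-\nabla_{Y,X}\xi=R(X,Y)\xi$ lets me replace $\nabla_{e_{i},e_{i}}\xi$ in a way that produces $\sum_{i}\varepsilon_{i}R(e_{i},\xi)e_{i}$ once one commutes one index past $\xi$; the extra terms that arise when swapping indices vanish by $\mathcal{A}\xi=0$, $\nabla_{\xi}\xi=0$ and $\mathcal{A}$ symmetric. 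Concretely, $\nabla_{X,\xi}\xi=\nabla_X\nabla_\xi\xi - \nabla_{\nabla_X\xi}\xi = -\mathcal A(\mathcal A X)$-type, and tracing this against the trace of $\nabla_{\xi,X}\xi$ feeds the Ricci operator. The main obstacle I anticipate is not conceptual but the careful bookkeeping of the $\phi$-basis signs $\varepsilon_{i}=\pm1$ together with the $\phi$-antisymmetry $h\phi=-\phi h$: several terms like $\mathrm{tr}(\alpha^{2}\phi^{4})$, $\mathrm{tr}(h^{2}\phi^{2})$, $\mathrm{tr}(\alpha\phi h\phi^{2})$ must be evaluated and shown to collapse to $2n\alpha^{2}-\mathrm{tr}(h^{2})$, and one must verify that all $\phi h$-divergence contributions and all terms involving $(\nabla_{\xi}h)$ conspire so that the final answer has no component outside $\ker\eta$ beyond the explicit scalar multiple of $\xi$ — i.e.\ that the identity is genuinely $-Q\xi$ with its $\xi$-part reshuffled via $S(\xi,\xi)$. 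Once $\nabla^{*}\nabla\xi=-Q\xi$ is in hand, the stated formula follows immediately by splitting $Q\xi=Q\xi_{\mid\ker\eta}+S(\xi,\xi)\xi$ and invoking (\ref{SZETAZETA}).
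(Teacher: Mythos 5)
Your proposal is correct and follows essentially the same route as the paper: compute $\nabla^{*}\nabla\xi$ over a local $\phi$-basis, substitute $\nabla\xi=\alpha\phi^{2}+\phi h$ so that the trace collapses to a $\mathrm{div}(\phi h)$ term plus a multiple of $\xi$, and then invoke (\ref{SZETA}) and (\ref{SZETAZETA}) to rewrite the result as $-Q\xi=(2n\alpha^{2}-\mathrm{tr}(h^{2}))\xi-Q\xi_{\mid\ker\eta}$. Your observation that the statement is equivalent to $\bar{\Delta}\xi=-Q\xi$ is exactly the content of the paper's computation, so the only remaining work is the sign bookkeeping you already flag.
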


\begin{proof}
Now, let $(e_{i},\phi e_{i},\xi ),i=1,...,n,$ be a local orthogonal $\phi $%
-basis. Then we obtain%
\begin{eqnarray*}
\bar{\Delta}\xi &=&-\dsum\limits_{i=1}^{n}(\nabla _{e_{i}}\nabla _{e_{i}}\xi
-\nabla _{\nabla _{e_{i}}e_{i}}\xi -\nabla _{\phi e_{i}}\nabla _{\phi
e_{i}}\xi +\nabla _{\nabla _{\phi e_{i}}\phi e_{i}}\xi ) \\
&=&-\dsum\limits_{i=1}^{n}((\nabla _{e_{i}}\nabla \xi )e_{i}-(\nabla _{\phi
e_{i}}\nabla \xi )\phi e_{i}) \\
&&\overset{(\ref{7})}{=}\dsum\limits_{i=1}^{n}((\nabla
_{e_{i}}A)e_{i}-(\nabla _{\phi e_{i}}A)\phi e_{i}) \\
&=&-div\phi h+2n\alpha ^{2}\xi
\end{eqnarray*}%
By (\ref{SZETA}) and (\ref{SZETAZETA}) we get%
\begin{equation*}
\bar{\Delta}\xi =(2n\alpha ^{2}-tr(h^{2}))\xi -Q\xi _{\mid \ker \eta }.
\end{equation*}%
This ends the proof.
\end{proof}

As an immediate consequence of Theorem \ref{HARMONIC0} we obtain following
theorem.

\begin{theorem}
\label{HARMONIC}Let ($M^{2n+1},\phi ,\xi ,\eta ,g)$ be an almost $\alpha $%
-para-Kenmotsu manifold. $\xi $ is a harmonic vector field if and only if
the characteristic vector field is an eigenvector of the Ricci operator.
\end{theorem}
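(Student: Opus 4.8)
The plan is to derive this as a direct corollary of Theorem~\ref{HARMONIC0}. That theorem already provides the clean formula
\begin{equation*}
\bar{\Delta}\xi = -\nabla^{\ast}\nabla\xi = (2n\alpha^{2}-\operatorname{tr}(h^{2}))\xi - Q\xi_{\mid \ker\eta},
\end{equation*}
so the whole argument reduces to unpacking what ``$\xi$ is a harmonic vector field'' means in this setting and matching it against this expression. Recall from Section~8 that, since $\xi$ has constant length (it is unit), the relevant criterion is~\eqref{NAMBLASTAR4}: $\xi$ is a harmonic vector field precisely when $\nabla^{\ast}\nabla\xi$ is collinear to $\xi$, i.e.\ $\bar{\Delta}\xi = \lambda\xi$ for some function $\lambda$ on $M$.

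First I would decompose $Q\xi$ along $\xi$ and along $\ker\eta$: write $Q\xi = g(Q\xi,\xi)\xi + Q\xi_{\mid\ker\eta}$, so that $Q\xi_{\mid\ker\eta} = Q\xi - S(\xi,\xi)\xi$. Substituting this into the formula of Theorem~\ref{HARMONIC0} gives
\begin{equation*}
\bar{\Delta}\xi = (2n\alpha^{2}-\operatorname{tr}(h^{2}) + S(\xi,\xi))\xi - Q\xi,
\end{equation*}
and using $S(\xi,\xi) = -2n\alpha^{2} + \operatorname{tr}(h^{2})$ from~\eqref{SZETAZETA}, the scalar coefficient collapses to zero, yielding simply $\bar{\Delta}\xi = -Q\xi + (\text{something})\xi$—more precisely $-\nabla^{\ast}\nabla\xi = -Q\xi_{\mid\ker\eta} + (2n\alpha^2 - \operatorname{tr}(h^2))\xi$, which is the stated formula with the $\ker\eta$ part made explicit. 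The key observation is that $Q\xi_{\mid\ker\eta}$ is, by construction, the component of $Q\xi$ orthogonal to $\xi$.

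Now the equivalence follows immediately. If $\xi$ is harmonic, then $\bar{\Delta}\xi = \lambda\xi$ for some $\lambda$; comparing with the formula forces $Q\xi_{\mid\ker\eta} = (2n\alpha^2 - \operatorname{tr}(h^2) - \lambda)\xi$. But the left side lies in $\ker\eta$ and the right side is collinear to $\xi$, so both must vanish; hence $Q\xi_{\mid\ker\eta}=0$, which says exactly that $Q\xi$ has no component transverse to $\xi$, i.e.\ $Q\xi = S(\xi,\xi)\xi$, so $\xi$ is an eigenvector of the Ricci operator. Conversely, if $Q\xi = f\xi$ for some function $f$, then $Q\xi_{\mid\ker\eta}=0$ and the formula reduces to $\bar{\Delta}\xi = (2n\alpha^2 - \operatorname{tr}(h^2))\xi$, which is collinear to $\xi$; by~\eqref{NAMBLASTAR4} this means $\xi$ is a harmonic vector field.

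I do not anticipate any real obstacle here: the substantive content—the computation of $\bar{\Delta}\xi$ via $\nabla\xi = -\mathcal{A} = \alpha\phi^2 + \phi h$ and the Ricci identities~\eqref{SZETA}, \eqref{SZETAZETA}—has already been carried out in Theorem~\ref{HARMONIC0} and Theorem~\ref{R2}. The only point requiring a moment's care is the orthogonal-decomposition argument: one must note that $\eta(\bar{\Delta}\xi) = 0$ (which follows since $g(\nabla^{\ast}\nabla\xi,\xi)$ can be rewritten, using $\xi$ unit, in terms of $\|\nabla\xi\|^2$ and is automatically accounted for), so that the ``collinear to $\xi$'' condition is equivalent to the $\ker\eta$-projection $Q\xi_{\mid\ker\eta}$ vanishing. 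That is genuinely the whole proof, so the write-up will be short.
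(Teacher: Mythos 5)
Your proof is correct and is exactly the argument the paper intends: the paper presents Theorem \ref{HARMONIC} as an immediate consequence of Theorem \ref{HARMONIC0} with no further proof, and your third paragraph supplies precisely that missing step (the $\ker\eta$-component of $\bar{\Delta}\xi$ is $-Q\xi_{\mid\ker\eta}$, so collinearity with $\xi$ forces it to vanish, which is exactly $Q\xi=S(\xi,\xi)\xi$). One small caveat: the parenthetical claim $\eta(\bar{\Delta}\xi)=0$ in your closing paragraph is false in general --- Theorem \ref{HARMONIC0} gives $\eta(\bar{\Delta}\xi)=2n\alpha^{2}-\operatorname{tr}(h^{2})$ --- but this remark is never used, since the argument only needs that $Q\xi_{\mid\ker\eta}$ lies in $\ker\eta$ while $\xi$ does not, which you already establish.
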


\section{Conformal and $D$-homothetic deformations.}

Let $M^{2n+1}$ be an almost $\alpha $-paracosymplectic manifold and $(\phi
,\xi ,\eta ,g)$ be almost $\alpha $-paracosymplectic structure. Let $%
\mathcal{R}_{\eta }(M^{2n+1})$ be the set of the locally defined smooth
functions $f$ on $M^{2n+1}$ such that $df\wedge \eta =0$, whenever $df$ is
defined.

Let $M^{2n+1}$ be an almost paracontact metric manifold. Let $f$ be a
function on $M^{2n+1}$, $f>0$ everywhere. Consider a deformation of the
structure 
\begin{equation}
\phi \mapsto \phi ^{\prime }=\phi ,\quad \xi \mapsto \xi ^{\prime }=\frac{1}{%
f}\xi ,\quad \eta \mapsto \eta ^{\prime }=f\eta ,\quad g\mapsto g^{\prime
}=fg,
\end{equation}%
we call $(\phi ^{\prime },\xi ^{\prime },\eta ^{\prime },g^{\prime })$ for
rather obvious reasons conformal deformation of $(\phi ,\xi ,\eta ,g)$.
Respectively we say that almost paracontact metric manifold $(M^{2n+1},\phi
^{\prime },\xi ^{\prime },\eta ^{\prime })$ is conformal to $(M^{2n+1},\phi
,\xi ,\eta ,g)$. Almost paracontact metric manifolds $(M^{2n+1},\phi ,\xi
,\eta ,g)$ and $(M^{2n+1},\phi ^{\prime },\xi ^{\prime },\eta ^{\prime
},g^{\prime })$ are called locally conformal if there is an open covering $%
(U_{i})_{i\in I}$, $M^{2n+1}=\bigcup U_{i}$, such that almost paracontact
metric manifolds $(U_{i},\phi |_{U_{i}},\xi |_{U_{i}},\eta
|_{U_{i}},g|_{U_{i}})$ and $(U_{i},\phi ^{\prime }|_{U_{i}},\xi ^{\prime
}|_{U_{i}},\eta ^{\prime }|_{U_{i}},g^{\prime }|_{U_{i}})$ are conformal.

\begin{theorem}
Arbitrary almost $\alpha $-paracosymplectic manifold $(M^{2n+1},\phi ,\xi
,\eta ,g)$, $n\geqslant 2$ is locally conformal to an almost
paracosymplectic manifold. In the other words near each point $p\in M^{2n+1}$
there is defined function $u$, such that a structure $(\phi ^{\prime },\xi
^{\prime },\eta ^{\prime },g^{\prime })$ 
\begin{equation}
\phi ^{\prime }=\phi ,\quad \xi ^{\prime }=e^{2u}\xi ,\quad \eta ^{\prime
}=e^{-2u}\eta ,\quad g^{\prime }=e^{-2u}g,  \label{confd}
\end{equation}%
is almost paracosymplectic. The function $u$ is unique up to additive
constant and $\alpha \eta =du$.
\end{theorem}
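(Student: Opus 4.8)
The plan is to verify directly that the deformed structure $(\phi',\xi',\eta',g')$ given by \eqref{confd} is almost paracontact metric and then to check the two defining conditions of an almost paracosymplectic structure, namely $d\eta'=0$ and $d\Phi'=0$. First I would fix $p$ and use Proposition~\ref{properties1}$(vii)$: since $n\geqslant 2$ we have $d\alpha=f\eta$, which is exactly the statement that $\alpha\eta$ is closed (apply $d$ to $d\alpha=f\eta$ and use $d\eta=0$ to get $df\wedge\eta=0$, hence $d(\alpha\eta)=d\alpha\wedge\eta+\alpha\,d\eta=f\eta\wedge\eta=0$). Therefore on a contractible neighbourhood of $p$ there is a function $u$ with $du=\alpha\eta$, unique up to an additive constant; this is the $u$ in the statement.

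Next I would check that \eqref{confd} really defines an almost paracontact metric structure. The conditions $\eta'(\xi')=1$ and $\phi'^2=\operatorname{Id}-\eta'\otimes\xi'$ are immediate from $\phi'=\phi$, $\eta'\otimes\xi'=(e^{-2u}\eta)\otimes(e^{2u}\xi)=\eta\otimes\xi$; the splitting of $\ker\eta'=\ker\eta$ into $\pm1$-eigendistributions of $\phi'=\phi$ is unchanged; and the compatibility $g'(\phi'X,\phi'Y)=e^{-2u}g(\phi X,\phi Y)=e^{-2u}(-g(X,Y)+\eta(X)\eta(Y))=-g'(X,Y)+\eta'(X)\eta'(Y)$ holds. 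The fundamental $2$-form transforms as $\Phi'(X,Y)=g'(\phi'X,Y)=e^{-2u}g(\phi X,Y)=e^{-2u}\Phi$.

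Then comes the computation of the two exterior derivatives. For $\eta'=e^{-2u}\eta$ we get $d\eta'=e^{-2u}(-2\,du\wedge\eta+d\eta)=e^{-2u}(-2\alpha\eta\wedge\eta+0)=0$, using $du=\alpha\eta$ and $d\eta=0$. For $\Phi'=e^{-2u}\Phi$ we get $d\Phi'=e^{-2u}(-2\,du\wedge\Phi+d\Phi)=e^{-2u}(-2\alpha\eta\wedge\Phi+2\alpha\eta\wedge\Phi)=0$, using $d\Phi=2\alpha\eta\wedge\Phi$ from the definition of almost $\alpha$-paracosymplectic. Hence $(\phi',\xi',\eta',g')$ is almost paracosymplectic.

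The only genuine subtlety — and the step I would flag as the main point to be careful about — is the very first one: producing the function $u$. This relies essentially on $n\geqslant 2$ (so that Lemma~\ref{podzero} applies in Proposition~\ref{properties1}$(vii)$ to give $d\alpha=f\eta$) and on the closedness of $\alpha\eta$; in dimension $3$ the argument breaks down because $d\alpha$ need not be proportional to $\eta$. Everything after that is a routine transformation computation. Finally, uniqueness of $u$ up to an additive constant is just the uniqueness of a primitive of the closed $1$-form $\alpha\eta$ on a connected neighbourhood, and the relation $\alpha\eta=du$ is built into the construction; note also that the conformal factor relating $g$ and $g'$ is $e^{-2u}$, consistent with the "conformal deformation" terminology introduced just before the theorem (here with $f=e^{-2u}\in\mathcal{R}_\eta(M^{2n+1})$).
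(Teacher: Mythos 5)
Your argument follows exactly the paper's route: Proposition \ref{properties1}$(vii)$ gives closedness of $\alpha\eta$ for $n\geqslant 2$, the Poincar\'e lemma produces $u$ with $du=\alpha\eta$, and the two exterior-derivative computations for $d\eta'$ and $d\Phi'$ are the same as in the paper. The one piece you have not actually established is the necessity half of the theorem's last sentence: the claim is that \emph{any} function $u$ for which $(\phi',\xi',\eta',g')$ is almost paracosymplectic must satisfy $du=\alpha\eta$, and the uniqueness up to an additive constant is a consequence of that. You only show that a primitive of $\alpha\eta$ works and that primitives of a fixed closed form differ by constants; a priori some unrelated $u$ could also make $d\Phi'$ vanish. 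The paper closes this by writing $d\Phi'=2e^{-2u}(-du+\alpha\eta)\wedge\Phi$ and invoking Lemma \ref{podzero} (a $1$-form $\beta$ with $\beta\wedge\Phi=0$ must vanish when $\Phi$ has maximal rank and the dimension is at least $5$) to conclude that $d\Phi'=0$ holds \emph{if and only if} $du=\alpha\eta$. Adding that one observation — which is the same maximal-rank lemma already used to prove part $(vii)$ of Proposition \ref{properties1} — makes your argument complete; everything else, including the verification that the deformation is again an almost paracontact metric structure, is correct and in fact slightly more thorough than the paper's own write-up.
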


\begin{proof}
Let $u$ be a local function defined near a given point $p\in M^{2n+1}$. Then
we directly verify that the fundamental form of the structure $(\varphi
^{\prime },\xi ^{\prime },\eta ^{\prime },g^{\prime })$ is closed if and
only if $du=\alpha \eta $. Indeed $\Phi ^{\prime }(X,Y)=g^{\prime }(\phi
^{\prime }X,Y)=e^{-2u}g(\phi X,Y)=$ $e^{-2u}\Phi (X,Y$ and 
\begin{equation}
d\Phi ^{\prime }=-2e^{-2u}du\wedge \Phi +e^{-2u}d\Phi =2e^{-2u}(-du+\alpha
\eta )\wedge \Phi ,
\end{equation}%
by the Lemma \textbf{\ref{podzero}} $d\Phi ^{\prime }$ vanishes everywhere
only if the 1-form $-du+\alpha \eta $ is identically zero. Notice that in
the case when dimension of $M^{2n+1}$ is $\geqslant 5$, such function always
locally exists for according to the Proposition \textbf{\ref{properties1}}$%
(vii)$ the 1-form $\beta =\alpha \eta $ is closed everywhere $d\beta
=d\alpha \wedge \eta =f\eta \wedge \eta =0$, so from the Poincare lemma we
have a local solution. Similarly we obtain $d\eta ^{\prime
}=-2e^{-2u}du\wedge \eta =-2e^{-2u}\alpha \eta \wedge \eta =0$. Thus let $U$
be an open set, $p\in U$, such that the function $u$ is defined on $U$, then
the manifold $(U,\phi ^{\prime },\xi ^{\prime },\eta ^{\prime },g^{\prime })$
is almost para-cosymplectic.
\end{proof}

Consider a $D_{\gamma ,\beta }$-homothetic deformation of $(\phi ,\xi ,\eta
,g)$ into an almost paracontact metric structure $(\tilde{\phi},\tilde{\xi},%
\tilde{\eta},\tilde{g})$ defined as%
\begin{equation}
\tilde{\phi}=\phi ,\text{ \ \ }\tilde{\xi}=\frac{1}{\beta }\xi ,\text{ }%
\tilde{\eta}=\beta \eta ,\text{ \ }\tilde{g}=\gamma g+(\beta ^{2}-\gamma
)\eta \otimes \eta ,  \label{deformation}
\end{equation}%
where $\gamma $ is positive constant and $\beta \in \mathcal{R}_{\eta
}(M^{2n+1}),$ $\beta \neq 0$ at any point of $M^{2n+1}.$ Since $d\beta
\wedge \eta =0,$ it follows that%
\begin{equation*}
d\tilde{\eta}=d\beta \wedge \eta +\beta d\eta =0,
\end{equation*}%
and moreover $d\tilde{\Phi}=2(\frac{\gamma }{\beta })\tilde{\eta}\wedge 
\tilde{\Phi},$ since fundamental two forms $\Phi $, $\tilde{\Phi}$ are
related by $\tilde{\Phi}=\gamma \Phi .$ So, deformed structure $(\tilde{\phi}%
,\tilde{\xi},\tilde{\eta},\tilde{g})$ can be writen as%
\begin{equation*}
\tilde{\Phi}=\gamma \Phi ,\text{ \ }d\tilde{\eta}=0,\text{ }d\tilde{\Phi}=2%
\frac{\gamma }{\beta }\tilde{\eta}\wedge \tilde{\Phi}\text{\ ,}
\end{equation*}%
for $d\beta =d\beta (\xi )\eta $ and $\frac{\gamma }{\beta }\in \mathcal{R}%
_{\eta }(M^{2n+1}).$

Thus a $D_{\gamma ,\beta }$-homothetic deformation of an almost $\alpha $%
-paracosymplectic structure $(\phi ,\xi ,\eta ,g)$ gives a new almost $(%
\frac{\gamma }{\beta })$-paracosymplectic structure $(\tilde{\phi},\tilde{\xi%
},\tilde{\eta},\tilde{g})$ on the same manifold.

Following the definition of locally conformal almost paracontact metric
manifolds we define the notion of locally $D_{\gamma ,\beta }$-homothetic
almost $\alpha $-paracosymplectic manifolds. By the Proposition \textbf{\ref%
{properties1}}$(vii)$ we have the following

\begin{theorem}
\label{Dhomev} An almost $\alpha $-paracosymplectic manifold $(M^{2n+1},\phi
,\xi ,\eta ,g)$, $n\geqslant 2$ is locally $D_{\gamma ,\alpha }$-homothetic
to an almost para-Kenmotsu manifold on the set $U:\alpha \neq 0$.
\end{theorem}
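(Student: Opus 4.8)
The plan is to produce the required structure explicitly: take $\gamma$ to be any fixed positive constant and apply the $D_{\gamma ,\beta }$-homothetic deformation (\ref{deformation}) with the specific choice $\beta =\alpha $. First I would verify that this choice is admissible on $U$. Admissibility asks that $\alpha \in \mathcal{R}_{\eta }(M^{2n+1})$ and that $\alpha $ be nowhere vanishing. The first is exactly Proposition \ref{properties1}$(vii)$: since $n\geqslant 2$ one has $d\alpha =f\eta $, hence $d\alpha \wedge \eta =f\,\eta \wedge \eta =0$, so $\alpha $ lies in $\mathcal{R}_{\eta }(M^{2n+1})$; the second holds precisely on $U$, where $\alpha \neq 0$ at every point. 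Thus $\beta =\alpha $ is a legitimate deformation parameter on $U$, and this is the one place where the dimension hypothesis $n\geqslant 2$ is genuinely used.

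Next I would identify the type of the deformed structure $(\tilde{\phi},\tilde{\xi},\tilde{\eta},\tilde{g})$. As in the discussion preceding the theorem, $\tilde{\phi}=\phi $, $\tilde{\eta}=\beta \eta $, and $\tilde{\Phi}=\gamma \Phi $ (the $\eta \otimes \eta $ term in $\tilde{g}$ contributes nothing to $\tilde{\Phi}$ because $\eta \circ \phi =0$). Hence $d\tilde{\eta}=d\beta \wedge \eta +\beta \,d\eta =0$ since $\beta \in \mathcal{R}_{\eta }$ and $d\eta =0$, while, using that $\gamma $ is constant together with $d\Phi =2\alpha \,\eta \wedge \Phi $ and $\tilde{\eta}\wedge \tilde{\Phi}=\gamma \beta \,\eta \wedge \Phi $, we get $d\tilde{\Phi}=\gamma \,d\Phi =2\gamma \alpha \,\eta \wedge \Phi =\frac{2\alpha }{\beta }\,\tilde{\eta}\wedge \tilde{\Phi}$. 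So the $D_{\gamma ,\beta }$-deformation turns an almost $\alpha $-paracosymplectic structure into an almost $(\alpha /\beta )$-paracosymplectic one, and with $\beta =\alpha $ this reads $d\tilde{\Phi}=2\,\tilde{\eta}\wedge \tilde{\Phi}$, with the constant $1$; that is, $(\tilde{\phi},\tilde{\xi},\tilde{\eta},\tilde{g})$ is an almost para-Kenmotsu structure on $U$.

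Finally I would observe that the word \emph{locally} costs nothing here: the open set $U$, covered trivially by itself, already carries the deformation $D_{\gamma ,\alpha }$ of $(\phi |_{U},\xi |_{U},\eta |_{U},g|_{U})$ into the almost para-Kenmotsu structure just constructed, so by the definition of \emph{locally $D_{\gamma ,\alpha }$-homothetic} the manifold $M^{2n+1}$ is locally $D_{\gamma ,\alpha }$-homothetic to an almost para-Kenmotsu manifold on $U$.

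There is no real computational obstacle; the substance is the pair of remarks that $\alpha $ is an $\mathcal{R}_{\eta }$-function (which needs no more than $n\geqslant 2$, via Proposition \ref{properties1}$(vii)$) and that the deformed structure function equals $\alpha /\beta $, so that $\beta =\alpha $ normalizes it to $1$. The only reason to restrict to $U$ is the requirement that the parameter $\beta =\alpha $ be nowhere zero; outside $U$ there is nothing to normalize. If one preferred a different normalizing constant $c\neq 0$, the choice $\beta =\alpha /c$ would instead yield an almost $c$-para-Kenmotsu structure.
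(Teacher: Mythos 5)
Your proposal is correct and follows exactly the route the paper intends: Proposition \ref{properties1}$(vii)$ guarantees $\alpha \in \mathcal{R}_{\eta }(M^{2n+1})$ when $n\geqslant 2$, so $\beta =\alpha $ is an admissible deformation parameter on $U$, and the deformed structure function is $\alpha /\beta =1$, i.e.\ the deformed structure is almost para-Kenmotsu. Note that your computation $d\tilde{\Phi}=2(\alpha /\beta )\,\tilde{\eta}\wedge \tilde{\Phi}$ is the correct one (the paper's displayed factor $\gamma /\beta $ is a slip, since $d\tilde{\Phi}=\gamma \,d\Phi =2\alpha \gamma \,\eta \wedge \Phi $ and $\tilde{\eta}\wedge \tilde{\Phi}=\beta \gamma \,\eta \wedge \Phi $), and only with $\alpha /\beta $ does the statement of the theorem, with arbitrary constant $\gamma $, actually follow.
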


\begin{proposition}
\label{nambla}Let $(\tilde{\phi},\tilde{\xi},\tilde{\eta},\tilde{g})$ be an
almost $\alpha $-paracosymplectic structure obtained from $(\phi ,\xi ,\eta
,g)$ by a $D_{\gamma ,\beta }$-homothetic deformation. Then we have the
following relationship between the Levi-Civita connections $\tilde{\nabla}$%
and $\nabla .$%
\begin{equation}
\tilde{\nabla}_{X}Y=\nabla _{X}Y-\left( \frac{\beta ^{2}-\gamma }{\beta ^{2}}%
\right) g(\mathcal{A}X,Y)\xi +\frac{d\beta (\xi )}{\beta }\eta (Y)\eta
(X)\xi .  \label{i00}
\end{equation}
\end{proposition}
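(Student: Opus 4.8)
The plan is to compute the difference tensor $D(X,Y):=\tilde{\nabla}_X Y-\nabla_X Y$ directly. Since $\nabla$ and $\tilde{\nabla}$ are both Levi--Civita connections, each is torsion-free, so $D$ is a \emph{symmetric} $(1,2)$-tensor field, and the fact that $\tilde{\nabla}$ is metric for $\tilde g$ is equivalent to $(\nabla_X\tilde g)(Y,Z)=\tilde g(D(X,Y),Z)+\tilde g(Y,D(X,Z))$. Permuting $X,Y,Z$ cyclically and using the symmetry of $D$ and of $\tilde g$ yields the Koszul-type identity
\begin{equation*}
2\tilde g(D(X,Y),Z)=(\nabla_X\tilde g)(Y,Z)+(\nabla_Y\tilde g)(Z,X)-(\nabla_Z\tilde g)(X,Y),
\end{equation*}
so the whole computation reduces to evaluating $\nabla\tilde g$ and then inverting $\tilde g$.

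First I would compute $\nabla\tilde g$. Writing $\tilde g=\gamma g+(\beta^2-\gamma)\eta\otimes\eta$ with $\gamma$ constant and $\nabla g=0$, and using $(\nabla_X\eta)(Y)=g(\nabla_X\xi,Y)=-g(\mathcal{A}X,Y)$ (as in the proof of Proposition \ref{properties1}) together with $\mathcal{A}=-\nabla\xi$, one gets
\begin{equation*}
(\nabla_X\tilde g)(Y,Z)=X(\beta^2)\,\eta(Y)\eta(Z)-(\beta^2-\gamma)\bigl(g(\mathcal{A}X,Y)\eta(Z)+g(\mathcal{A}X,Z)\eta(Y)\bigr).
\end{equation*}
Here the hypothesis $\beta\in\mathcal{R}_\eta(M^{2n+1})$, i.e. $d\beta\wedge\eta=0$, is essential: it forces $d\beta=d\beta(\xi)\,\eta$, hence $X(\beta^2)=2\beta\,d\beta(\xi)\,\eta(X)$.

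Next I would substitute this expression into the Koszul-type identity. In the cyclic sum the triple-$\eta$ term survives with coefficient $\beta\,d\beta(\xi)$; in the $\mathcal{A}$-part, the self-adjointness of $\mathcal{A}$ (Proposition \ref{properties1}$(ii)$) makes the $\eta(X)$- and $\eta(Y)$-coefficients cancel while the $\eta(Z)$-coefficient doubles, leaving
\begin{equation*}
\tilde g(D(X,Y),Z)=\bigl(\beta\,d\beta(\xi)\,\eta(X)\eta(Y)-(\beta^2-\gamma)g(\mathcal{A}X,Y)\bigr)\eta(Z).
\end{equation*}
Finally, since $\tilde g(\xi,\cdot)=\gamma\eta+(\beta^2-\gamma)\eta=\beta^2\eta$, the $\tilde g$-metric dual of the $1$-form $\eta$ is $\beta^{-2}\xi$; therefore $D(X,Y)=\beta^{-2}\bigl(\beta\,d\beta(\xi)\,\eta(X)\eta(Y)-(\beta^2-\gamma)g(\mathcal{A}X,Y)\bigr)\xi$, which is exactly the claimed formula (\ref{i00}).

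The computation is essentially routine once the setup is fixed; the only place demanding care is the bookkeeping in the cyclic sum, where one must track which $\mathcal{A}$-terms cancel and remember to invoke both the symmetry of $\mathcal{A}$ and the consequence $d\beta=d\beta(\xi)\eta$ of $\beta\in\mathcal{R}_\eta(M^{2n+1})$. I do not anticipate a genuine obstacle.
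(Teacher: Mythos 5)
Your proof is correct. It is a Koszul-type argument, as is the paper's, but organized differently: the paper writes out the bracket form of the Koszul formula for $2\tilde{g}(\tilde{\nabla}_{X}Y,Z)$, expands $\tilde{g}=\gamma g+(\beta ^{2}-\gamma )\eta \otimes \eta$, and then has to disentangle $\eta (\tilde{\nabla}_{X}Y)$ by setting $Z=\xi$ in an auxiliary identity and substituting back — effectively solving a small linear system for $\tilde{\nabla}_{X}Y$. You instead work with the difference tensor $D=\tilde{\nabla}-\nabla$ and the standard identity $2\tilde{g}(D(X,Y),Z)=(\nabla _{X}\tilde{g})(Y,Z)+(\nabla _{Y}\tilde{g})(Z,X)-(\nabla _{Z}\tilde{g})(X,Y)$, which reduces everything to the single computation of $\nabla \tilde{g}$ and one inversion of $\tilde{g}$ via $\tilde{g}(\xi ,\cdot )=\beta ^{2}\eta$. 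This buys a cleaner, one-pass derivation and makes explicit exactly where the hypotheses enter: the symmetry of $\mathcal{A}$ (Proposition \ref{properties1}$(ii)$) kills the $\eta (X)$- and $\eta (Y)$-coefficients in the cyclic sum, and $d\beta \wedge \eta =0$ gives $d\beta =d\beta (\xi )\eta$, producing the $\frac{d\beta (\xi )}{\beta }\eta (X)\eta (Y)\xi$ term. All intermediate formulas you state check out, and the final expression agrees with (\ref{i00}).
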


\begin{proof}
By Kozsul's formula we have%
\begin{eqnarray*}
2\tilde{g}(\tilde{\nabla}_{X}Y,Z) &=&X\tilde{g}(Y,Z)+Y\tilde{g}(X,Z)-Z\tilde{%
g}(X,Y) \\
&&+\tilde{g}(\left[ X,Y\right] ,Z)+\tilde{g}(\left[ Z,X\right] ,Y)+\tilde{g}(%
\left[ Z,Y\right] ,X),
\end{eqnarray*}%
for any vector fields $X,Y,Z.$ By using $\tilde{g}=\gamma g+(\beta
^{2}-\gamma )\eta \otimes \eta $ in the last equation we obtain%
\begin{eqnarray}
2\tilde{g}(\tilde{\nabla}_{X}Y,Z) &=&2\gamma g(\nabla _{X}Y,Z)+2\beta d\beta
(\xi )\eta (X)\eta (Y)\eta (Z)  \label{i0} \\
&&+2(\beta ^{2}-\gamma )\left[ \eta (\nabla _{X}Y)\eta (Z)+g(Y,\nabla
_{X}\xi )\eta (Z)\right] .  \notag
\end{eqnarray}%
Moreover we have%
\begin{equation}
2\tilde{g}(\tilde{\nabla}_{X}Y,Z)=2\gamma g(\tilde{\nabla}_{X}Y,Z)+2(\beta
^{2}-\gamma )\eta (\tilde{\nabla}_{X}Y)\eta (Z)  \label{i1}
\end{equation}%
and%
\begin{equation}
\eta (\tilde{\nabla}_{X}Y)=\frac{1}{\beta ^{2}}\tilde{g}(\tilde{\nabla}%
_{X}Y,\xi ).  \label{i2}
\end{equation}%
Using (\ref{i1}) and (\ref{i2}) in (\ref{i0}) we find%
\begin{eqnarray}
\gamma g(\tilde{\nabla}_{X}Y,Z)+\frac{(\beta ^{2}-\gamma )}{\beta ^{2}}%
\tilde{g}(\tilde{\nabla}_{X}Y,\xi )\eta (Z) &=&\gamma g(\nabla
_{X}Y,Z)+\beta d\beta (\xi )\eta (X)\eta (Y)\eta (Z)  \notag \\
&&+(\beta ^{2}-\gamma )\left[ \eta (\nabla _{X}Y)\eta (Z)+g(Y,\nabla _{X}\xi
)\eta (Z)\right] .  \label{i3}
\end{eqnarray}%
Setting $Z=\xi $ in (\ref{i0}) we get%
\begin{eqnarray}
\tilde{g}(\tilde{\nabla}_{X}Y,\xi ) &=&\gamma g(\nabla _{X}Y,\xi )+\beta
d\beta (\xi )\eta (X)\eta (Y)  \label{i4} \\
&&+(\beta ^{2}-\gamma )\left[ \eta (\nabla _{X}Y)+g(Y,\nabla _{X}\xi )\right]
.  \notag
\end{eqnarray}%
Using (\ref{i4}) in (\ref{i3}), by a direct computation we have (\ref{i00}).
\end{proof}

\begin{proposition}
\label{relations}Let $(\tilde{\phi},\tilde{\xi},\tilde{\eta},\tilde{g})$ be
an almost $\alpha $-paracosymplectic structure obtained from $(\phi ,\xi
,\eta ,g)$ by a $D_{\gamma ,\beta }$-homothetic deformation. Then the
following identities hold:%
\begin{equation}
\mathcal{A}^{^{\prime }}X=\frac{1}{\beta }\mathcal{A}X,  \label{i5}
\end{equation}%
\begin{equation}
\tilde{h}X=\frac{1}{\beta }hX,  \label{i6}
\end{equation}%
\begin{equation}
\tilde{R}(X,Y)\tilde{\xi}=\frac{1}{\beta }R(X,Y)\xi +\frac{1}{\beta ^{2}}%
d\beta (\xi )\left[ \eta (X)\mathcal{A}Y-\eta (Y)\mathcal{A}X\right] ,
\label{i7}
\end{equation}%
for any vector fields $X,Y,Z.$
\end{proposition}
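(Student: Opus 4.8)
The plan is to derive all three identities from Proposition \ref{nambla}. Write the Levi-Civita connections as $\tilde\nabla_X Y=\nabla_X Y+T(X,Y)$, where by (\ref{i00})
\[
T(X,Y)=-\left(\frac{\beta^{2}-\gamma}{\beta^{2}}\right)g(\mathcal{A}X,Y)\xi+\frac{d\beta(\xi)}{\beta}\eta(X)\eta(Y)\xi ;
\]
since $\mathcal{A}$ is self-adjoint and the second summand is manifestly symmetric, $T$ is a symmetric $(1,2)$-tensor, as it must be because both connections are torsion-free. Evaluating $T$ at $Y=\xi$ and using $\mathcal{A}\xi=0$ gives $T(X,\xi)=\frac{d\beta(\xi)}{\beta}\eta(X)\xi$, hence $\tilde\nabla_X\xi=-\mathcal{A}X+\frac{d\beta(\xi)}{\beta}\eta(X)\xi$. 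Differentiating $\tilde\xi=\frac1\beta\xi$ and using that $\beta\in\mathcal{R}_{\eta}(M^{2n+1})$ means $d\beta=d\beta(\xi)\,\eta$, the two $\xi$-terms cancel and one obtains $\mathcal{A}'X=-\tilde\nabla_X\tilde\xi=\frac1\beta\mathcal{A}X$, which is (\ref{i5}). For (\ref{i6}) one notes that the deformed structure is almost $(\gamma/\beta)$-paracosymplectic, so formula (\ref{hh}) applies to it and gives $\tilde h=\frac12(\mathcal{A}'\tilde\phi-\tilde\phi\mathcal{A}')$; since $\tilde\phi=\phi$ and $\mathcal{A}'=\frac1\beta\mathcal{A}$ this equals $\frac1\beta\cdot\frac12(\mathcal{A}\phi-\phi\mathcal{A})=\frac1\beta h$.

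For (\ref{i7}) I would use the standard curvature-difference identity
\[
\tilde R(X,Y)Z=R(X,Y)Z+(\nabla_X T)(Y,Z)-(\nabla_Y T)(X,Z)+T(X,T(Y,Z))-T(Y,T(X,Z))
\]
with $Z=\xi$, and then invoke tensoriality of curvature to write $\tilde R(X,Y)\tilde\xi=\frac1\beta\tilde R(X,Y)\xi$ at the very end. Put $k:=\frac{d\beta(\xi)}{\beta}$, so $T(X,\xi)=k\,\eta(X)\xi$. Then $T(X,T(Y,\xi))=k^{2}\eta(X)\eta(Y)\xi$ is symmetric in $X,Y$, so the two quadratic terms cancel. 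Expanding $(\nabla_X T)(Y,\xi)=\nabla_X(T(Y,\xi))-T(\nabla_X Y,\xi)-T(Y,\nabla_X\xi)$, the summand $T(Y,\nabla_X\xi)=T(Y,-\mathcal{A}X)=-\left(\frac{\beta^{2}-\gamma}{\beta^{2}}\right)g(\mathcal{A}X,\mathcal{A}Y)\xi$ (using $\eta(\mathcal{A}X)=0$) is again symmetric in $X,Y$ and cancels under antisymmetrization. Using $(\nabla_X\eta)(Y)=-g(\mathcal{A}X,Y)$, the surviving part of $\nabla_X(T(Y,\xi))-T(\nabla_X Y,\xi)$ is $X(k)\eta(Y)\xi-k\,g(\mathcal{A}X,Y)\xi-k\,\eta(Y)\mathcal{A}X$; antisymmetrizing, the middle term dies because $\mathcal{A}$ is self-adjoint, and one is left with
\[
\tilde R(X,Y)\xi=R(X,Y)\xi+(dk\wedge\eta)(X,Y)\,\xi+k\bigl(\eta(X)\mathcal{A}Y-\eta(Y)\mathcal{A}X\bigr).
\]

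The one genuinely nontrivial point — the main obstacle — is to dispose of the term $(dk\wedge\eta)(X,Y)\,\xi$, that is, to check $dk\wedge\eta=0$. This follows because applying $d$ to $d\beta=d\beta(\xi)\,\eta$ and using $d\eta=0$ gives $d(d\beta(\xi))\wedge\eta=0$, and then a one-line computation (using also $d\beta\wedge\eta=0$) yields $dk\wedge\eta=\frac1\beta\,d(d\beta(\xi))\wedge\eta=0$. With that term gone, $\tilde R(X,Y)\xi=R(X,Y)\xi+k\bigl(\eta(X)\mathcal{A}Y-\eta(Y)\mathcal{A}X\bigr)$; multiplying by $1/\beta$ and recalling $\tilde\xi=\frac1\beta\xi$ gives precisely (\ref{i7}).
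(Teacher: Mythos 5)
Your proof is correct. For (\ref{i5}) and (\ref{i6}) it coincides with the paper's argument (the paper is terse about (\ref{i6}), but reading it through (\ref{hh}) applied to the deformed structure, as you do, is exactly what is intended). For (\ref{i7}) your route differs in one organizational respect: the paper computes $\tilde R(X,Y)\tilde\xi$ directly by iterating (\ref{i00}) on $\tilde\nabla_Y\tilde\xi=\frac{1}{\beta}\nabla_Y\xi$, so the only function that ever gets differentiated is $1/\beta$, and one lands immediately on $\frac{X(\beta)}{\beta^{2}}\mathcal{A}Y-\frac{Y(\beta)}{\beta^{2}}\mathcal{A}X+\frac{1}{\beta}R(X,Y)\xi$, after which $X(\beta)=d\beta(\xi)\eta(X)$ finishes the proof. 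You instead compute $\tilde R(X,Y)\xi$ via the general difference-tensor curvature formula and rescale by $1/\beta$ only at the end; this is perfectly valid, but it forces you to differentiate $k=d\beta(\xi)/\beta$ and then dispose of the resulting $dk\wedge\eta$ term --- an extra step the paper's ordering avoids entirely. Your handling of that step is the right one and is correct: $d\beta=d\beta(\xi)\,\eta$ together with $d\eta=0$ gives $d(d\beta(\xi))\wedge\eta=dd\beta=0$, whence $dk\wedge\eta=\frac{1}{\beta}\,d(d\beta(\xi))\wedge\eta=0$. What your approach buys is generality (the same difference-tensor identity handles $\tilde R(X,Y)Z$ for arbitrary $Z$); what the paper's buys is brevity. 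One inconsequential slip: $T(Y,-\mathcal{A}X)=+\frac{\beta^{2}-\gamma}{\beta^{2}}g(\mathcal{A}X,\mathcal{A}Y)\xi$, not minus; since the term is symmetric in $X,Y$ either way and is discarded precisely for that reason, nothing in the argument changes.
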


By using (\ref{7}), (\ref{deformation}) and (\ref{i00}) we obtain%
\begin{equation*}
\mathcal{A}^{^{\prime }}X=\frac{X(\beta )}{\beta ^{2}}\xi -\frac{1}{\beta }%
\nabla _{X}\xi -\frac{1}{\beta ^{2}}d\beta (\xi )\eta (X)\xi .
\end{equation*}%
By virtue of the definition $\beta $, the last equation reduces to (\ref{i5}%
). (\ref{i6}) follows from (\ref{deformation}) by using the properties of $%
h. $ First, from (\ref{deformation}) and (\ref{i00}) we have%
\begin{eqnarray}
\tilde{\nabla}_{X}\tilde{\nabla}_{Y}\tilde{\xi} &=&\nabla _{X}\tilde{\nabla}%
_{Y}\tilde{\xi}-\frac{(\beta ^{2}-\gamma )}{\beta ^{2}}g(\mathcal{A}X,\tilde{%
\nabla}_{Y}\tilde{\xi})\xi +\frac{1}{\beta }d\beta (\xi )\eta (X)\eta (%
\tilde{\nabla}_{Y}\tilde{\xi})\xi ,  \label{i8} \\
\tilde{\nabla}_{Y}\tilde{\xi} &=&\frac{1}{\beta }\nabla _{Y}\xi \text{\ .}
\label{i9}
\end{eqnarray}%
Using the properties of $\mathcal{A}$ and (\ref{i9}), (\ref{i8}) reduces to%
\begin{equation}
\tilde{\nabla}_{X}\tilde{\nabla}_{Y}\tilde{\xi}=\frac{X(\beta )}{\beta ^{2}}%
\mathcal{A}Y+\frac{1}{\beta }\nabla _{X}\nabla _{Y}\xi +\frac{(\beta
^{2}-\gamma )}{\beta ^{3}}g(\mathcal{A}X,\mathcal{A}Y)\xi .  \label{i10}
\end{equation}%
Then by (\ref{i9}) and (\ref{i10}) we obtain by a straightforward calculation%
\begin{eqnarray*}
\tilde{R}(X,Y)\tilde{\xi} &=&\tilde{\nabla}_{X}\tilde{\nabla}_{Y}\xi -\tilde{%
\nabla}_{Y}\tilde{\nabla}_{X}\xi -\tilde{\nabla}_{\left[ X,Y\right] }\tilde{%
\xi} \\
&=&\frac{X(\beta )}{\beta ^{2}}\mathcal{A}Y-\frac{Y(\beta )}{\beta ^{2}}%
\mathcal{A}X+\frac{1}{\beta }R(X,Y)\xi
\end{eqnarray*}%
which gives (\ref{i7}).

\section{\protect\bigskip Almost $\protect\alpha $-para-Kenmotsu $(\protect%
\kappa ,\protect\mu ,\protect\nu )$-Spaces}

In this section we study almost $\alpha $-para-Kenmotsu manifolds under
assumption that the curvature $R(X,Y)Z$ satisfies so called $(\kappa ,\mu
,\nu )$-nullity condition, i.e.%
\begin{equation}
R(X,Y)\xi =\eta (Y)BX-\eta (X)BY,  \label{BI}
\end{equation}%
where $B$ is a $(1,1)$-tensor field defined by%
\begin{equation}
BX=\kappa \phi ^{2}X+\mu hX+\nu \phi hX  \label{B2}
\end{equation}%
for $\kappa ,\mu ,\nu \in \mathcal{R}_{\eta }(M^{2n+1})$. Particularly $B\xi
=0.$

If an almost $\alpha $-paracosymplectic manifold satisfies (\ref{BI}), then
the manifold is said to be almost $\alpha $-paracosymplectic $(\kappa ,\mu
,\nu )$-space and $(\phi ,\xi ,\eta ,g)$ be called almost $\alpha $%
-paracosymplectic $(\kappa ,\mu ,\nu )$-structure.

Using (\ref{i7}) and after some calculations one can prove following
proposition.

\begin{proposition}
\label{relations2}Under the same assumptions of Proposition \ref{relations},
if $(\phi ,\xi ,\eta ,g)$ is an almost $\alpha $-para-Kenmotsu $(\kappa ,\mu
,\upsilon )$-structure then $(\tilde{\phi},\tilde{\xi},\tilde{\eta},\tilde{g}%
)$ is an almost $(\frac{\gamma }{\beta })$-paracosymplectic $(\tilde{\kappa},%
\tilde{\mu},\tilde{\nu})$ structure, where 
\begin{equation*}
\tilde{\kappa}=\frac{\kappa }{\beta ^{2}}+\frac{\alpha }{\beta ^{3}}d\beta
(\xi ),\text{ \ \ }\tilde{\mu}=\frac{\mu }{\beta },\text{ \ \ }\tilde{\nu}=%
\frac{\nu }{\beta }+\frac{d\beta (\xi )}{\beta ^{2}};\text{ \ }\tilde{\kappa}%
,\tilde{\mu},\tilde{\nu}\in \mathcal{R}_{\tilde{\eta}}(M^{2n+1}).
\end{equation*}
\end{proposition}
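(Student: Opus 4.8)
The plan is to carry over the $(\kappa,\mu,\nu)$-nullity condition through the $D_{\gamma,\beta}$-homothetic deformation by directly substituting the formulas of Proposition \ref{relations} into the definition (\ref{BI})--(\ref{B2}) written for the deformed structure. First I would write down what the deformed nullity condition must be: $\tilde R(X,Y)\tilde\xi = \tilde\eta(Y)\tilde B X - \tilde\eta(X)\tilde B Y$ with $\tilde B X = \tilde\kappa\,\tilde\phi^2 X + \tilde\mu\,\tilde h X + \tilde\nu\,\tilde\phi\tilde h X$. Using $\tilde\phi=\phi$, $\tilde\eta = \beta\eta$, and (\ref{i6}) (so $\tilde h = \tfrac1\beta h$, hence $\tilde\phi\tilde h = \tfrac1\beta\phi h$), the candidate operator $\tilde B$ is already forced to have the stated shape provided the scalars are chosen correctly; the content of the proposition is verifying that the $\tilde\kappa,\tilde\mu,\tilde\nu$ given do the job.

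Next I would take the known expression (\ref{i7}),
\[
\tilde R(X,Y)\tilde\xi = \frac1\beta R(X,Y)\xi + \frac{1}{\beta^2}d\beta(\xi)\bigl[\eta(X)\mathcal{A}Y - \eta(Y)\mathcal{A}X\bigr],
\]
and substitute the hypothesis $R(X,Y)\xi = \eta(Y)BX - \eta(X)BY$ with $BX = \kappa\phi^2 X + \mu h X + \nu\phi h X$. I would also replace $\mathcal{A}$ using (\ref{7}), i.e. $\mathcal{A}X = -\alpha\phi^2 X - \phi h X$ (recall $\mathcal{A} = -\nabla\xi$ and $\nabla\xi = \alpha\phi^2+\phi h$). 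After this substitution both terms on the right are of the form $\eta(Y)(\cdots)X - \eta(X)(\cdots)Y$, and collecting the coefficients of $\phi^2$, $h$ and $\phi h$ gives an operator $\tfrac1\beta B - \tfrac{1}{\beta^2}d\beta(\xi)\mathcal{A}$ acting in the bracket. One then matches this against $\beta\,\tilde B$ (the factor $\beta$ coming from $\tilde\eta = \beta\eta$) term by term: the $\phi^2$-coefficient forces $\beta\tilde\kappa = \tfrac\kappa\beta + \tfrac{\alpha}{\beta^2}d\beta(\xi)$, the $h$-coefficient forces $\beta\tilde\mu = \tfrac\mu\beta$, and the $\phi h$-coefficient forces $\beta\tilde\nu = \tfrac\nu\beta + \tfrac{1}{\beta^2}d\beta(\xi)$, which are exactly the claimed values.

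Finally I would check the regularity claim $\tilde\kappa,\tilde\mu,\tilde\nu \in \mathcal{R}_{\tilde\eta}(M^{2n+1})$, i.e. that their differentials are proportional to $\tilde\eta$ (equivalently to $\eta$). This uses $\kappa,\mu,\nu\in\mathcal{R}_\eta(M^{2n+1})$, $\alpha = const$, and $\beta\in\mathcal{R}_\eta(M^{2n+1})$ with $d\beta = d\beta(\xi)\,\eta$; one also needs that the function $\xi\mapsto d\beta(\xi)$, and more generally any function built algebraically from $\beta$ and such, stays in $\mathcal{R}_\eta$, which follows because the product and reciprocal of functions in $\mathcal{R}_\eta$ again lie in $\mathcal{R}_\eta$ and $\beta$ never vanishes. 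The only mildly delicate point — the main obstacle — is confirming that $d\beta(\xi)\in\mathcal{R}_\eta(M^{2n+1})$, i.e. that $d(d\beta(\xi))\wedge\eta = 0$; this can be seen by differentiating $d\beta = (d\beta(\xi))\eta$, using $d\eta = 0$ and $d^2\beta = 0$ to get $d(d\beta(\xi))\wedge\eta = 0$. Once this is in hand the whole verification is a routine term-by-term comparison, and the computation referred to by "after some calculations" in the statement is exactly this matching.
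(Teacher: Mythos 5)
Your approach is exactly the one the paper intends: the paper's "proof" consists of the single sentence "Using (\ref{i7}) and after some calculations one can prove following proposition," and your plan — substitute the nullity hypothesis and $\mathcal{A}=-(\alpha\phi^{2}+\phi h)$ into (\ref{i7}), then compare with $\tilde\eta(Y)\tilde BX-\tilde\eta(X)\tilde BY$ using $\tilde\phi=\phi$, $\tilde\eta=\beta\eta$, $\tilde h=\tfrac1\beta h$ — is precisely that calculation, and your treatment of the membership in $\mathcal{R}_{\tilde\eta}$ (via $0=d^{2}\beta=d(d\beta(\xi))\wedge\eta$) is a correct and worthwhile addition. One bookkeeping slip in your matching equations: since $\tilde\mu\,\tilde h=\tfrac{\tilde\mu}{\beta}h$ and $\tilde\nu\,\tilde\phi\tilde h=\tfrac{\tilde\nu}{\beta}\phi h$, the factor $\beta$ from $\tilde\eta=\beta\eta$ is cancelled by the $\beta^{-1}$ from $\tilde h$, so the $h$- and $\phi h$-coefficients give $\tilde\mu=\tfrac{\mu}{\beta}$ and $\tilde\nu=\tfrac{\nu}{\beta}+\tfrac{d\beta(\xi)}{\beta^{2}}$ directly, not $\beta\tilde\mu=\tfrac{\mu}{\beta}$ and $\beta\tilde\nu=\cdots$ as you wrote (only the $\phi^{2}$-term, which is not rescaled, carries the full factor $\beta\tilde\kappa$); as written your displayed equations would yield $\tilde\mu=\mu/\beta^{2}$, contradicting the values you then correctly assert.
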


For an almost $\alpha $-paracosymplectic $(\kappa ,\mu ,\nu )$-space we may
consider a scalar invariant with respect to the $D_{\gamma ,\beta }$%
-homothety, that is a function $I(\alpha ,\kappa ,\mu ,\nu )$ with the
property that $I(\alpha ,\kappa ,\mu ,\nu )=I(\alpha ^{\prime },\kappa
^{\prime },\mu ^{\prime },\nu ^{\prime })$ for arbitrary $D_{\gamma ,\beta }$%
-homothety. In the case $\mu \neq 0$ by direct computations we find that 
\begin{equation}
I_{0}(\alpha ,\kappa ,\mu ,\nu )=\dfrac{\kappa -\alpha \nu }{\mu ^{2}},
\end{equation}%
is an invariant. An almost $\alpha $-paracosymplectic space will be called
of constant $I_{0}$-type if $\mu \neq 0$ and $I_{0}=const$ is a constant.

\begin{proposition}
\label{irem}Let $(M^{2n+1},\phi ,\xi ,\eta ,g)$ be an almost $\alpha $%
-para-Kenmotsu $(\kappa ,\mu ,\nu )$-space. Then the following identities
hold:%
\begin{equation}
l=\kappa \phi ^{2}+\mu h+\nu \phi h,  \label{ib1}
\end{equation}%
\begin{equation}
l\phi -\phi l=2\mu h\phi -2\nu h,  \label{ib2}
\end{equation}%
\begin{equation}
h^{2}=(\kappa +\alpha ^{2})\phi ^{2},  \label{ib3}
\end{equation}%
\begin{equation}
\nabla _{\xi }h=-(2\alpha +\nu )h+\mu h\phi ,  \label{ib4}
\end{equation}%
\begin{equation}
\nabla _{\xi }h^{2}=-2(2\alpha +\nu )(\kappa +\alpha ^{2})\phi ^{2},
\label{ib5}
\end{equation}%
\begin{equation}
\xi (\kappa )=-2(2\alpha +\nu )(\kappa +\alpha ^{2}),  \label{ib6}
\end{equation}%
\begin{eqnarray}
R(\xi ,X)Y &=&\kappa (g(X,Y)\xi -\eta (Y)X)+\mu (g(X,hY)\xi -\eta (Y)hX)
\label{ib7} \\
&&+\nu (g(X,\phi hY)\xi -\eta (Y)\phi hX),  \notag
\end{eqnarray}%
\begin{equation}
Q\xi =2n\kappa \xi ,  \label{ib8}
\end{equation}%
\begin{equation}
(\nabla _{X}\phi )Y=g(Y,hX+\alpha \phi X)\xi -\eta (Y)(hX+\alpha \phi X),
\label{ib9}
\end{equation}%
\begin{eqnarray}
(\nabla _{X}\phi h)Y-(\nabla _{Y}\phi h)X &=&(\kappa +\alpha ^{2})(\eta
(Y)X-\eta (X)Y)+\mu (\eta (Y)hX-\eta (X)hY)  \notag \\
&&+(\nu +\alpha )(\eta (Y)\phi hX-\eta (X)\phi hY),  \label{ib10}
\end{eqnarray}%
\begin{eqnarray}
(\nabla _{X}h)Y-(\nabla _{Y}h)X &=&(\kappa +\alpha ^{2})(\eta (Y)\phi X-\eta
(X)\phi Y+2g(Y,\phi X)\xi )+\mu (\eta (Y)\phi hX-\eta (X)\phi hY)  \notag \\
&&+(\nu +\alpha )(\eta (Y)hX-\eta (X)hY),  \label{ib11}
\end{eqnarray}%
for all vector fields $X,Y$ on $M^{2n+1}.$
\end{proposition}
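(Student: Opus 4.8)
The plan is to derive the list of identities sequentially, starting from the defining conditions \eqref{BI}--\eqref{B2} and the general curvature formula \eqref{b3} valid on any almost $\alpha$-para-Kenmotsu manifold. First I would obtain \eqref{ib1}: setting $Y=\xi$ in \eqref{BI} gives $R(X,\xi)\xi = -BX$ when $\eta(X)=0$, and since both sides vanish on $\xi$ this yields $lX = R(X,\xi)\xi = \kappa\phi^2 X + \mu hX + \nu\phi hX$ after matching signs (recall $B\xi=0$ and $\phi^2 = Id - \eta\otimes\xi$). Identity \eqref{ib2} is then immediate from \eqref{ib1} together with $\phi h + h\phi = 0$ (equation \eqref{6.2}) and $\phi^2$ commuting with $\phi$: one computes $l\phi X - \phi l X = \mu(h\phi - \phi h)X + \nu(\phi h\phi - \phi^2 h)X = 2\mu h\phi X - 2\nu hX$, using $\phi h\phi = -\phi^2 h = -h$ on $\ker\eta$ and $h\xi=0$.

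Next, \eqref{ib3} comes from plugging \eqref{ib1} into the already-established identity \eqref{iremmmm}, namely $\tfrac12(R(\xi,X)\xi + \phi R(\xi,\phi X)\xi) = \alpha^2\phi^2 X - h^2 X$. The left side, evaluated via \eqref{ib1} and using $h\phi = -\phi h$, $\phi^2 h = h$ on $\ker\eta$, collapses to $-\kappa\phi^2 X$, giving $h^2 = (\kappa + \alpha^2)\phi^2$. For \eqref{ib4} I would compute $\nabla_\xi h$ from \eqref{iremmm}: substituting $R(X,\xi)\xi = -lX$ with $l$ as in \eqref{ib1}, using $\phi h^2 = (\kappa+\alpha^2)\phi^3 = -(\kappa+\alpha^2)\phi h\circ\text{(stuff)}$... more carefully, $(\nabla_\xi h)X = -\alpha^2\phi X - 2\alpha hX + \phi h^2 X + \phi l X$, and since $\phi h^2 X = (\kappa+\alpha^2)\phi^3 X = (\kappa+\alpha^2)\phi X$ (on $\ker\eta$) while $\phi l X = \kappa\phi^3 X + \mu\phi h X + \nu\phi^2 h X = \kappa\phi X - \mu h\phi X\cdot(\text{sign}) + \nu h X$, careful bookkeeping with $\phi h = -h\phi$ gives $(\nabla_\xi h)X = -(2\alpha+\nu)hX + \mu h\phi X$. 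Then \eqref{ib5} follows by differentiating $h^2 = (\kappa+\alpha^2)\phi^2$ along $\xi$, using $\nabla_\xi\phi = 0$ (Proposition \ref{properties2}) so $\nabla_\xi h^2 = \xi(\kappa)\phi^2$; independently $\nabla_\xi h^2 = (\nabla_\xi h)h + h(\nabla_\xi h) = -2(2\alpha+\nu)h^2 = -2(2\alpha+\nu)(\kappa+\alpha^2)\phi^2$ (the $\mu h\phi$ terms cancel by anticommutativity), and comparing gives both \eqref{ib5} and \eqref{ib6} at once.

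For the remaining identities: \eqref{ib7} is obtained from \eqref{ib1} by the curvature symmetry $g(R(\xi,X)Y,Z) = g(R(Y,Z)\xi,X) = \eta(Z)g(BY,X) - \eta(Y)g(BZ,X)$ (using \eqref{BI}), then re-expressing the result as a vector identity using symmetry of $h$ and $\phi h$. Identity \eqref{ib8} comes from contracting \eqref{ib1} over a $\phi$-basis: $S(\xi,\xi) = \mathrm{tr}\, l = 2n\kappa$ since $\mathrm{tr}(h) = \mathrm{tr}(\phi h) = 0$ by Corollary \ref{trace}, and more generally $Q\xi = 2n\kappa\xi$ follows because \eqref{BI} forces $M$ to have para-Kaehler leaves (this is the stated main result of this section) — alternatively directly from \eqref{SZETA} once one shows $\mathrm{div}(\phi h)$ has the right form. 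Identity \eqref{ib9} is precisely the para-Kaehler-leaf characterization \eqref{MEHMET} rewritten, which holds because, as the section advertises, every $(\kappa,\mu,\nu)$-space has para-Kaehler leaves; I would either cite that forthcoming fact or derive \eqref{ib9} from \eqref{b3} by antisymmetrizing and using that $(\nabla_X\phi)Y$ has a constrained form. Finally \eqref{ib10} is just \eqref{b3} rewritten after substituting \eqref{ib1} for $R(X,Y)\xi$ and collecting the $\eta(X)$, $\eta(Y)$ coefficients, and \eqref{ib11} follows from \eqref{ib10} via $(\nabla_X\phi h)Y = (\nabla_X\phi)hY + \phi(\nabla_X h)Y$ together with \eqref{ib9} (to handle $(\nabla_X\phi)hY$) and applying $\phi$, using $\phi^2 = Id - \eta\otimes\xi$ and \eqref{PARAKEN5}-type identities for the $\xi$-components.

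The main obstacle I anticipate is the sign-and-projection bookkeeping throughout: every step requires careful tracking of where $\phi^2 = Id - \eta\otimes\xi$ (not $-Id$, since this is the \emph{para} case) versus $\phi^3 = \phi$ on $\ker\eta$, and of the anticommutator $\phi h = -h\phi$, so that spurious $\eta\otimes\xi$ terms are correctly absorbed or shown to vanish on $\xi$. In particular, establishing \eqref{ib9} rigorously — rather than quoting the not-yet-proved claim that $(\kappa,\mu,\nu)$-spaces have para-Kaehler leaves — would be the genuinely substantive point; the cleanest route is probably to verify that $\phi((\nabla_X\phi)Y) + (\nabla_{\phi X}\phi)Y$ takes the form dictated by Proposition \ref{BEN}, combined with the constraint that \eqref{b3} and \eqref{ib1} together force, and then invoke Lemma \ref{second}/Proposition \ref{alpha}.
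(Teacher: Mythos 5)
Your derivations of \eqref{ib1}--\eqref{ib8}, \eqref{ib10} and \eqref{ib11} follow essentially the same route as the paper: \eqref{ib1} by setting $Y=\xi$ in \eqref{BI}; \eqref{ib2} by composing with $\phi$ on both sides and using $\phi h=-h\phi$, $\eta\circ h=0$; \eqref{ib3} by feeding $l$ into \eqref{iremmmm}; \eqref{ib4}--\eqref{ib6} from \eqref{iremmm} and $\xi$-differentiation of \eqref{ib3}; \eqref{ib7} from the curvature symmetry; \eqref{ib8} by tracing (using $\mathrm{tr}\,h=\mathrm{tr}\,\phi h=0$); and \eqref{ib10}, \eqref{ib11} by comparing \eqref{b3} with \eqref{BI} and splitting $\nabla(\phi h)$. (Minor slip: from \eqref{iremmm} the term is $-\phi lX$, not $+\phi lX$, though your final expression for $\nabla_\xi h$ is correct. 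Also, do not justify \eqref{ib8} by appealing to the para-Kaehler-leaf property: that is Theorem \ref{REM}, which is proved \emph{from} \eqref{ib9}, so that route is circular; your trace argument is the right one.)

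The genuine gap is \eqref{ib9}, which you correctly identify as the substantive point but do not actually prove. Your proposed route — checking that $\phi((\nabla_{\phi X}\phi)Y)+(\nabla_X\phi)Y$ has the form dictated by Proposition \ref{BEN} and then invoking Lemma \ref{second} — cannot work as stated: Proposition \ref{BEN} holds on \emph{every} almost $\alpha$-paracosymplectic manifold and supplies only one linear relation between $(\nabla_X\phi)Y$ and $(\nabla_{\phi X}\phi)Y$; it does not determine $(\nabla_X\phi)Y$ individually, so no amount of rearranging it yields \eqref{ib9}. The extra input the paper uses is Proposition \ref{R3}, i.e.\ identity \eqref{irem5}: the combination $g(R(\xi,X)Y,Z)+g(R(\xi,X)\phi Y,\phi Z)-g(R(\xi,\phi X)\phi Y,Z)-g(R(\xi,\phi X)Y,\phi Z)$ equals $2(\nabla_{hX}\Phi)(Y,Z)$ plus explicit lower-order terms. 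Under the $(\kappa,\mu,\nu)$-condition the left-hand side is computable from \eqref{ib14}/\eqref{ib7} and collapses to $2\kappa(\eta(Z)g(X,Y)-\eta(Y)g(X,Z))$, which pins down $(\nabla_{hX}\Phi)(Y,Z)$ explicitly; replacing $X$ by $hX$ and using $h^2=(\kappa+\alpha^2)\phi^2$ together with $\nabla_\xi\phi=0$ then recovers $g((\nabla_X\phi)Y,Z)$ and hence \eqref{ib9}. Without this (or an equivalent second relation coming from the full curvature, not just $R(\cdot,\cdot)\xi$), the claim that $(\nabla_X\phi)Y$ has the para-Kaehler-leaf form is unsupported. (Note also that the final division by $\kappa+\alpha^2$ implicitly assumes $\kappa+\alpha^2\neq 0$; any complete write-up should address the degenerate case $h^2=0$ separately.)
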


\begin{proof}
From (\ref{BI}) we get%
\begin{equation}
lX=R(X,\xi )\xi =\kappa (X-\eta (X)\xi )+\mu hX+\nu \phi hX  \label{ib12}
\end{equation}%
which gives (\ref{ib1}). By replacing $X$ by $\phi X$ in (\ref{ib12}), we
have%
\begin{equation*}
l\phi X=\kappa \phi X+\mu h\phi X-\nu hX.
\end{equation*}%
By applying now $\phi $ to (\ref{ib12}), we obtain%
\begin{equation*}
\phi lX=\kappa \phi X-\mu h\phi X+\nu hX.
\end{equation*}%
(\ref{ib2}) comes from the last two equations. From (\ref{ib12}) we easily
get%
\begin{equation}
\phi l\phi X=\kappa \phi ^{2}X-\mu hX-\nu \phi hX.  \label{ib13}
\end{equation}%
Then by (\ref{ib12}) and (\ref{ib13}) we obtain%
\begin{equation*}
-lX-\phi l\phi X=2(\alpha ^{2}\phi ^{2}X-h^{2}X).
\end{equation*}%
Comparing this equation with (\ref{iremmmm}), we have (\ref{ib3}). (\ref{ib4}%
) can be easily get from using (\ref{ib3}) in (\ref{iremmm}). From (\ref{ib3}%
) we find%
\begin{equation*}
\nabla _{\xi }h^{2}=(\nabla _{\xi }h)h+h(\nabla _{\xi }h)=-2(2\alpha +\nu
)h^{2}.
\end{equation*}%
(\ref{ib5}) comes from using (\ref{ib3}) in the last equation. One can
easily get (\ref{ib6}) by differentiating (\ref{ib3}) along $\xi $. Using $%
g(R(\xi ,X)Y,Z)=g(R(Y,Z)\xi ,X)$ and (\ref{BI}) we have%
\begin{eqnarray}
g(R(Y,Z)\xi ,X) &=&\kappa (\eta (Z)g(X,Y)-\eta (Y)g(X,Z))+\mu (\eta
(Z)g(X,hY)-\eta (Y)g(X,hZ))  \notag \\
&&+\nu (\eta (Z)g(X,\phi hY)-\eta (Y)g(X,\phi hZ)).  \label{ib14}
\end{eqnarray}%
The last equation completes the proof of (\ref{ib7}). Then the definition of
the Ricci operator directly gives (\ref{ib8}). For (\ref{ib9}), by virtue of
(\ref{ib14}), the left hand side of Eq. (\ref{irem5}) can be written as%
\begin{equation*}
2\kappa (\eta (Z)g(X,Y)-\eta (Y)g(X,Z)).
\end{equation*}%
So, (\ref{irem5}) reduces to%
\begin{eqnarray*}
2\kappa (\eta (Z)g(X,Y)-\eta (Y)g(X,Z)) &=&2(\nabla _{hX}\Phi )(Y,Z)+2\alpha
^{2}\eta (Y)g(X,Z) \\
&&-2\alpha ^{2}\eta (Z)g(X,Y)-2\alpha \eta (Z)g(\phi hX,Y)+2\alpha \eta
(Y)g(\phi hX,Z).
\end{eqnarray*}%
From the last equation we have%
\begin{equation*}
(\nabla _{hX}\Phi )(Y,Z)=(\kappa +\alpha ^{2})(\eta (Z)g(X,Y)-\eta
(Y)g(X,Z))+\alpha (\eta (Z)g(\phi hX,Y)-\eta (Y)g(\phi hX,Z)).
\end{equation*}%
By replacing $X$ by $hX$ in that equation, using (\ref{ib3}) and the
relation (\ref{i}), we get%
\begin{equation}
g((\nabla _{X}\phi )Y,Z)=(\eta (Z)g(hX,Y)-\eta (Y)g(hX,Z))+\alpha (\eta
(Z)g(\phi X,Y)-\eta (Y)g(\phi X,Z)).  \label{ib15}
\end{equation}%
Then (\ref{ib9}) follows from (\ref{ib15}). On the other hand Eq. (\ref{ib9}%
) can be written as 
\begin{equation*}
(\nabla _{X}\phi )Y=-g(\phi \mathcal{A}X,Y)\xi +\eta (Y)\phi \mathcal{A}X.
\end{equation*}%
From (\ref{b3}) we find%
\begin{equation*}
(\nabla _{X}\phi h)Y-(\nabla _{Y}\phi h)X=R(X,Y)\xi -\alpha ^{2}(\eta
(X)Y-\eta (Y)X)-\alpha (\eta (X)\phi hY-\eta (Y)\phi hX).
\end{equation*}%
Using (\ref{BI}) in the last equation we obtain (\ref{ib10}). One can easily
show that 
\begin{subequations}
\begin{equation}
(\nabla _{X}\phi h)Y-(\nabla _{Y}\phi h)X=(\nabla _{X}\phi )hY-(\nabla
_{Y}\phi )hX+\phi ((\nabla _{X}h)Y-(\nabla _{Y}h)X).  \label{cm1}
\end{equation}%
By replacing $Y$ by $hY$ in (\ref{ib9}), we get 
\begin{equation}
(\nabla _{X}\phi )hY=g(hY,hX+\alpha \phi X)\xi .  \label{m3}
\end{equation}%
From (\ref{b3}) and (\ref{BI}), we have 
\end{subequations}
\begin{equation}
\begin{array}{c}
\kappa (\eta (Y)X-\eta (X)Y)+\mu (\eta (Y)hX-\eta (X)hY)+\nu (\eta (Y)\phi
hX-\eta (X)\phi hY)= \\[4pt] 
\alpha ^{2}(\eta (X)Y-\eta (Y)X)+\alpha (\eta (X)\phi hY-\eta (Y)\phi
hX)+(\nabla _{X}\phi h)Y-(\nabla _{Y}\phi h)X.%
\end{array}
\label{ib16}
\end{equation}%
After using (\ref{cm1}) and (\ref{m3}) in (\ref{ib16}), we obtain%
\begin{equation}
\begin{array}{l}
\kappa (\eta (Y)X-\eta (X)Y)+\mu (\eta (Y)hX-\eta (X)hY)+\nu (\eta (Y)\phi
hX-\eta (X)\phi hY)= \\[4pt] 
\alpha ^{2}(\eta (X)Y-\eta (Y)X)+\alpha (\eta (X)\phi hY-\eta (Y)\phi
hX)+\alpha g(\phi X,hY)\xi \\[4pt] 
-\alpha g(\phi Y,hX)\xi +\phi ((\nabla _{X}h)Y-(\nabla _{Y}h)X).%
\end{array}
\label{ib17}
\end{equation}%
Then by applying $\phi $ to (\ref{ib17}) we have (\ref{ib11}).
\end{proof}

In the following result we additionally assume that the set $U:\alpha \neq 0$
is dense.

\begin{theorem}
\label{REM} An almost $\alpha $-para-Kenmotsu $(\kappa ,\mu ,\nu )$-spaces $%
(M^{2n+1},\phi ,\xi ,\eta ,g)$, satisfy the para-Kaehlerian structure
condition.
\end{theorem}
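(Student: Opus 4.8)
Recall that "$M$ satisfies the para-Kaehlerian structure condition" means precisely that $M$ has para-Kaehler leaves in the sense of Section \textbf{5}; by Proposition \textbf{\ref{alpha}} this is equivalent to the single tensorial identity (\ref{MEHMET}), so the plan is simply to establish (\ref{MEHMET}) on all of $M^{2n+1}$. The key observation is that (\ref{MEHMET}) is nothing but formula (\ref{ib9}) of Proposition \textbf{\ref{irem}}: expanding $g(Y,hX+\alpha\phi X)=g(hX,Y)+\alpha g(\phi X,Y)$ by symmetry of $h$ and distributing $-\eta(Y)(hX+\alpha\phi X)$ turns (\ref{ib9}) into (\ref{MEHMET}) verbatim. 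Thus on every open set where (\ref{ib9}) is valid, $M$ already has para-Kaehler leaves.

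The first step is to locate precisely where (\ref{ib9}) was proved. It came from Proposition \textbf{\ref{R3}} and the nullity condition (\ref{BI}), which together give
\[
(\nabla_{hX}\Phi)(Y,Z)=(\kappa+\alpha^{2})\bigl(\eta(Z)g(X,Y)-\eta(Y)g(X,Z)\bigr)+\alpha\bigl(\eta(Z)g(\phi hX,Y)-\eta(Y)g(\phi hX,Z)\bigr),
\]
after which one replaces $X$ by $hX$ and cancels the factor $h^{2}=(\kappa+\alpha^{2})\phi^{2}$ supplied by (\ref{ib3}). That cancellation is legitimate exactly on the open set $W=\{\,\kappa+\alpha^{2}\neq0\,\}=\{\,h^{2}\neq0\,\}$; so (\ref{MEHMET}) holds as written on $W$, and by continuity of $\nabla\phi$, $h$, $\phi$, $\eta$ and $\alpha$ it extends to $\overline{W}$. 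What is left is the interior $V$ of $\{\,\kappa+\alpha^{2}=0\,\}$, where $h$ is $2$-step nilpotent.

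To deal with $V$ I would use the standing hypothesis that $U=\{\,\alpha\neq0\,\}$ is dense, so that $V\cap U$ is dense in $V$ and $\kappa=-\alpha^{2}\neq0$, $h^{2}=0$ there. At $\kappa+\alpha^{2}=0$ the displayed formula above already gives $(\nabla_{hX}\phi)Y=\alpha g(\phi hX,Y)\xi-\alpha\eta(Y)\phi hX$, which is exactly (\ref{MEHMET}) evaluated on $hX$ because $h^{2}=0$; the issue is to recover (\ref{MEHMET}) on $\ker h$. For this I would play off the nullity condition (\ref{BI}), the $\xi$-evolution equations (\ref{ib4}), (\ref{ib6}), and the antisymmetric-$\nabla h$ relation (\ref{ib11}) against one another, and, where helpful, use that by Theorem \textbf{\ref{Dhomev}} together with Propositions \textbf{\ref{relations}}--\textbf{\ref{relations2}} the structure is locally $D_{\gamma,\alpha}$-homothetic near $V\cap U$ to an almost para-Kenmotsu $(\kappa,\mu,\nu)$-structure on which $h$ and the nullity condition behave the same way. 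Once (\ref{MEHMET}) is secured on $V\cap U$ it holds on $V$ by continuity, and then on $M^{2n+1}=\overline{W}\cup V$; Proposition \textbf{\ref{alpha}} then yields the para-Kaehler leaves.

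I expect essentially all the difficulty to sit in the locus $V$ where $h^{2}=0$: there $h$ is non-invertible -- one of the "exceptional" nilpotent phenomena flagged in the Introduction -- so one cannot merely cancel $h$, and (\ref{MEHMET}) on $\ker h$ must be extracted indirectly from the evolution of $h$ and the density of $U$. Away from $V$ the theorem is an immediate rereading of (\ref{ib9}) through Proposition \textbf{\ref{alpha}}, and the density hypothesis enters only to ensure that this well-behaved region is dense enough for the tensorial identity to spread to the whole manifold.
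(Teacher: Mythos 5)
Your reduction of the statement to the single identity (\ref{MEHMET}) via Proposition \ref{alpha}, and your observation that (\ref{MEHMET}) is literally Eq. (\ref{ib9}) of Proposition \ref{irem}, is exactly the route the paper takes (the paper adds a detour through the $D_{1,\alpha}$-homothety of Theorem \ref{Dhomev}, which is logically superfluous once (\ref{ib9}) is available for the original structure). You have also correctly diagnosed the delicate point that the paper passes over in silence: the derivation of (\ref{ib9}) from Proposition \ref{R3} proceeds by substituting $hX$ for $X$ and cancelling $h^{2}=(\kappa+\alpha^{2})\phi^{2}$ via (\ref{ib3}), which is only legitimate on $W=\{\kappa+\alpha^{2}\neq 0\}$. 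Your decomposition $M^{2n+1}=\overline{W}\cup V$ and the extension of the closed condition (\ref{MEHMET}) from $W$ to $\overline{W}$ by continuity are both correct.

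The gap is that on $V=\operatorname{int}\{\kappa+\alpha^{2}=0\}$ you do not prove anything: the sentence announcing that you ``would play off'' (\ref{BI}), (\ref{ib4}), (\ref{ib6}) and (\ref{ib11}) against one another, possibly after a $D_{\gamma,\beta}$-homothety, is a plan, not an argument, and it is precisely on $V$ that the theorem has content. Note why none of the named tools obviously suffices: the nullity hypothesis (\ref{BI}) constrains only $R(X,Y)\xi$, and the only bridge from curvature to $\nabla\phi$ in the paper is Proposition \ref{R3}, whose output (\ref{irem5}) determines $(\nabla_{hX}\Phi)(Y,Z)$ --- i.e.\ the covariant derivative of $\Phi$ in directions lying in $\operatorname{im}h$. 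When $h^{2}=0$ with $h\neq 0$ one has $\operatorname{im}h\subsetneq\ker h$, so the components of $\nabla\phi$ in directions of $\ker h$ transverse to $\operatorname{im}h$ are simply not reached; (\ref{ib4}) and (\ref{ib6}) control only the $\xi$-derivative of $h$, and (\ref{ib11}) only the antisymmetrized $\nabla h$, neither of which closes this loop. (As you correctly note, the degenerate locus is empty in dimension $3$, where every leaf is an integrable $2$-dimensional para-Hermitian surface.) So either you must produce an actual derivation of (\ref{MEHMET}) on $V$ --- for instance by showing that the nilpotent case forces enough rigidity on $\nabla h$ to recover it --- or the theorem remains unproved there. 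Be aware that the paper's own proof inherits exactly the same unaddressed case, since it rests on (\ref{ib9}) whose proof performs the same cancellation; your diagnosis is therefore valuable, but your proposal as written does not repair it.
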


\begin{proof}
We only need to prove this if $n\geqslant 2$ for arbitrary $3$-dimensional $%
\alpha $-para-Kenmotsu manifold satisfies has para-Kaehler leaves. According
to the Theorem \textbf{\ref{Dhomev}} the $\alpha $-paracosymplectic manifold 
$(U,\phi |_{U},\xi |_{U},\eta |_{U},g|_{U})$ is $D_{1,\alpha }$-homotetic to
almost para-Kenmotsu manifold $(U,\phi ^{\prime },\xi ^{\prime },\eta
^{\prime },g^{\prime })$, (cf. \ref{deformation}). If necessary we restrict
our attention to connected components of $U$. Now Eq.(\ref{ib9}) tells us
that $U$ viewed as almost para-Kenmotsu manifold has para-Kaehler leaves.
Moreover we notice that arbitrary $D_{\gamma ,\beta }$-homothety preserves
this property thus we conclude that the original structure $(\phi ,\xi ,\eta
,g)$ also satisfies the para-Kaehlerian structure condition. Finally if $%
(\nabla _{X}\phi )Y$ satisfies (\ref{ib9}) on $U$, then this identity must
be satisfied everywhere on $M^{2n+1}$.
\end{proof}

For manifolds with constant sectional curvature $c$, $R(X,Y)\xi =c(\eta
(Y)X-\eta (X))$ thus in our terminology these manifolds are almost
paracosymplectic $(c,0,0)$-spaces.

\begin{corollary}
An almost $\alpha $-para-Kenmotsu manifold of constant sectional curvature
has para-Kaehlerian leaves.
\end{corollary}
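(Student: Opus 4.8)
The plan is to recognise a constant-curvature almost $\alpha $-para-Kenmotsu manifold as an almost $\alpha $-para-Kenmotsu $(\kappa ,\mu ,\nu )$-space and then to invoke Theorem \ref{REM}.

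First I would put the curvature hypothesis into nullity form. On a space of constant sectional curvature $c$ one has $R(X,Y)Z=c(g(Y,Z)X-g(X,Z)Y)$, so, using $\eta (X)=g(X,\xi )$, $R(X,Y)\xi =c(\eta (Y)X-\eta (X)Y)$. Decomposing $X=\phi ^{2}X+\eta (X)\xi $ and likewise for $Y$, the two $\xi $-terms cancel and we obtain $R(X,Y)\xi =\eta (Y)(c\phi ^{2}X)-\eta (X)(c\phi ^{2}Y)$. This is exactly the $(\kappa ,\mu ,\nu )$-nullity condition (\ref{BI})--(\ref{B2}) with $BX=c\phi ^{2}X$, i.e.\ $\kappa =c$ and $\mu =\nu =0$; since $c$ is a constant, $d\kappa =0$ and trivially $d\kappa \wedge \eta =0$, so $\kappa ,\mu ,\nu \in \mathcal{R}_{\eta }(M^{2n+1})$. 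Hence $M^{2n+1}$ is an almost $\alpha $-para-Kenmotsu $(c,0,0)$-space (cf.\ the remark preceding the statement, specialised to this subclass).

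Second, I would apply Theorem \ref{REM}, which asserts that every almost $\alpha $-para-Kenmotsu $(\kappa ,\mu ,\nu )$-space satisfies the para-Kaehlerian structure condition (\ref{MEHMET}), equivalently has para-Kaehler leaves. Specialising to the $(c,0,0)$-structure from the first step yields the assertion. I do not foresee a genuine obstacle, as the corollary is an immediate consequence of Theorem \ref{REM} together with the identification above. The one subtlety worth noting is that, by the theorem proved just above, constant sectional curvature forces $c=-\alpha ^{2}$, whence $\kappa +\alpha ^{2}=0$ and $h^{2}=0$; this is the degenerate stratum of the $(\kappa ,\mu ,\nu )$-family in which the derivation of (\ref{ib9}) by dividing by $\kappa +\alpha ^{2}$ degenerates. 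Theorem \ref{REM} is, however, designed to cover all such spaces (its proof passing through a $D_{1,\alpha }$-homothety on $\{\alpha \neq 0\}$ and through the fact that $3$-dimensional manifolds in this class automatically have para-Kaehler leaves), so no extra work is needed; as a consistency check one may also substitute $R(X,Y)\xi =-\alpha ^{2}(\eta (Y)X-\eta (X)Y)$ into (\ref{b3}) to obtain $(\nabla _{X}\phi h)Y-(\nabla _{Y}\phi h)X=\alpha (\eta (Y)\phi hX-\eta (X)\phi hY)$, which matches (\ref{ib10}) at $\kappa +\alpha ^{2}=0=\mu =\nu $.
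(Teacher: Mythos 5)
Your proposal is correct and follows exactly the paper's route: the remark preceding the corollary identifies a constant-curvature manifold as a $(c,0,0)$-space via $R(X,Y)\xi =c(\eta (Y)X-\eta (X)Y)$, and the conclusion is then an immediate application of Theorem \ref{REM}. Your additional observation about the degenerate stratum $c=-\alpha ^{2}$, $h^{2}=0$ is a sensible consistency check but not needed, since Theorem \ref{REM} covers all $(\kappa ,\mu ,\nu )$-spaces.
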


\begin{corollary}
\label{QQ}Let $(M^{2n+1},\phi ,\xi ,\eta ,g)$ be an almost $\alpha $%
-para-Kenmotsu $(\kappa ,\mu ,\nu )$-spaces. Then%
\begin{equation}
Q\phi -\phi Q=2\mu h\phi X-2(\nu +2\alpha (1-n))hX.  \label{Q12}
\end{equation}
\end{corollary}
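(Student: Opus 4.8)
The plan is to read off \eqref{Q12} from the general commutator formula \eqref{Q1} of Theorem \ref{Q}, exploiting the extra structure of a $(\kappa ,\mu ,\nu)$-space. First I would observe that, by Theorem \ref{REM}, an almost $\alpha$-para-Kenmotsu $(\kappa ,\mu ,\nu)$-space has para-Kaehler leaves, so the hypotheses of Theorem \ref{Q} are met and we may start from
\[
Q\phi-\phi Q=l\phi-\phi l-4\alpha(1-n)h-\eta\otimes\phi Q+(\eta\circ Q\phi)\otimes\xi .
\]

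Next I would dispose of the two $\eta$-dependent terms. By \eqref{ib8} we have $Q\xi=2n\kappa\,\xi$; since $\phi\xi=0$ this forces $\phi Q\xi=0$, so the term $\eta\otimes\phi Q$ vanishes identically. For the remaining term, the self-adjointness of $Q$ together with $\eta\circ\phi=0$ (see \eqref{2}) gives, for every vector field $X$,
\[
\eta(Q\phi X)=g(Q\phi X,\xi)=g(\phi X,Q\xi)=2n\kappa\,\eta(\phi X)=0 ,
\]
hence $(\eta\circ Q\phi)\otimes\xi=0$ as well. Thus the commutator reduces to $Q\phi-\phi Q=l\phi-\phi l-4\alpha(1-n)h$.

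Finally I would substitute the value of $l\phi-\phi l$ from \eqref{ib2}, namely $l\phi-\phi l=2\mu h\phi-2\nu h$, to conclude
\[
Q\phi-\phi Q=2\mu h\phi-2\nu h-4\alpha(1-n)h=2\mu h\phi-2\bigl(\nu+2\alpha(1-n)\bigr)h ,
\]
which, applied to an arbitrary $X$, is \eqref{Q12}. I do not expect any genuine obstacle here: the statement is essentially a specialization of Theorem \ref{Q}, and the only point needing a short argument is the vanishing of the two $\eta$-terms, which follows at once from $Q\xi=2n\kappa\,\xi$ and the symmetry of the Ricci operator.
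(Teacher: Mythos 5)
Your proof is correct and follows essentially the same route as the paper: invoke the para-Kaehler-leaves property (Theorem \ref{REM}) so that Theorem \ref{Q} applies, kill the two $\eta$-terms using $Q\xi=2n\kappa\xi$ and the symmetry of $Q$, and substitute $l\phi-\phi l=2\mu h\phi-2\nu h$ from \eqref{ib2}. Your version is in fact slightly more careful than the paper's, since you state explicitly why Theorem \ref{Q} is applicable.
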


\begin{proof}
Using (\ref{BI}) and $\phi h=-h\phi $ we obtain $l\phi -\phi l=2\mu h\phi
X-2\nu hX$. On the other hand from (\ref{ib8}) one can easily prove that
both $\eta \otimes \phi Q$ and $(\eta \circ Q\phi )\otimes \xi $ vanish. So (%
\ref{Q12}) follows from (\ref{Q1}).
\end{proof}

\begin{remark}
Manifolds which are conformal or locally conformal to cosymplectic manifolds
were studied by many authors e.g.\cite{CapDrag}, \cite{ChinMar}, \cite{Fal-1}%
, \cite{Falcitelli}, \cite{Olszak}, \cite{marrero}.
\end{remark}

\begin{remark}
$D_{\gamma,\beta}$-homoteties as they appear in almost contact metric
geometry are particular class of deformations considerd by S. Tanno \cite%
{Tanno}. The general deformation of a metric (Riemannian) has a form $%
g^{\prime }=\alpha g+ \omega\otimes\theta+\theta\otimes\omega+\beta\omega
\otimes\omega$ where $\omega$, $\beta$ are one-forms and $\alpha$, $\beta$
are functions, $\alpha>0$, $\alpha+\beta >0$. The paper seems to be nowadays
completely forgotten in the framework of almost contact metric geometry.
\end{remark}

\section{Classification of the $3$-dimensional almost $\protect\alpha $%
-para-Kenmotsu $(\protect\kappa ,\protect\mu ,\protect\nu )$-spaces}

In this section, different possibilities for the tensor field $h$ are
investigated. Thus we can comprehend the differences between the almost $%
\alpha $-para-Kenmotsu and almost $\alpha $-Kenmotsu cases by looking at the
possible Jordan forms of the tensor field $h.$

It is well known that a self-adjoint linear operator $\Psi $ of a Euclidean
space is always diagonalizable, but this is not the case for a self-adjoint
linear operator $\Psi $ for a Lorentzian inner product. It is known (\cite%
{a.z.petrov}, pp. 50-55) that self-adjoint linear operator of a vector space
with a Lorentzian inner product can be put into four possible canonical
forms. In particular, the matrix representation $g$ of the induced metric on 
$M_{1}^{3} $ is of Lorentz type, so the self-adjoint linear $\Psi $ of $%
M_{1}^{3}$ can be put into one of the following four forms with respect to
frames $\left\{ e_{1},e_{2},e_{3}\right\} $ at $T_{p}M_{1}^{3}$ where $%
T_{p}M_{1}^{3}$ is a tangent space to $M$ at $p$ \cite{MAGID},\cite{ONEILL}.

$\mathfrak{h}_{1}$-type$)$ $\ \ \ \ \ \ \ \ \ \ \ \ \ \ \ \ \Psi =\left( 
\begin{array}{ccc}
\lambda _{1} & 0 & 0 \\ 
0 & \lambda _{2} & 0 \\ 
0 & 0 & \lambda _{3}%
\end{array}%
\right) ,$ \ \ \ \ $\ \ \ \ \ \ g=\left( 
\begin{array}{ccc}
-1 & 0 & 0 \\ 
0 & 1 & 0 \\ 
0 & 0 & 1%
\end{array}%
\right) ,$

\bigskip

$\mathfrak{h}_{2}$-type$)$ \ \ \ \ \ \ \ \ \ \ \ \ \ \ \ \ $\Psi $ $=\left( 
\begin{array}{ccc}
\lambda & 0 & 0 \\ 
1 & \lambda & 0 \\ 
0 & 0 & \lambda _{3}%
\end{array}%
\right) ,$ \ \ \ \ \ \ \ \ \ \ \ \ \ \ $g=\left( 
\begin{array}{ccc}
0 & 1 & 0 \\ 
1 & 0 & 0 \\ 
0 & 0 & 1%
\end{array}%
\right) $,\ \ 

\bigskip

$\mathfrak{h}_{3}$-type$)$\ \ \ \ \ \ \ \ \ \ \ \ \ \ $\ \Psi =\left( 
\begin{array}{ccc}
\gamma & -\lambda & 0 \\ 
\lambda & \gamma & 0 \\ 
0 & 0 & \lambda _{3}%
\end{array}%
\right) ,$ \ \ \ \ $\ \ \ \ \ \ $\ $\ \ g=\left( 
\begin{array}{ccc}
-1 & 0 & 0 \\ 
0 & 1 & 0 \\ 
0 & 0 & 1%
\end{array}%
\right) ,\lambda \neq 0$,\ \ \ \ \ \ \ \ \ \ 

\bigskip

$\mathfrak{h}_{4}$-type$)$ $\ \ \ \ \ \ \ \ \ \ \ \ \ \ \ \ \Psi =\left( 
\begin{array}{ccc}
\lambda & 0 & 0 \\ 
0 & \lambda & 1 \\ 
1 & 0 & \lambda%
\end{array}%
\right) ,$ \ \ \ \ \ \ \ \ \ \ \ \ \ \ $g=\left( 
\begin{array}{ccc}
0 & 1 & 0 \\ 
1 & 0 & 0 \\ 
0 & 0 & 1%
\end{array}%
\right) $.

The matrices $g$ for types $\mathfrak{h}_{1})$ and $\mathfrak{h}_{3})$ are
with respect to an orthonormal basis of \ $T_{p}M_{1}^{3},$ whereas for
types $\mathfrak{h}_{2})$ and $\mathfrak{h}_{4})$ are with respect to a
pseudo-orthonormal basis. This is a basis $\left\{ e_{1},e_{2},e_{3}\right\} 
$ of \ $T_{p}M_{1}^{3}$ satisfying $%
g(e_{1},e_{1})=g(e_{2},e_{2})=g(e_{1},e_{3})=g(e_{2},e_{3})=0$ and $%
g(e_{1},e_{2})=g(e_{3},e_{3})=1.$\ 

Let $(M,\phi ,\xi ,\eta ,g)$ be a $3$-dimensional almost $\alpha $%
-paracosymplectic manifold .Then operator $h$ has \ following types.

$\mathfrak{h}_{1}$-type$)$ 
\begin{eqnarray*}
U_{1} &=&\left\{ p\in M\mid h(p)\neq 0\right\} \subset M \\
U_{2} &=&\left\{ p\in M\mid h(p)=0,\text{in a neighborhood of p}\right\}
\subset M
\end{eqnarray*}%
That $h$ is a smooth function on $M$ implies $U_{1}\cup U_{2}$ is an open
and dense subset of $M$, so any property satisfied in $U_{1}\cup U_{2}$ is
also satisfied in $M.$ For any point $p\in U_{1}\cup $ $U_{2}$ there exists
a local orthonormal $\phi $- basis $\{e,\phi e,\xi \}$ of smooth
eigenvectors of $h$ in a neighborhood of $p$, where $-g(e,e)=g(\phi e,\phi
e)=g(\xi ,\xi )=1.$ On $\ U_{1}$ we put $he=\lambda e,$ where $\lambda $ is
a non-vanishing smooth function. Since $trh=0$, we have $h\phi e=-\lambda
\phi e$. The eigenvalue function $\lambda $ is continuos on $\ M$ and smooth
on $U_{1}\cup U_{2}.$ So, $h$ \ has following form%
\begin{equation}
\left( 
\begin{array}{ccc}
\lambda & 0 & 0 \\ 
0 & -\lambda & 0 \\ 
0 & 0 & 0%
\end{array}%
\right)  \label{A1}
\end{equation}%
respect to local orthonormal $\phi $-basis $\{e,\phi e,\xi \}.$

$\mathfrak{h}_{2}$-type$)$ Using same methods in \cite{kupmur} one can
construct a local pseudo-orthonormal basis $\{e_{1},e_{2},e_{3}\}$ in a
neighborhood of $p$ where $%
g(e_{1},e_{1})=g(e_{2},e_{2})=g(e_{1},e_{3})=g(e_{2},e_{3})=0$ and $%
g(e_{1},e_{2})=g(e_{3},e_{3})=1$. Let $\mathcal{U}$ be the open subset of $M$
where $h\neq 0$. For every $p\in \mathcal{U}$ there exists an open
neighborhood of $p$ such that $he_{1}=e_{2},he_{2}=0,he_{3}=0$ and $\phi
e_{1}=\pm e_{1},$ $\phi e_{2}=\mp e_{2},\phi e_{3}=0$ and also $\xi =e_{3}$.
Thus the tensor $h$ has the form

\begin{equation}
\left( 
\begin{array}{ccc}
0 & 0 & 0 \\ 
1 & 0 & 0 \\ 
0 & 0 & 0%
\end{array}%
\right)  \label{A2}
\end{equation}%
relative a pseudo-orthonormal basis $\{e_{1},e_{2},e_{3}\}.$

$\mathfrak{h}_{3}$-type$)$ We can find a local orthonormal $\phi $-basis $%
\{e,\phi e,\xi \}$ in a neighborhood of $p$ where $-g(e,e)=g(\phi e,\phi
e)=g(\xi ,\xi )=1.$ Now, let $\mathcal{U}_{1}$ be the open subset of $M$
where $h\neq 0$ and let $\mathcal{U}_{2}$ be the open subset of points $p\in
M$ such that $h=0$ in a neighborhood of $p.$ $\mathcal{U}_{1}\cup $ $%
\mathcal{U}_{2}$ is an open subset of $M$. For every $p\in \mathcal{U}_{1}$
there exists an open neighborhood of $p$ such that $he=\lambda \phi e,h\phi
e=-\lambda e$ and $h\xi =0$ where $\lambda $ is a non-vanishing smooth
function. Since $trh=0,\ $the matrix form of $h$ is$\ $given$\ $by%
\begin{equation}
\left( 
\begin{array}{ccc}
0 & -\lambda & 0 \\ 
\lambda & 0 & 0 \\ 
0 & 0 & 0%
\end{array}%
\right)  \label{A3}
\end{equation}%
with respect to local orthonormal basis $\{e,\phi e,\xi \}.$

$\mathfrak{h}_{4}$-type$)$ Then a local pseudo-orthonormal basis $%
\{e_{1},e_{2},e_{3}\}$ is constructed in a neighborhood of $p$ where $%
g(e_{1},e_{1})=g(e_{2},e_{2})=g(e_{1},e_{3})=g(e_{2},e_{3})=0$ and $%
g(e_{1},e_{2})=g(e_{3},e_{3})=1.$ Since the tensor $h$ is $\mathfrak{h}_{4}$%
-type) (with respect to a pseudo-orthonormal basis $\{e_{1},e_{2},e_{3}\})$
then $he_{1}=\lambda e_{1}+e_{3},$ \ $he_{2}=\lambda e_{2}$ and $%
he_{3}=e_{2}+\lambda e_{3}.$ Since $%
0=trh=g(he_{1},e_{2})+g(he_{2},e_{1})+g(he_{3},e_{3})=3\lambda $, then $%
\lambda =0.$ We write $\xi =g(\xi ,e_{2})e_{1}+g(\xi ,e_{1})e_{2}+g(\xi
,e_{3})e_{3}$ respect to the pseudo-orthonormal basis $\{e_{1},e_{2},e_{3}\}$%
. Since $h\xi =0,$ we have $0=g(\xi ,e_{2})e_{3}+g(\xi ,e_{3})e_{2}.$ Hence
we get $\xi =g(\xi ,e_{1})e_{2}$ which leads to a contradiction with $g(\xi
,\xi )=1$. Thus, this case does not occur.

Since the proof of following lemma is similar to \cite{kupmur} we \ omit
proof of it.

\begin{lemma}
Let $(M,\phi ,\xi ,\eta ,g)$ be a $3$-dimensional almost $\alpha $%
-para-Kenmotsu manifold. \textit{Then a canonical form of }$h$ \textit{stays
constant in an open neighborhood of any point for }$h$\textit{. }
\end{lemma}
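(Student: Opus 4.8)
The plan is to show that the eigenvalue-type data determining which of the three possible Jordan forms ($\mathfrak{h}_1$, $\mathfrak{h}_2$, $\mathfrak{h}_3$) occurs at a point $p$ varies continuously, so the set of points realizing a given type is open; since these three sets partition $M$ (the $\mathfrak{h}_4$-type having been ruled out above), continuity will force the type to be locally constant. Concretely, recall that at every point the characteristic direction $\xi$ is an eigenvector of $h$ with eigenvalue $0$ (this is $h\xi=0$ from Proposition \ref{h}), and $h$ anticommutes with $\phi$, so $h$ leaves the rank-$2$ distribution $\ker\eta$ invariant and its restriction there is a traceless self-adjoint operator of a $2$-dimensional Lorentzian space. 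The type of $h$ at $p$ is thus governed by the restricted operator $h|_{\ker\eta}$, whose classification is detected by a single scalar invariant, namely $\det(h|_{\ker\eta})=-\tfrac12\operatorname{tr}(h^2)$ (using tracelessness): it is negative on $\mathfrak{h}_1$-points, zero on $\mathfrak{h}_2$-points, and positive on $\mathfrak{h}_3$-points, with the degenerate case $h=0$ sitting inside the $\mathfrak{h}_1$-closure but handled by the open/dense decomposition $U_1\cup U_2$ already introduced.

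First I would make this precise. Set $\psi=\operatorname{tr}(h^2)$, a smooth function on $M$ (it is smooth because $h$ is smooth and the trace is polynomial in the entries). On the open set where $\psi\neq 0$, the sign of $\psi$ is locally constant, and one checks directly from the canonical forms (\ref{A1}), (\ref{A2}), (\ref{A3}) that $\psi=2\lambda^2>0$ for $\mathfrak{h}_1$-type, $\psi=0$ for $\mathfrak{h}_2$-type, and $\psi=-2\lambda^2<0$ for $\mathfrak{h}_3$-type. Hence the open set $\{\psi>0\}$ consists only of $\mathfrak{h}_1$-points, $\{\psi<0\}$ only of $\mathfrak{h}_3$-points, so near any point where $\psi\neq0$ the canonical form is already locally constant. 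The remaining case is the set $Z=\{\psi=0\}$. A point of $Z$ is either an $\mathfrak{h}_1$-point with $h=0$ there (which lies in $U_1\cup U_2$ and in fact, being in the interior of $\{\psi=0\}$ only if it has a neighborhood with $h=0$, lands in $U_2$), or an $\mathfrak{h}_2$-point. So what must be shown is that a point $p$ which is an $\mathfrak{h}_2$-point — i.e. $h_p\neq0$ but $h_p^2=0$ — has a whole neighborhood of $\mathfrak{h}_2$-points.

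For that step I would argue as follows. At an $\mathfrak{h}_2$-point $p$ we have $h_p\neq0$ and $h_p^2=0$, so $h_p$ has rank exactly $1$ on $\ker\eta$; by continuity $h$ has rank $\geqslant 1$ in a neighborhood $V$ of $p$, and since $h|_{\ker\eta}$ is traceless self-adjoint on a $2$-space its rank is $1$ or $2$ on $V$. On the open subset of $V$ where the rank is $2$ we have $\det(h|_{\ker\eta})\neq0$, i.e. $\psi\neq0$; but $p$ could a priori be a limit of such points. To exclude this I would use equation (\ref{ib4}) from Proposition \ref{irem}, $\nabla_\xi h=-(2\alpha+\nu)h+\mu h\phi$, together with (\ref{ib3}), $h^2=(\kappa+\alpha^2)\phi^2$; at $p$, $h_p^2=0$ forces $\kappa(p)+\alpha^2=0$, and since this holds at every $\mathfrak{h}_2$-point and $Z$ is where $\psi=\operatorname{tr}(h^2)=-2n(\kappa+\alpha^2)$ vanishes (here in dimension $3$, $\operatorname{tr} h^2 = 2(\kappa+\alpha^2)$ times the $\phi^2$-trace), one gets that $\kappa+\alpha^2$ vanishes on an open set around $p$ — using that in a $(\kappa,\mu,\nu)$-space these are functions in $\mathcal R_\eta$ and the relation (\ref{ib6}), $\xi(\kappa)=-2(2\alpha+\nu)(\kappa+\alpha^2)$, shows $\kappa+\alpha^2$ satisfies a linear ODE along $\xi$ with the $\mathfrak{h}_2$-locus as a zero set, propagating the vanishing. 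Consequently $h^2\equiv0$ near $p$, so the only possible types near $p$ are $\mathfrak{h}_2$ (where $h\neq0$) and $\mathfrak{h}_1$ with $h=0$; shrinking $V$ so that $h\neq0$ throughout (possible since $h_p\neq 0$) leaves only the $\mathfrak{h}_2$-type. This closes the argument, and the same ODE/continuity reasoning shows $\lambda$ does not jump in forms (\ref{A1}), (\ref{A3}).

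The main obstacle is the treatment of the boundary stratum $Z=\{\operatorname{tr}(h^2)=0\}$: away from it the sign of a continuous function settles everything immediately, but near an $\mathfrak{h}_2$-point one needs the structural PDEs of a $(\kappa,\mu,\nu)$-space — specifically (\ref{ib3}), (\ref{ib4}) and (\ref{ib6}) — to rule out the possibility of $\mathfrak{h}_1$- or $\mathfrak{h}_3$-points accumulating at $p$; without the $(\kappa,\mu,\nu)$-hypothesis the statement would genuinely require the more delicate eigenvector-frame construction referenced to \cite{kupmur}, which is exactly why the authors say the proof is similar to that paper and omit the details.
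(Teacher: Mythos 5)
The paper does not actually contain its own proof of this lemma (it is deferred to \cite{kupmur}), so your argument has to stand on its own. Its continuity half is sound and is surely the core of any proof: since $h\xi =0$ and $h|_{\ker \eta }$ is traceless and self-adjoint, the single invariant $\operatorname{tr}(h^{2})=-2\det (h|_{\ker \eta })$ separates the types, the open sets $\{\operatorname{tr}(h^{2})>0\}$ and $\{\operatorname{tr}(h^{2})<0\}$ consist purely of $\mathfrak{h}_{1}$- and $\mathfrak{h}_{3}$-points respectively, and together with the interior of $\{\operatorname{tr}(h^{2})=0\}$ this already gives local constancy of the canonical form on an open \emph{dense} subset of $M$ --- which is what the subsequent case analysis (carried out on the open dense sets $U_{1}\cup U_{2}$, $\mathcal{U}$, $\mathcal{U}_{1}\cup \mathcal{U}_{2}$) actually uses.

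The gap is in your treatment of the $\mathfrak{h}_{2}$-stratum, i.e. in upgrading ``open dense'' to ``every point''. To show that an $\mathfrak{h}_{2}$-point has a whole neighborhood of $\mathfrak{h}_{2}$-points you invoke (\ref{ib3}), (\ref{ib4}) and (\ref{ib6}) of Proposition \ref{irem}; but that proposition holds only under the $(\kappa ,\mu ,\nu )$-nullity hypothesis (\ref{BI}), which the lemma does not assume --- it is stated for an arbitrary $3$-dimensional almost $\alpha $-para-Kenmotsu manifold. Moreover the lemma is needed precisely \emph{before} the nullity condition is available: in Theorem \ref{k mu vu} the $(\kappa ,\mu ,\nu )$-structure is the \emph{conclusion}, derived separately in each canonical-form case, so assuming it in order to fix the canonical form would be circular. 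Your closing sentence concedes exactly this, which means the proposal does not prove the stated lemma. Even granting the nullity hypothesis, the propagation step is only gestured at: the identity $\xi (\kappa )=-2(2\alpha +\nu )(\kappa +\alpha ^{2})$ kills $\kappa +\alpha ^{2}$ only along the single integral curve of $\xi $ through $p$, and to get vanishing on an open set you must combine it with $d\kappa \wedge \eta =0$ (constancy on the leaves of $\ker \eta $) and the local product structure of the foliation, which you do not carry out. The honest repair is either to weaken the conclusion to local constancy on an open dense subset --- which your continuity argument yields with no further work and which suffices for everything that follows --- or to reproduce for the boundary stratum the eigenframe construction of \cite{kupmur} that the authors are implicitly appealing to.
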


In a $3$-dimensional pseudo-Riemannian manifold case, the curvature tensor
can be written by%
\begin{equation}
R(X,Y)Z=g(Y,Z)QX-g(X,Z)QY+g(QY,Z)X-g(QX,Z)Y-\frac{r}{2}(g(Y,Z)X-g(X,Z)Y).
\label{THREE DIM CURVATURE}
\end{equation}%
for any $X,Y,Z\in \Gamma (TM).$

Using same procedure with \cite{PERRONE}, we have

\begin{lemma}
\label{K>-1}Let $(M,\phi ,\xi ,\eta ,g)$ be a $3$-dimensional almost $\alpha 
$-para-Kenmotsu manifold with $h$ of $\mathfrak{h}_{1}$\emph{\ }\textit{type}%
$.$Then for the covariant derivative on $\mathcal{U}_{1}$ the following
equations are valid 
\begin{eqnarray}
i)\text{ }\nabla _{e}e &=&\frac{1}{2\lambda }\left[ \sigma (e)-(\phi
e)(\lambda )\right] \phi e+\alpha \xi ,\text{ \ }ii)\text{ }\nabla _{e}\phi
e=\frac{1}{2\lambda }\left[ \sigma (e)-(\phi e)(\lambda )\right] e-\lambda
\xi ,\text{ }  \notag \\
\text{\ }iii)\text{ }\nabla _{e}\xi &=&\alpha e+\lambda \phi e,  \notag \\
iv)\text{ }\nabla _{\phi e}e &=&-\frac{1}{2\lambda }\left[ \sigma (\phi
e)+e(\lambda )\right] \phi e-\lambda \xi ,\text{ \ }v)\text{ }\nabla _{\phi
e}\phi e=-\frac{1}{2\lambda }\left[ \sigma (\phi e)+e(\lambda )\right] \phi
e-\alpha \xi ,\text{ \ }  \notag \\
vi)\nabla _{\phi e}\xi &=&\alpha \phi e-\lambda e  \notag \\
vii)\nabla _{\xi }e &=&a_{1}\phi e,\text{ \ }viii)\text{ }\nabla _{\xi }\phi
e=a_{1}e,\text{ \ \ }  \notag \\
ix)\text{ }[e,\xi ] &=&\alpha e+(\lambda -a_{1})\phi e,\text{ \ }x)\text{ }%
[\phi e,\xi ]=-(\lambda +a_{1})e+\alpha \phi e,\text{ \ }  \label{irem0} \\
xi)\text{ }[e,\phi e] &=&\frac{1}{2\lambda }\left[ \sigma (e)-(\phi
e)(\lambda )\right] e+\frac{1}{2\lambda }\left[ \sigma (\phi e)+e(\lambda )%
\right] \phi e,  \notag \\
xii)\nabla _{\xi }h &=&\xi (\lambda )s-2a_{1}h\phi ,\text{ \ \ }%
xiii)h^{2}-\alpha ^{2}\phi ^{2}=\frac{1}{2}S(\xi ,\xi )\phi ^{2}.  \notag
\end{eqnarray}%
where 
\begin{equation*}
\text{ }a_{1}=g(\text{ }\nabla _{\xi }e,\phi e),\text{ }\sigma =S(\xi
,.)_{\ker \eta }\text{\ .}
\end{equation*}
\end{lemma}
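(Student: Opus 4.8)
The plan is to work throughout in the local orthonormal $\phi$-basis $\{e,\phi e,\xi\}$ of $\mathfrak{h}_1$-type, so that $-g(e,e)=g(\phi e,\phi e)=g(\xi,\xi)=1$, $he=\lambda e$, $h\phi e=-\lambda\phi e$, and to compute every covariant derivative of a frame vector from two inputs only: the identity $\nabla\xi=\alpha\phi^2+\phi h$ of (\ref{7}), together with $\mathcal{A}\xi=0$, $h\xi=0$, $\phi h+h\phi=0$; and metric compatibility in the form $g(\nabla_XY,Z)=-g(\nabla_XZ,Y)$, which is available because all inner products among $e,\phi e,\xi$ are constants. Applying $\nabla\xi$ to $e$ and to $\phi e$ gives $iii)$ and $vi)$ at once (using $\phi^2e=e$ since $e\perp\xi$). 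For $\nabla_\xi e$, metric compatibility kills the $e$-component (from $\xi g(e,e)=0$) and the $\xi$-component (from $g(\nabla_\xi e,\xi)=-g(e,\nabla_\xi\xi)=0$), leaving only the coefficient $a_1=g(\nabla_\xi e,\phi e)$; this is $vii)$, and $viii)$ comes out the same way. The brackets $ix)$ and $x)$ then follow by subtracting $vii)$ from $iii)$ and $viii)$ from $vi)$.

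The identical bookkeeping shows $\nabla_ee,\nabla_e\phi e,\nabla_{\phi e}e,\nabla_{\phi e}\phi e$ are each determined up to a single unknown function: with $p=g(\nabla_ee,\phi e)$ and $w=g(\nabla_{\phi e}e,\phi e)$ one gets $\nabla_ee=p\,\phi e+\alpha\xi$, $\nabla_e\phi e=p\,e-\lambda\xi$, $\nabla_{\phi e}e=w\,\phi e-\lambda\xi$, $\nabla_{\phi e}\phi e=w\,e-\alpha\xi$. So $i),ii),iv),v)$ reduce to identifying $p$ and $w$, and this is the one step requiring input beyond linear algebra. I would get it from (\ref{SZETA}), which in dimension $3$ $(n=1)$ reads $\sigma(X)=S(X,\xi)=g(div(\phi h),X)$ for $X\in\ker\eta$. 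Computing $div(\phi h)=-(\nabla_e\phi h)e+(\nabla_{\phi e}\phi h)\phi e+(\nabla_\xi\phi h)\xi$ (the $e$-summand carries the sign $g(e,e)=-1$, and the last summand vanishes) with the frame derivatives above gives, after a short calculation, $-(2\lambda p+(\phi e)\lambda)\,e-(e\lambda+2\lambda w)\,\phi e+2\lambda^2\,\xi$. Taking inner products with $e$ and $\phi e$ yields $\sigma(e)=2\lambda p+(\phi e)\lambda$ and $\sigma(\phi e)=-e\lambda-2\lambda w$, hence $p=\frac{1}{2\lambda}\bigl(\sigma(e)-(\phi e)\lambda\bigr)$ and $w=-\frac{1}{2\lambda}\bigl(\sigma(\phi e)+e\lambda\bigr)$; substituting these into the four formulas is exactly $i)$ through $v)$, and $xi)$ is just $[e,\phi e]=\nabla_e\phi e-\nabla_{\phi e}e$.

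For $xii)$ I would write $h=\lambda s$, with $s$ the $(1,1)$-tensor given by $se=e$, $s\phi e=-\phi e$, $s\xi=0$, so $\nabla_\xi h=(\xi\lambda)\,s+\lambda\,\nabla_\xi s$; differentiating $s$ along $\xi$ and using $vii),viii)$ gives $(\nabla_\xi s)e=2a_1\phi e$, $(\nabla_\xi s)\phi e=-2a_1 e$, $(\nabla_\xi s)\xi=0$, that is $\nabla_\xi s=-2a_1\,s\phi$, whence $\nabla_\xi h=(\xi\lambda)\,s-2a_1\,h\phi$. For $xiii)$, the $\mathfrak{h}_1$-form of $h$ gives $h^2=\lambda^2\phi^2$ and $trh^2=2\lambda^2$, so (\ref{SZETAZETA}) with $n=1$ yields $S(\xi,\xi)=-2\alpha^2+2\lambda^2$, hence $h^2-\alpha^2\phi^2=(\lambda^2-\alpha^2)\phi^2=\frac12 S(\xi,\xi)\phi^2$.

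The only real obstacle is sign discipline: because $e$ is timelike, the antisymmetry of the lowered connection coefficients and, above all, the $\varepsilon$-weighted sum defining $div(\phi h)$ must be carried through with the correct factors of $-1$; and since the determination of $p$ and $w$ --- the whole content of $i)$--$v)$ --- rests on that single divergence computation, that is where the care is needed.
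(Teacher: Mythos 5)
Your proof is correct, and in substance it is the argument the paper intends (the paper omits the proof of this lemma, but its proofs of the analogous Lemmas \ref{de2} and \ref{lem3} for the $\mathfrak{h}_{2}$- and $\mathfrak{h}_{3}$-types follow exactly your scheme: $iii)$ and $vi)$ from $\nabla \xi =\alpha \phi ^{2}+\phi h$, the remaining derivatives reduced by metric compatibility to single unknown coefficients, and those coefficients pinned to $\sigma $ through the tensor $\phi h$). The one place you deviate is in how you extract $p$ and $w$: the paper compares $R(e,\phi e)\xi $ computed from the three-dimensional curvature decomposition (\ref{THREE DIM CURVATURE}), which gives $\sigma (\phi e)e-\sigma (e)\phi e$, with $(\nabla _{e}\phi h)\phi e-(\nabla _{\phi e}\phi h)e$ from (\ref{b3}); you instead use the divergence identity (\ref{SZETA}). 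Since $\phi h$ is symmetric and trace-free on $\ker \eta $, the two routes produce the identical pair of equations $\sigma (e)=2\lambda p+(\phi e)\lambda $, $\sigma (\phi e)=-e(\lambda )-2\lambda w$, so nothing is gained or lost; your $\varepsilon $-weighted divergence computation checks out sign by sign. One point worth recording: your derivation gives $\nabla _{\phi e}\phi e=w\,e-\alpha \xi $ (the $\phi e$-component must vanish because $g(\nabla _{\phi e}\phi e,\phi e)=\tfrac{1}{2}(\phi e)g(\phi e,\phi e)=0$), so the statement of $v)$ in the lemma, which places the coefficient $-\frac{1}{2\lambda }[\sigma (\phi e)+e(\lambda )]$ on $\phi e$ rather than on $e$, contains a typographical error; your version is the correct one, and it is also the one consistent with the stated formula $xi)$ for $[e,\phi e]$ and with the pattern of Lemma \ref{lem3}$\,v)$.
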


\begin{lemma}
\label{RICCI1}Let $(M,\phi ,\xi ,\eta ,g)$ be a $3$-dimensional almost $%
\alpha $-para-Kenmotsu manifold with $h$ of $\mathfrak{h}_{1}$\emph{\ }%
\textit{type}$.$Then the Ricci operator $Q$ is given by 
\begin{equation}
Q=(\frac{r}{2}+\alpha ^{2}-\lambda ^{2})I+(-\frac{r}{2}+3(\lambda
^{2}-\alpha ^{2}))\eta \otimes \xi -2\alpha \phi h-\phi (\nabla _{\xi
}h)+\sigma (\phi ^{2})\otimes \xi -\sigma (e)\eta \otimes e+\sigma (\phi
e)\eta \otimes \phi e.  \label{QH1}
\end{equation}
\end{lemma}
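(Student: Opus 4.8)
The plan is to run the three\-dimensional curvature identity (\ref{THREE DIM CURVATURE}) backwards. There the full curvature tensor is expressed through the Ricci operator $Q$ and the scalar curvature $r$, so by substituting a vector field we already control, namely $R(\cdot,\xi)\xi$, we can solve for $Q$. Concretely, I would set $Z=\xi$ in (\ref{THREE DIM CURVATURE}) and then specialize $Y=\xi$. Using $\eta(\xi)=1$, $g(QX,\xi)=S(X,\xi)$ and the Jacobi operator $lX=R(X,\xi)\xi$, this gives $lX=QX-\eta(X)Q\xi+S(\xi,\xi)X-S(X,\xi)\xi-\tfrac{r}{2}(X-\eta(X)\xi)$, hence
$$QX=lX-\Big(S(\xi,\xi)-\tfrac{r}{2}\Big)X+\eta(X)Q\xi+S(X,\xi)\xi-\tfrac{r}{2}\eta(X)\xi .$$
So it only remains to describe $l$, $S(\xi,\xi)$ and $Q\xi$ for an $\mathfrak{h}_{1}$-type structure.

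For $l$, Theorem \ref{R2} gives $R(\xi,X)\xi=\alpha^{2}\phi^{2}X+2\alpha\phi hX-h^{2}X+\phi(\nabla_{\xi}h)X$, whence $lX=-\alpha^{2}\phi^{2}X-2\alpha\phi hX+h^{2}X-\phi(\nabla_{\xi}h)X$; since $he=\lambda e$, $h\phi e=-\lambda\phi e$, $h\xi=0$ in the $\mathfrak{h}_{1}$ case we have $h^{2}=\lambda^{2}\phi^{2}$, so $lX=(\lambda^{2}-\alpha^{2})\phi^{2}X-2\alpha\phi hX-\phi(\nabla_{\xi}h)X$. From (\ref{SZETAZETA}) with $n=1$ and $tr(h^{2})=2\lambda^{2}$ we get $S(\xi,\xi)=2(\lambda^{2}-\alpha^{2})$. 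Finally, expanding $Q\xi$ in the local $\phi$-basis $\{e,\phi e,\xi\}$ and remembering $g(e,e)=-1$ yields $Q\xi=-\sigma(e)\,e+\sigma(\phi e)\,\phi e+S(\xi,\xi)\,\xi$ with $\sigma=S(\xi,\cdot)|_{\ker\eta}$, and likewise $S(X,\xi)=\sigma(\phi^{2}X)+S(\xi,\xi)\eta(X)$. (One could instead read $l$ off the explicit connection coefficients of Lemma \ref{K>-1}, but using Theorem \ref{R2} is shorter.)

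Substituting all of this into the displayed expression for $QX$, using $\phi^{2}=I-\eta\otimes\xi$ to split each $\phi^{2}$-summand, and collecting the coefficient of $I$, of $\eta\otimes\xi$, of $\phi h$, of $\phi(\nabla_{\xi}h)$, and the $\sigma$-terms $(\sigma\circ\phi^{2})\otimes\xi$, $\eta\otimes e$, $\eta\otimes\phi e$, one obtains exactly (\ref{QH1}); the coefficients of $I$ and of $\eta\otimes\xi$ add up to $\tfrac{r}{2}+\alpha^{2}-\lambda^{2}$ and $-\tfrac{r}{2}+3(\lambda^{2}-\alpha^{2})$ respectively. The computation is entirely routine, and the only point requiring care is sign bookkeeping: the timelike basis vector $e$ carries a minus sign in expansions, and one must track which pieces of $l$, of $-S(\xi,\xi)X$, of $\eta(X)Q\xi$ and of $S(X,\xi)\xi$ contribute to the identity term versus the $\eta\otimes\xi$ term. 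Beyond this there is no genuine obstacle.
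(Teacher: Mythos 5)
Your proposal is correct and follows essentially the same route the paper uses for the companion Lemmas \ref{RICCI2} and \ref{le3}: set $Y=Z=\xi$ in (\ref{THREE DIM CURVATURE}), solve for $QX$, substitute (\ref{iremm}) and (\ref{SZETAZETA}) with $h^{2}=\lambda^{2}\phi^{2}$, and expand $Q\xi$ and $S(X,\xi)$ in the $\phi$-basis. The sign bookkeeping you describe checks out, and the coefficients of $I$ and $\eta\otimes\xi$ come out as $\tfrac{r}{2}+\alpha^{2}-\lambda^{2}$ and $-\tfrac{r}{2}+3(\lambda^{2}-\alpha^{2})$ as claimed.
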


\begin{lemma}
\label{de2}Let $(M,\phi ,\xi ,\eta ,g)$ be a $3$-dimensional almost $\alpha $%
-para-Kenmotsu manifold with $h$ of $\mathfrak{h}_{2}$ type. Then for the
covariant derivative on $\mathcal{U}$ the following equations are valid 
\begin{eqnarray}
i)\text{ }\nabla _{e_{1}}e_{1} &=&-b_{1}e_{1}+\xi ,\text{ \ }ii)\text{ }%
\nabla _{e_{1}}e_{2}=b_{1}e_{2}-\alpha \xi ,\text{ \ }iii)\text{ }\nabla
_{e_{1}}\xi =\alpha e_{1}-e_{2},  \notag \\
iv)\text{ }\nabla _{e_{2}}e_{1} &=&-b_{2}e_{1}-\alpha \xi ,\text{ \ }v)\text{
}\nabla _{e_{2}}e_{2}=b_{2}e_{2},\text{ \ }vi)\text{ }\nabla _{e_{2}}\xi
=\alpha e_{2},  \notag \\
vii)\text{ }\nabla _{\xi }e_{1} &=&a_{2}e_{1},\text{ \ }viii)\text{ }\nabla
_{\xi }e_{2}=-a_{2}e_{2},\text{ \ \ }  \notag \\
ix)\text{ }[e_{1},\xi ] &=&(\alpha -a_{2})e_{1}-e_{2},\text{ \ }x)\text{ }%
[e_{2},\xi ]=(\alpha +a_{2})e_{2},\text{ \ }  \label{iremm0} \\
xi)\text{ }[e_{1},e_{2}] &=&b_{2}e_{1}+b_{1}e_{2}.  \notag \\
xii)\nabla _{\xi }h &=&-2a_{2}h\phi ,\text{ \ }xiii)\text{\ }h^{2}=0.  \notag
\end{eqnarray}%
where $a_{2}=g(\nabla _{\xi }e_{1},e_{2}),$ $b_{1}=g(\nabla
_{e_{1}}e_{2},e_{1})$ and $b_{2}=g(\nabla _{e_{2}}e_{2},e_{1})=-\frac{1}{2}%
\sigma (e_{1}).$
\end{lemma}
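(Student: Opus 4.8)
The plan is to run, for the $\mathfrak{h}_2$-type frame, the scheme that produced Lemma \ref{K>-1} (cf. \cite{kupmur}). Work throughout in the pseudo-orthonormal frame $\{e_1,e_2,e_3=\xi\}$, whose only nonzero products are $g(e_1,e_2)=g(\xi,\xi)=1$, with $he_1=e_2$, $he_2=h\xi=0$ and, after fixing the sign convention (the other being symmetric), $\phi e_1=e_1$, $\phi e_2=-e_2$. The backbone is the universal identity $\nabla\xi=\alpha\phi^2+\phi h$ from (\ref{7}): evaluating it on $e_1,e_2,\xi$, using $\phi^2 e_i=e_i$ for $i=1,2$, yields $iii)$, $vi)$ and $\nabla_\xi\xi=0$ directly. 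Since every inner product of two frame vectors is constant, metric compatibility forces $g(\nabla_\xi e_1,e_1)=g(\nabla_\xi e_1,\xi)=g(\nabla_\xi e_2,e_2)=g(\nabla_\xi e_2,\xi)=0$ and $g(\nabla_\xi e_1,e_2)+g(\nabla_\xi e_2,e_1)=0$, so with $a_2:=g(\nabla_\xi e_1,e_2)$ we get $vii)$, $viii)$; subtracting these from $\nabla_{e_1}\xi$ and $\nabla_{e_2}\xi$ gives the brackets $ix)$, $x)$.

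For $i)$, $ii)$, $iv)$, $v)$ I would use that, as noted in Section 5, every $3$-dimensional almost $\alpha$-paracosymplectic manifold has para-Kaehler leaves, hence satisfies (\ref{MEHMET}) by Proposition \ref{alpha}. Substituting the frame into the right-hand side of (\ref{MEHMET}) gives $(\nabla_{e_1}\phi)e_1=\xi$, $(\nabla_{e_1}\phi)e_2=\alpha\xi$, $(\nabla_{e_2}\phi)e_1=-\alpha\xi$, $(\nabla_{e_2}\phi)e_2=0$; expanding the left-hand sides as $\nabla_{e_i}(\phi e_j)-\phi\nabla_{e_i}e_j$ and using metric compatibility together with the already-known $\xi$-components $g(\nabla_{e_i}e_j,\xi)=-g(e_j,\nabla_{e_i}\xi)$ then determines every $\nabla_{e_i}e_j$ with $i,j\in\{1,2\}$ in terms of the two functions $b_1:=g(\nabla_{e_1}e_2,e_1)$ and $b_2:=g(\nabla_{e_2}e_2,e_1)$. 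Then $xi)$ is torsion-freeness $[e_1,e_2]=\nabla_{e_1}e_2-\nabla_{e_2}e_1$; $xii)$ follows by reading $(\nabla_\xi h)e_1=\nabla_\xi e_2-h(\nabla_\xi e_1)=-2a_2 e_2$ and $(\nabla_\xi h)e_2=(\nabla_\xi h)\xi=0$ against $-2a_2 h\phi$; and $xiii)$ is immediate from $h^2 e_1=he_2=0$.

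It remains to identify $b_2$ with $-\frac{1}{2}\sigma(e_1)$. Feeding the connection formulas just obtained into $div(\phi h)=(\nabla_{e_1}\phi h)e_2+(\nabla_{e_2}\phi h)e_1+(\nabla_\xi\phi h)\xi$ (the correct frame pairing, since $g^{12}=g^{21}=g^{33}=1$) gives $div(\phi h)=-2b_2 e_2$; then (\ref{SZETA}) with $n=1$ and $\eta(e_1)=0$ yields $\sigma(e_1)=S(\xi,e_1)=g(div(\phi h),e_1)=-2b_2$.

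I do not expect a conceptual obstacle; the only real difficulty is the bookkeeping in a frame that is merely pseudo-orthonormal — one has to track carefully which products vanish, keep the index raising $g^{12}=g^{21}=g^{33}=1$ in every trace and divergence, and never confuse the $e_1$- and $e_2$-components. Fixing the sign normalization at the outset and fully expanding each $\nabla_{e_i}e_j$ in the frame before imposing the constraints keeps this routine.
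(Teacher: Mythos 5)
Your proposal is correct and follows essentially the same route as the paper: items $i)$--$xi)$ come from (\ref{7}) together with metric compatibility in the pseudo-orthonormal frame, and $xii)$, $xiii)$ are the same direct frame computations. Two small remarks: the appeal to (\ref{MEHMET}) (via para-Kaehler leaves) for $i)$, $ii)$, $iv)$, $v)$ is superfluous, since metric compatibility combined with the already-established $iii)$, $vi)$ pins down every component of $\nabla_{e_i}e_j$ in terms of $b_1$, $b_2$ alone, which is all the paper uses; and for $b_2=-\tfrac{1}{2}\sigma(e_1)$ you compute $div(\phi h)=-2b_2e_2$ and invoke (\ref{SZETA}), whereas the paper equates the two expressions for $R(e_1,e_2)\xi$ obtained from (\ref{THREE DIM CURVATURE}) and (\ref{b3}) --- the same computation in different packaging, and your version checks out.
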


\begin{proof}
By $\nabla \xi =-\alpha ^{2}\phi +\phi h$, we obtain $iii),vi).$

Using pseudo-orthonormal basis $\{e_{1},e_{2},e_{3}=\xi \}$ with $\phi
e_{1}=e_{1},$ $\phi e_{2}=-e_{2}$, $\phi e_{3}=0$ we have%
\begin{eqnarray*}
\nabla _{e_{1}}e_{2} &=&g(\nabla _{e_{1}}e_{2},e_{2})e_{1}+g(\nabla
_{e_{1}}e_{2},e_{1})e_{2}+g(\nabla _{e_{1}}e_{2},\xi )\xi \\
&=&g(\nabla _{e_{1}}e_{2},e_{1})e_{2}-g(e_{2},\nabla _{e_{1}}\xi )\xi \\
&&\overset{iii)}{=}g(\nabla _{e_{1}}e_{2},e_{1})e_{2}-\alpha \xi \\
&=&b_{1}e_{2}-\alpha \xi .
\end{eqnarray*}

The proofs of other covariant derivative equalities are similar to $ii).$

Putting $X=e_{1},$ $Y=e_{2}$ and \ $Z=\xi $ in the equation (\ref{THREE DIM
CURVATURE}), we have%
\begin{equation}
R(e_{1},e_{2})\xi =-\sigma (e_{1})e_{2}+\sigma (e_{2})e_{1}.  \label{iremm2}
\end{equation}%
On the other hand, by using (\ref{b3}), we get 
\begin{eqnarray}
R(e_{1},e_{2})\xi &=&(\nabla _{e_{1}}\phi h)e_{2}-(\nabla _{e_{2}}\phi
h)e_{1}  \notag \\
&=&2b_{2}e_{2}.  \label{iremm3}
\end{eqnarray}%
Comparing (\ref{iremm3}) with (\ref{iremm2}), we obtain%
\begin{equation}
\sigma (e_{1})=-2b_{2},~\ \sigma (e_{2})=0=S(\xi ,e_{2}).\   \label{iremm3a}
\end{equation}%
Hence, the function $b_{2}$ is obtained from the last equation.
\end{proof}

\begin{lemma}
\label{RICCI2}Let $(M,\phi ,\xi ,\eta ,g)$ be a $3$-dimensional almost $%
\alpha $-para-Kenmotsu manifold with $h$ of $\mathfrak{h}_{2}$\emph{\ }%
\textit{type}$.$Then the Ricci operator $Q$ is given by 
\begin{equation}
Q=(\frac{r}{2}+\alpha ^{2})I-(\frac{r}{2}+3\alpha ^{2})\eta \otimes \xi
-2\alpha \phi h-\phi (\nabla _{\xi }h)+\sigma (\phi ^{2})\otimes \xi +\sigma
(e_{1})\eta \otimes e_{2}.  \label{QH2}
\end{equation}
\end{lemma}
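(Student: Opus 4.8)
The plan is to reproduce, in the $\mathfrak{h}_{2}$ setting, the argument used for Lemma \ref{RICCI1}: in dimension three the whole curvature tensor is determined by the Ricci operator and the scalar curvature through (\ref{THREE DIM CURVATURE}), so one can extract $Q$ from an identity for $R(X,\xi)\xi$. First I would put $Y=Z=\xi$ in (\ref{THREE DIM CURVATURE}); using $g(\xi,\xi)=1$, $S(X,Y)=g(QX,Y)$ and $X-\eta(X)\xi=\phi^{2}X$ this becomes
\begin{equation*}
lX=R(X,\xi)\xi=QX-\eta(X)Q\xi+S(\xi,\xi)X-S(X,\xi)\xi-\frac{r}{2}\phi^{2}X,
\end{equation*}
hence
\begin{equation*}
QX=lX+\eta(X)Q\xi-S(\xi,\xi)X+S(X,\xi)\xi+\frac{r}{2}\phi^{2}X .
\end{equation*}
Thus everything reduces to identifying the four ingredients $l$, $Q\xi$, $S(\xi,\xi)$ and $S(\cdot,\xi)$ when $h$ is of $\mathfrak{h}_{2}$-type.

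These ingredients are read off from the earlier results. From (\ref{iremm}) one has $lX=-\alpha^{2}\phi^{2}X-2\alpha\phi hX+h^{2}X-\phi(\nabla_{\xi}h)X$, and the $\mathfrak{h}_{2}$ normal form satisfies $h^{2}=0$ (Lemma \ref{de2}), so $lX=-\alpha^{2}\phi^{2}X-2\alpha\phi hX-\phi(\nabla_{\xi}h)X$. By (\ref{SZETAZETA}) with $n=1$ and $trh^{2}=0$ we get $S(\xi,\xi)=-2\alpha^{2}$, and then $S(X,\xi)=S(\phi^{2}X,\xi)+\eta(X)S(\xi,\xi)=\sigma(\phi^{2}X)-2\alpha^{2}\eta(X)$ straight from the definition $\sigma=S(\xi,\cdot)_{\ker\eta}$. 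For $Q\xi$ I would expand it in the pseudo-orthonormal $\phi$-basis $\{e_{1},e_{2},\xi\}$ (with $g(e_{1},e_{2})=g(\xi,\xi)=1$ and $g(e_{1},e_{1})=g(e_{2},e_{2})=0$), obtaining $Q\xi=g(Q\xi,e_{2})e_{1}+g(Q\xi,e_{1})e_{2}+g(Q\xi,\xi)\xi=\sigma(e_{2})e_{1}+\sigma(e_{1})e_{2}-2\alpha^{2}\xi$; since $\sigma(e_{2})=S(\xi,e_{2})=0$ by (\ref{iremm3a}), this is $Q\xi=\sigma(e_{1})e_{2}-2\alpha^{2}\xi$.

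Substituting these expressions into the formula for $QX$, replacing $\phi^{2}X$ by $X-\eta(X)\xi$, and collecting the coefficients of $X$, of $\eta(X)\xi$, of $\phi hX$, of $\phi(\nabla_{\xi}h)X$, of $\sigma(\phi^{2}X)\xi$ and of $\sigma(e_{1})\eta(X)e_{2}$ yields precisely (\ref{QH2}); this is the very same bookkeeping that produces (\ref{QH1}), now simplified by $h^{2}=0$, $trh^{2}=0$ and $\sigma(e_{2})=0$. I expect the only mildly delicate point to be the correct lowering/raising of indices against the non-diagonal pseudo-orthonormal metric when turning $Q\xi$ and the $1$-form $\sigma$ into genuine tensor fields, together with the observation that the auxiliary sign ambiguity $\phi e_{1}=\pm e_{1}$, $\phi e_{2}=\mp e_{2}$ of the $\mathfrak{h}_{2}$ normal form cannot enter the manifestly tensorial final identity; Lemma \ref{de2} is used only to supply $h^{2}=0$ and $\sigma(e_{2})=0$, not for the bulk of the computation.
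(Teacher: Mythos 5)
Your proposal is correct and follows essentially the same route as the paper: solve (\ref{THREE DIM CURVATURE}) with $Z=\xi$ for $QX$, then substitute $l$ from (\ref{iremm}) together with $h^{2}=0$, $S(\xi,\xi)=-2\alpha^{2}$ from (\ref{SZETAZETA}), the splitting $S(X,\xi)=S(\phi^{2}X,\xi)+\eta(X)S(\xi,\xi)$, and the basis expansion $Q\xi=\sigma(e_{1})e_{2}+S(\xi,\xi)\xi$ using $\sigma(e_{2})=0$ from (\ref{iremm3a}). The bookkeeping you describe reproduces (\ref{QH2}) exactly as in the paper's proof.
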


\begin{proof}
From (\ref{THREE DIM CURVATURE}), we obtain%
\begin{equation*}
R(X,\xi )\xi =S(\xi ,\xi )X-S(X,\xi )\xi +QX-\eta (X)Q\xi -\frac{r}{2}%
(X-\eta (X)\xi ),
\end{equation*}%
for any vector field $X.$ By (\ref{iremm}) and (\ref{SZETAZETA}) the last
equation reduces to%
\begin{equation}
QX=\frac{1}{2}S(\xi ,\xi )\phi ^{2}X-2\alpha \phi hX-\phi (\nabla _{\xi
}h)X-S(\xi ,\xi )X+S(X,\xi )\xi +\eta (X)Q\xi +\frac{r}{2}(X-\eta (X)\xi ).
\label{QX1}
\end{equation}%
By setting $S(X,\xi )=S(\phi ^{2}X,\xi )+\eta (X)S(\xi ,\xi )$ in (\ref{QX1}%
), we have 
\begin{equation}
QX=\frac{S(\xi ,\xi )}{2}\phi ^{2}X-2\alpha \phi hX-\phi (\nabla _{\xi
}h)X-S(\xi ,\xi )X+S(\phi ^{2}X,\xi )\xi +\eta (X)S(\xi ,\xi )\xi +\eta
(X)Q\xi +\frac{r}{2}\phi ^{2}X.  \label{iremmm88}
\end{equation}%
On the other hand, the Ricci tensor $S$ can be written with respect to the
orthonormal basis $\{e_{1},e_{2},\xi \}$ as following 
\begin{equation}
Q\xi =\sigma (e_{1})e_{2}+S(\xi ,\xi )\xi  \label{iremmm99}
\end{equation}%
Using (\ref{iremmm99}) in (\ref{iremmm88}), we get%
\begin{eqnarray}
QX &=&\frac{1}{2}\left( r+2\alpha ^{2}\right) X-\frac{1}{2}(6\alpha
^{2}+r)\eta (X)\xi -2\alpha \phi h  \label{iremmm10} \\
&&-\phi (\nabla _{\xi }h)X+\sigma (\phi ^{2}X)\xi +\eta (X)\sigma
(e_{1})e_{2}+  \notag
\end{eqnarray}%
for arbitrary vector field $X$. This ends the proof.
\end{proof}

\begin{lemma}
\label{lem3}Let $(M,\phi ,\xi ,\eta ,g)$ be a $3$-dimensional almost $\alpha 
$-para-Kenmotsu manifold with $h$ of $\mathfrak{h}_{3}$\emph{\ }\textit{type}%
. Then for the covariant derivative on $\mathcal{U}_{1}$ the following
equations are valid 
\begin{eqnarray}
i)\text{ }\nabla _{e}e &=&b_{3}\phi e+(\alpha +\lambda )\xi ,\text{ \ }ii)%
\text{ }\nabla _{e}\phi e=b_{3}e,\text{ \ }iii)\text{ }\nabla _{e}\xi
=(\alpha +\lambda )e,  \notag \\
iv)\text{ }\nabla _{\phi e}e &=&b_{4}\phi e,\text{ \ }v)\text{ }\nabla
_{\phi e}\phi e=b_{4}e+(\lambda -\alpha )\xi ,\text{ \ }vi)\text{ }\nabla
_{\phi e}\xi =-(\lambda -\alpha )\phi e,  \notag \\
vii)\text{ }\nabla _{\xi }e &=&a_{3}\phi e,\text{ \ }viii)\text{ }\nabla
_{\xi }\phi e=a_{3}e,\text{ \ \ }  \notag \\
ix)\text{ }[e,\xi ] &=&(\alpha +\lambda )e-a_{3}\phi e,\text{ \ }x)\text{ }%
[\phi e,\xi ]=-a_{3}e-(\lambda -\alpha )\phi e,\text{ \ }  \label{iremmm0} \\
xi)\text{ }[e,\phi e] &=&b_{3}e-b_{4}\phi e,  \notag \\
xii)\nabla _{\xi }h &=&\xi (\lambda )s-2a_{3}h\phi ,\text{ \ \ }%
xiii)h^{2}-\alpha ^{2}\phi ^{2}=\frac{1}{2}S(\xi ,\xi )\phi ^{2}.  \notag
\end{eqnarray}%
where $a_{3}=g(\nabla _{\xi }e,\phi e),b_{3}=-\frac{1}{2\lambda }\left[
\sigma (\phi e)+(\phi e)(\lambda )\right] $ and $b_{4}=\frac{1}{2\lambda }%
\left[ \sigma (e)-e(\lambda )\right] .$
\end{lemma}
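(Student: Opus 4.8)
The plan is to recover the Levi--Civita connection in a local smooth orthonormal $\phi$-basis $\{e,\phi e,\xi\}$ on $\mathcal{U}_1$ with $he=\lambda\phi e$, $h\phi e=-\lambda e$, $h\xi=0$ and $-g(e,e)=g(\phi e,\phi e)=g(\xi,\xi)=1$, proceeding exactly as in the proofs of Lemma \ref{de2} and of the $\mathfrak{h}_1$-case Lemma \ref{K>-1}. First I would compute the three derivatives of $\xi$ from (\ref{7}): inserting the present form of $h$, $\nabla_e\xi=\alpha\phi^2e+\phi he=(\alpha+\lambda)e$, $\nabla_{\phi e}\xi=\alpha\phi e+\phi h\phi e=(\alpha-\lambda)\phi e$ and $\nabla_\xi\xi=0$, which is $iii)$ and $vi)$. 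The remaining six covariant derivatives are then obtained purely from metric compatibility (so that $g(\nabla_Xe,e)=g(\nabla_X\phi e,\phi e)=g(\nabla_X\xi,\xi)=0$) together with $\nabla_\xi\phi=0$ (Proposition \ref{properties2}): writing each $\nabla_XY$ in the frame, pairing with $e,\phi e,\xi$, and substituting the now-known $\nabla_e\xi$, $\nabla_{\phi e}\xi$, $\nabla_\xi\xi$, one checks that everything is forced except the three scalar functions $b_3=g(\nabla_ee,\phi e)$, $b_4=g(\nabla_{\phi e}e,\phi e)$ and $a_3=g(\nabla_\xi e,\phi e)$ (for instance $\nabla_e\phi e=b_3 e$ and $\nabla_\xi\phi e=a_3 e$). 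This yields $i),ii),iv),v),vii),viii)$, and then $ix),x),xi)$ follow at once from $[X,Y]=\nabla_XY-\nabla_YX$.

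It remains to express $b_3,b_4$ through $\sigma$ and $d\lambda$, for which I would compute $R(e,\phi e)\xi$ in two ways, as in the proof of Lemma \ref{de2}. Since $g(e,\xi)=g(\phi e,\xi)=0$, the $3$-dimensional curvature formula (\ref{THREE DIM CURVATURE}) with $(X,Y,Z)=(e,\phi e,\xi)$ reduces to $R(e,\phi e)\xi=\sigma(\phi e)\,e-\sigma(e)\,\phi e$; and since $\eta(e)=\eta(\phi e)=0$, identity (\ref{b3}) reduces to $R(e,\phi e)\xi=(\nabla_e\phi h)\phi e-(\nabla_{\phi e}\phi h)e$. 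Expanding the latter by the Leibniz rule, using $i),ii),iv),v)$ and the consequences $(\phi h)e=\lambda e$, $(\phi h)\phi e=-\lambda\phi e$, $(\phi h)\xi=0$, one gets $R(e,\phi e)\xi=\bigl(-2\lambda b_3-(\phi e)(\lambda)\bigr)e-\bigl(e(\lambda)+2\lambda b_4\bigr)\phi e$. Matching the two expressions and dividing by $\lambda\neq0$ on $\mathcal{U}_1$ gives $b_3=-\tfrac{1}{2\lambda}\bigl[\sigma(\phi e)+(\phi e)(\lambda)\bigr]$ and $b_4=\tfrac{1}{2\lambda}\bigl[\sigma(e)-e(\lambda)\bigr]$. (Alternatively, one may read these off from $S(X,\xi)=-2n\alpha^{2}\eta(X)+g(\operatorname{div}\phi h,X)$ in (\ref{SZETA}) after writing $\operatorname{div}\phi h$ in the frame $\{e,\phi e,\xi\}$.)

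For $xii)$ I would compute $\nabla_\xi h$ directly on the frame: from $vii),viii)$ and $he=\lambda\phi e$, $h\phi e=-\lambda e$ one obtains $(\nabla_\xi h)e=\xi(\lambda)\phi e+2\lambda a_3e$, $(\nabla_\xi h)\phi e=-\xi(\lambda)e-2\lambda a_3\phi e$ and $(\nabla_\xi h)\xi=0$, which is exactly $\xi(\lambda)s-2a_3\,h\phi$ with $h=\lambda s$, $s$ as in Lemma \ref{K>-1}. Finally $xiii)$ is immediate: $he=\lambda\phi e$ and $h\phi e=-\lambda e$ give $h^{2}=-\lambda^{2}\phi^{2}$, hence $\operatorname{tr}h^{2}=-2\lambda^{2}$, so (\ref{SZETAZETA}) yields $S(\xi,\xi)=-2\alpha^{2}-2\lambda^{2}$ and therefore $h^{2}-\alpha^{2}\phi^{2}=-(\lambda^{2}+\alpha^{2})\phi^{2}=\tfrac12 S(\xi,\xi)\phi^{2}$.

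The computation is otherwise routine; the two places that need care are the Lorentzian sign $g(e,e)=-1$, which flips signs when one extracts frame components and when one computes traces and divergences, and the double evaluation of $R(e,\phi e)\xi$ --- reconciling the curvature formula (\ref{THREE DIM CURVATURE}) with the structural identity (\ref{b3}) is the only genuinely substantive step, as it is precisely what converts the free connection coefficients $b_3,b_4$ into explicit expressions in $\sigma$ and $d\lambda$.
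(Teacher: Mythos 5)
Your proposal is correct and follows essentially the same route as the paper: derive $iii)$ and $vi)$ from $\nabla\xi=\alpha\phi^{2}+\phi h$, recover the remaining connection coefficients by metric compatibility in the frame $\{e,\phi e,\xi\}$, and pin down $b_{3},b_{4}$ by evaluating $R(e,\phi e)\xi$ both via the three-dimensional curvature decomposition (\ref{THREE DIM CURVATURE}) and via the structural identity (\ref{b3}). Your explicit verification of $xii)$ and $xiii)$ (via the Leibniz rule for $\nabla_{\xi}h$ and via $h^{2}=-\lambda^{2}\phi^{2}$ together with (\ref{SZETAZETA})) is a welcome addition, since the paper's proof leaves those two items implicit.
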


\begin{proof}
By $\nabla \xi =\alpha \phi ^{2}+\phi h$, we have $iii),vi).$

Using $\phi $-basis, we\ have%
\begin{eqnarray*}
\nabla _{\xi }\phi e &=&-g(\nabla _{\xi }\phi e,e)e+g(\nabla _{\xi }\phi
e,\phi e)\phi e+g(\nabla _{\xi }\phi e,\xi )\xi \\
&=&g(\phi e,\nabla _{\xi }e)e=a_{3}e,
\end{eqnarray*}

So we prove $viii)$ . The proofs of other covariant derivative equalities
are similar to $viii).$

Setting $X=e,$ $Y=\phi e,$ \ $Z=\xi $ in the equation (\ref{THREE DIM
CURVATURE}), we have%
\begin{equation*}
R(e,\phi e)\xi =-g(Qe,\xi )\phi e+g(Q\phi e,\xi )e.
\end{equation*}%
Since $\sigma (X)$ $=$ $g(Q\xi ,X),$ we have 
\begin{equation}
R(e,\phi e)\xi =-\sigma (e)\phi e+\sigma (\phi e)e.  \label{iremmm2}
\end{equation}%
On the other hand, by using (\ref{b3}), we have 
\begin{eqnarray}
R(e,\phi e)\xi &=&(\nabla _{e}\phi h)\phi e-(\nabla _{\phi e}\phi h)e  \notag
\\
&=&(-2b_{3}\lambda -(\phi e)(\lambda ))e+(-2b_{4}\lambda -e(\lambda ))\phi e.
\label{iremmm3}
\end{eqnarray}%
Comparing (\ref{iremmm3}) with (\ref{iremmm2}), we get%
\begin{equation*}
\sigma (e)=e(\lambda )+2b_{4}\lambda ,~\ \sigma (\phi e)=-(\phi e)(\lambda
)-2b_{3}\lambda .\ 
\end{equation*}%
Hence, the functions $b_{3}$ and $b_{4}$ are obtained from the last equation.
\end{proof}

\begin{lemma}
\label{le3}Let $(M,\phi ,\xi ,\eta ,g)$ be a $3$-dimensional almost $\alpha $%
-para-Kenmotsu manifold with $h$ of $\mathfrak{h}_{3}$\emph{\ }\textit{type}%
. Then the Ricci operator $Q$ is given by%
\begin{equation}
Q=a\text{ }I+b\eta \otimes \xi -2\alpha \phi h-\phi (\nabla _{\xi }h)+\sigma
(\phi ^{2})\otimes \xi -\sigma (e)\eta \otimes e+\sigma (\phi e)\eta \otimes
\phi e,  \label{iremmm7}
\end{equation}%
where $a$ and $b$ are smooth functions defined by $a$ $=\alpha ^{2}+\lambda
^{2}+\frac{r}{2}$ and $b=-3(\lambda ^{2}+\alpha ^{2})-\frac{r}{2},$
respectively.
\end{lemma}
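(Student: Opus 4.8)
The plan is to imitate the proof of Lemma \ref{RICCI2} (the $\mathfrak h_{2}$ case), feeding in the $\mathfrak h_{3}$-specific data supplied by Lemma \ref{lem3}. First I would use the three-dimensional curvature identity (\ref{THREE DIM CURVATURE}): putting $Y=Z=\xi$ there gives
\begin{equation*}
R(X,\xi)\xi = QX-\eta(X)Q\xi+S(\xi,\xi)X-S(X,\xi)\xi-\frac{r}{2}\bigl(X-\eta(X)\xi\bigr),
\end{equation*}
so that $QX=R(X,\xi)\xi-S(\xi,\xi)X+S(X,\xi)\xi+\eta(X)Q\xi+\frac{r}{2}(X-\eta(X)\xi)$. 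To make the right-hand side explicit I need $R(X,\xi)\xi$ expressed through $h$ and $\nabla_{\xi}h$; combining (\ref{iremm}) (and $\nabla_{\xi}\phi=0$) with the identity $h^{2}=\bigl(\alpha^{2}+\frac{1}{2}S(\xi,\xi)\bigr)\phi^{2}$, which is part $xiii)$ of Lemma \ref{lem3}, yields
\begin{equation*}
R(X,\xi)\xi=\frac{1}{2}S(\xi,\xi)\phi^{2}X-2\alpha\phi hX-\phi(\nabla_{\xi}h)X,
\end{equation*}
exactly as in the $\mathfrak h_{2}$ computation.

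Next I would compute the remaining data in the $\mathfrak h_{3}$ frame $\{e,\phi e,\xi\}$. From $he=\lambda\phi e$, $h\phi e=-\lambda e$ and $h\xi=0$ one gets $h^{2}=-\lambda^{2}\phi^{2}$, so comparison with part $xiii)$ of Lemma \ref{lem3} gives $S(\xi,\xi)=-2(\alpha^{2}+\lambda^{2})$. For $Q\xi$ I use that, since $g(e,e)=-1$ and $g(\phi e,\phi e)=1$, any $v\in\ker\eta$ expands as $v=-g(v,e)e+g(v,\phi e)\phi e$; hence, writing $\sigma(X)=g(Q\xi,X)$ for $X\in\ker\eta$,
\begin{equation*}
Q\xi=-\sigma(e)e+\sigma(\phi e)\phi e+S(\xi,\xi)\xi .
\end{equation*}
Substituting this, together with $S(X,\xi)=\sigma(\phi^{2}X)+\eta(X)S(\xi,\xi)$, into the formula for $QX$ and collecting terms, the coefficient of $X$ becomes $-\frac{1}{2}S(\xi,\xi)+\frac{r}{2}=\alpha^{2}+\lambda^{2}+\frac{r}{2}=a$, the coefficient of $\eta(X)\xi$ becomes $\frac{3}{2}S(\xi,\xi)-\frac{r}{2}=-3(\alpha^{2}+\lambda^{2})-\frac{r}{2}=b$, and the surviving terms assemble into $-2\alpha\phi h-\phi(\nabla_{\xi}h)+\sigma(\phi^{2})\otimes\xi-\sigma(e)\,\eta\otimes e+\sigma(\phi e)\,\eta\otimes\phi e$, which is precisely (\ref{iremmm7}).

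The computation is routine once these ingredients are in hand, and for the term collecting I would simply refer to the proof of Lemma \ref{RICCI2}, since the $\mathfrak h_{3}$ data enters only through the value of $S(\xi,\xi)$ and the expression for $Q\xi$. The only points needing genuine care are the minus sign in the $\ker\eta$-expansion of $Q\xi$ (forced by $g(e,e)=-1$) and keeping the curvature convention $R(X,\xi)\xi=-R(\xi,X)\xi$ straight when passing from (\ref{iremm}) to the displayed formula for $R(X,\xi)\xi$.
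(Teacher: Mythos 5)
Your proposal is correct and follows essentially the same route as the paper's own proof: both start from the three\-dimensional curvature decomposition (\ref{THREE DIM CURVATURE}) with $Y=Z=\xi$, substitute $R(X,\xi)\xi$ via (\ref{iremm}) together with part $xiii)$ of Lemma \ref{lem3} to get $h^{2}-\alpha^{2}\phi^{2}=\tfrac{1}{2}S(\xi,\xi)\phi^{2}$, expand $Q\xi=-\sigma(e)e+\sigma(\phi e)\phi e+S(\xi,\xi)\xi$ in the pseudo\-orthonormal $\phi$-basis, and collect terms using $S(\xi,\xi)=-2(\alpha^{2}+\lambda^{2})$. The sign bookkeeping you flag (the $-\sigma(e)e$ term from $g(e,e)=-1$ and the passage from $R(\xi,X)\xi$ to $R(X,\xi)\xi$) is exactly where the paper's computation also turns, and your coefficients $a$ and $b$ come out as stated.
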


\begin{proof}
Using (\ref{THREE DIM CURVATURE}), we get%
\begin{equation*}
R(X,\xi )\xi =S(\xi ,\xi )X-S(X,\xi )\xi +QX-\eta (X)Q\xi -\frac{r}{2}%
(X-\eta (X)\xi ),
\end{equation*}%
for any vector field $X.$ By (\ref{iremm}), the last equation reduces to%
\begin{equation}
QX=-\alpha ^{2}\phi ^{2}X+h^{2}X-2\alpha \phi hX-\phi (\nabla _{\xi
}h)X-S(\xi ,\xi )X+S(X,\xi )\xi +\eta (X)Q\xi +\frac{r}{2}(X-\eta (X)\xi ).
\label{QX}
\end{equation}%
By writing $S(X,\xi )=S(\phi ^{2}X,\xi )+\eta (X)S(\xi ,\xi )$ in (\ref{QX}%
), we obtain 
\begin{equation}
QX=\frac{S(\xi ,\xi )}{2}\phi ^{2}X-2\alpha \phi hX-\phi (\nabla _{\xi
}h)X-S(\xi ,\xi )X+S(\phi ^{2}X,\xi )\xi +\eta (X)S(\xi ,\xi )\xi +\eta
(X)Q\xi +\frac{r}{2}\phi ^{2}X.  \label{iremmm8}
\end{equation}%
On the other hand $S$ can be written with respect to the orthonormal basis $%
\{e,\phi e,\xi \}$ as 
\begin{equation}
Q\xi =-\sigma (e)e+\sigma (\phi e)\phi e+S(\xi ,\xi )\xi .  \label{iremmm9}
\end{equation}%
Using (\ref{iremmm9}) in (\ref{iremmm8}), we have%
\begin{eqnarray}
QX &=&\left( \alpha ^{2}+\lambda ^{2}+\frac{r}{2}\right) X+\left( -3(\lambda
^{2}+\alpha )-\frac{r}{2}\right) \eta (X)\xi -2\alpha \phi hX
\label{iremmm10} \\
&&-\phi (\nabla _{\xi }h)X+\sigma (\phi ^{2}X)\xi -\eta (X)\sigma (e)e+\eta
(X)\sigma (\phi e)\phi e,  \notag
\end{eqnarray}%
for arbitrary vector field $X$. This completes the proof.
\end{proof}

\begin{theorem}
\bigskip \label{k mu vu}Let $(M,\phi ,\xi ,\eta ,g)$ be a $3$-dimensional
almost $\alpha $-para-Kenmotsu manifold\textit{. If \ the characteristic
vector field }$\xi $\textit{\ is harmonic map then }almost $\alpha $%
-paracosymplectic $(\kappa ,\mu ,\nu )$-manifold\textit{\ always exist on
every open and dense subset of }$M.$\textit{\ Conversely, if }$M$\textit{\
is an }almost $\alpha $-paracosymplectic $(\kappa ,\mu ,\nu )$-manifold%
\textit{\ then the characteristic vector field }$\xi $\textit{\ is harmonic
map.}
\end{theorem}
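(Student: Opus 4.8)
The plan is to route everything through Theorem \ref{HARMONIC}, which says that $\xi$ is harmonic exactly when it is an eigenvector field of the Ricci operator, i.e. when $\sigma:=S(\xi,\cdot)|_{\ker\eta}=0$; the statement then reduces to a comparison, carried out region by region according to the canonical type of $h$, between this condition and the $(\kappa,\mu,\nu)$-nullity condition, using the three-dimensional curvature formula \eqref{THREE DIM CURVATURE}. (In accordance with Theorem \ref{HARMONIC} I read ``harmonic'' as ``harmonic vector field''; should the map $\xi\colon M\to T_{1}M$ be intended one must in addition keep track of \eqref{TM}.)

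The converse implication is the quick one. If $M$ is an almost $\alpha$-para-Kenmotsu $(\kappa,\mu,\nu)$-space, then \eqref{ib8} with $n=1$ gives $Q\xi=2\kappa\xi$, so $\xi$ is a Ricci eigenvector and Theorem \ref{HARMONIC} yields harmonicity. If one insists on $\xi\colon M\to T_{1}M$ being harmonic, one also checks \eqref{TM}: using $\mathcal A=-\alpha\phi^{2}-\phi h$, $\phi^{3}=\phi$, $\operatorname{tr}h=\operatorname{tr}\phi h=0$ and $h^{2}=(\kappa+\alpha^{2})\phi^{2}$, that trace identity reduces to $\alpha\kappa=\nu(\kappa+\alpha^{2})$, which would be recorded as an extra constraint.

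For the forward implication assume $\sigma=0$ on $M$. Putting $Z=\xi$ in \eqref{THREE DIM CURVATURE} and using $S(Y,\xi)=S(\xi,\xi)\eta(Y)$, a direct rearrangement gives $R(X,Y)\xi=\eta(Y)\,lX-\eta(X)\,lY$ with $l=R(\cdot,\xi)\xi$ the self-adjoint Jacobi operator, $l\xi=0$. Restrict now to the open dense set $\mathcal U$ on which $h$ has a fixed canonical type $\mathfrak h_{1},\mathfrak h_{2}$ or $\mathfrak h_{3}$ (recall $\mathfrak h_{4}$ does not occur). Where $h\equiv 0$ locally, \eqref{b3} directly gives $R(X,Y)\xi=\alpha^{2}(\eta(X)Y-\eta(Y)X)$, a $(-\alpha^{2},0,0)$-structure. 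Where $h\neq 0$, I would combine \eqref{iremm}, the relation $h^{2}=(\alpha^{2}+\tfrac{1}{2}S(\xi,\xi))\phi^{2}$ (item $xiii)$ of Lemmas \ref{K>-1}, \ref{lem3}, and \eqref{SZETAZETA} in type $\mathfrak h_{2}$) and the formula for $\nabla_{\xi}h$ (item $xii)$ of the same lemmas) to write $lX=-R(\xi,X)\xi=\tfrac{1}{2}S(\xi,\xi)\phi^{2}X-2\alpha\phi hX-\phi(\nabla_{\xi}h)X$ and then, type by type, in the form $\kappa\phi^{2}+\mu h+\nu\phi h$ with $\kappa=\tfrac{1}{2}S(\xi,\xi)$ (which equals $\lambda^{2}-\alpha^{2}$, $-\alpha^{2}$, $-\lambda^{2}-\alpha^{2}$ in the three cases), $\mu$ a constant multiple of the function $a_{i}$ of item $xii)$, and $\nu=-2\alpha-\xi(\lambda)/\lambda$ (or $-2\alpha$ in type $\mathfrak h_{2}$); these coefficients can be cross-checked against the Ricci operators of Lemmas \ref{RICCI1}--\ref{le3} with the $\sigma$-terms removed.

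The crux is the remaining point: showing that $\kappa,\mu,\nu\in\mathcal R_{\eta}$, i.e. that $\lambda$ and the $a_{i}$ are constant along the leaves of $\mathcal F$. Note first that the bracket relations $ix)$--$xi)$ of the relevant lemma make $[X,\xi]$ tangent to the leaves whenever $X$ is, so $X(\lambda)=0$ on leaves forces $X(\xi\lambda)=0$, whence $\nu\in\mathcal R_{\eta}$ once $\lambda$ is, and $\kappa\in\mathcal R_{\eta}$ follows from $\lambda\in\mathcal R_{\eta}$ since $\alpha$ is constant. To get $X(\lambda)=0$ for $X\in\ker\eta$ I would evaluate the identity $R(X,Y)\xi=\eta(Y)lX-\eta(X)lY$ on a local $\phi$-basis: by \eqref{b3} it becomes $(\nabla_{e}\phi h)\phi e-(\nabla_{\phi e}\phi h)e=\sigma(\phi e)e-\sigma(e)\phi e=0$, and expanding the left-hand side with the covariant derivatives and brackets of the lemma already produces one of the two leaf-derivatives of $\lambda$; the second, together with the vanishing of the leaf-derivatives of the $a_{i}$, should come from differentiating $\sigma=0$ along the leaves and feeding the result back into the structure equations (equivalently, from the contracted second Bianchi identity $\operatorname{div}Q=\tfrac{1}{2}\,dr$ in dimension three). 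Once $\kappa,\mu,\nu\in\mathcal R_{\eta}$ is in hand, each connected component of $\mathcal U$ is by construction an almost $\alpha$-para-Kenmotsu $(\kappa,\mu,\nu)$-space, which is the assertion; I expect this $\mathcal R_{\eta}$-verification, spread over the three types of $h$, to be the bulk of the work.
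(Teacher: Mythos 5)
Your proposal follows essentially the same route as the paper's proof: the converse via \eqref{ib8} and Theorem \ref{HARMONIC}, and the forward direction by setting $\sigma =S(\xi ,\cdot )_{\ker \eta }=0$, splitting into the canonical types $\mathfrak{h}_{1}$, $\mathfrak{h}_{2}$, $\mathfrak{h}_{3}$ of $h$, and reading $\kappa ,\mu ,\nu $ off the three-dimensional curvature decomposition \eqref{THREE DIM CURVATURE} combined with Lemmas \ref{RICCI1}, \ref{RICCI2} and \ref{le3}; the coefficient values you list ($\kappa =\tfrac{1}{2}S(\xi ,\xi )$, $\mu =-2a_{i}$, $\nu =-(2\alpha +\xi (\lambda )/\lambda )$, resp.\ $-2\alpha $ in type $\mathfrak{h}_{2}$) agree exactly with the paper's. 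The one point of divergence is the step you single out as the crux: the verification that $\kappa ,\mu ,\nu \in \mathcal{R}_{\eta }(M)$. The paper does not carry this out at all --- it simply exhibits $R(X,Y)\xi $ in the form \eqref{BI} with those coefficient functions and stops --- so your (unfinished) sketch of a leaf-constancy argument via the second Bianchi identity is an addition to, not a deficiency relative to, the printed proof. If one insists on the literal definition of a $(\kappa ,\mu ,\nu )$-space given in Section 10, that verification would indeed still have to be completed (by you and by the paper alike); as it stands you have reproduced everything the paper actually proves.
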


\begin{proof}
We will prove theorem for three cases respect to chosen (pseudo) orthonormal
basis.

\textit{Case 1: }We assume that $h$ is $\mathfrak{h}_{1}$ type.

Since $\xi $ is a harmonic vector field, $\xi $ is an eigenvector of $Q.$
Hence we deduce that $\sigma =0.$ Putting $s=\frac{1}{\lambda }h$ in (\ref%
{irem0}) xii) , we find%
\begin{equation}
Q=(\frac{r}{2}+\alpha ^{2}-\lambda ^{2})I+(-\frac{r}{2}+3(\lambda
^{2}-\alpha ^{2}))\eta \otimes \xi -2a_{1}h-(2\alpha +\frac{\xi (\lambda )}{%
\lambda })\phi h.  \label{irem11}
\end{equation}%
Setting $Z=\xi $ in (\ref{THREE DIM CURVATURE}) and using (\ref{irem11}), we
obtain%
\begin{equation*}
R(X,Y)\xi =(-\alpha ^{2}+\lambda ^{2})(\eta (Y)X-\eta (X)Y)-2a_{1}(\eta
(Y)hX-\eta (X)hY)-(2\alpha +\frac{\xi (\lambda )}{\lambda })(\eta (Y)\phi
hX-\eta (X)\phi hY),
\end{equation*}%
where the functions $\kappa ,$ $\mu $ and $\nu $ defined by $\kappa =\frac{%
S(\xi ,\xi )}{2}=(\lambda ^{2}-\alpha ^{2}),$ $\mu =-2a_{1},$ $\nu
=-(2\alpha +\frac{\xi (\lambda )}{\lambda }),$ respectively. Moreover, using
(\ref{irem11}), we have $Q\phi -\phi Q=2\mu h\phi -2\nu h.$

\textit{Case 2: }Secondly, let $h$ be $\mathfrak{h}_{2}$ type.

Putting $\sigma =0$ in (\ref{QH2}) and using (\ref{iremm0}) $xii)$ we get%
\begin{equation}
Q=(\frac{r}{2}+\alpha ^{2})I-(\frac{r}{2}+3\alpha ^{2})\eta \otimes \xi
-2a_{2}h-2\alpha \phi hX.  \label{iremm11}
\end{equation}%
When $\xi $ $=Z$ in (\ref{THREE DIM CURVATURE}) we obtain%
\begin{eqnarray}
R(X,Y)\xi &=&-S(X,\xi )+S(Y,\xi )-\eta (X)QY  \label{iremm13} \\
&&+\eta (Y)QX+\frac{r}{2}(\eta (X)Y-\eta (Y)X),  \notag
\end{eqnarray}%
for any vector fields $X,Y.$ By applying (\ref{iremm11}) in (\ref{iremm13}),
we have%
\begin{equation*}
R(X,Y)\xi =-\alpha ^{2}(\eta (Y)X-\eta (X)Y)-2a_{2}(\eta (Y)hX-\eta
(X)hY)-2\alpha (\eta (Y)\phi hX-\eta (X)\phi hY)
\end{equation*}%
where the functions $\kappa ,$ $\mu $ and $\nu $ defined by $\kappa =\frac{%
S(\xi ,\xi )}{2}=-\alpha ^{2},$ $\mu =-2a_{2},$ $\nu =-2\alpha ,$
respectively . Furthermore, by (\ref{iremm11}), we have $Q\phi -\phi Q=2\mu
h\phi -2\nu h.$

\textit{Case 3:}\ Finally, we suppose that $h$ is $\mathfrak{h}_{3}$ type.

Since \textit{\ }$\xi $\textit{\ }is a harmonic map, we have $\sigma =0$.
Putting $s=\frac{1}{\lambda }h$ in (\ref{iremmm7}) we get%
\begin{equation}
Q=a\text{ }I+b\eta \otimes \xi -2\alpha \phi h-\phi (\nabla _{\xi }h),
\label{iremmm11}
\end{equation}%
Setting $\xi =$ $Z$ in (\ref{THREE DIM CURVATURE}) we again obtain%
\begin{eqnarray}
R(X,Y)\xi &=&-S(X,\xi )+S(Y,\xi )-\eta (X)QY  \label{iremmm13} \\
&&+\eta (Y)QX+\frac{r}{2}(\eta (X)Y-\eta (Y)X),  \notag
\end{eqnarray}%
for any vector fields $X,Y.$ Using (\ref{iremmm11}) in (\ref{iremmm13}), we
get%
\begin{equation*}
R(X,Y)\xi =-(\alpha ^{2}+\lambda ^{2})(\eta (Y)X-\eta (X)Y)-2a_{3}(\eta
(Y)hX-\eta (X)hY)-(2\alpha +\frac{\xi (\lambda )}{\lambda })(\eta (Y)\phi
hX-\eta (X)\phi hY),
\end{equation*}%
where the functions $\kappa ,$ $\mu $ and $\nu $ are defined by $\kappa
=-(\alpha ^{2}+\lambda ^{2}),$ $\mu =-2a_{3},$ $\nu =-(2\alpha +\frac{\xi
(\lambda )}{\lambda })$, respectively. By help of (\ref{iremmm11}), we get $%
Q\phi -\phi Q=2\mu h\phi -2\nu h.$

This completes the proof.
\end{proof}

\section{Example}

\begin{example}
\label{E}We consider the $3$-dimensional manifold%
\begin{equation*}
M=\left\{ (x,y,z)\in R^{3}\mid x\neq 0,\text{ }y\neq 0\right\}
\end{equation*}%
and the vector fields%
\begin{equation*}
e_{1}=\frac{\partial }{\partial x},\ \ e_{2}=\phi e_{1}=\frac{\partial }{%
\partial y},\ \ e_{3}=\xi =x\ \frac{\partial }{\partial x}+(y+2x)\frac{%
\partial }{\partial y}+\frac{\partial }{\partial z}\ .
\end{equation*}%
The $1$-form $\eta =dz$ $\ $and the fundamental $2$-form $\Phi =dx\wedge
dy-(y+2x)dx\wedge dz-xdy\wedge dz$ defines an almost para-Kenmotsu manifold.

Let $g,$ $\phi $ be the pseudo-Riemannian metric and the $(1,1)$-tensor
field given by%
\begin{eqnarray*}
g &=&\left( 
\begin{array}{ccc}
1 & 0 & -x \\ 
0 & -1 & y+2x \\ 
-x & y+2x & 1-3x^{2}-4xy-y^{2}%
\end{array}%
\right) , \\
\phi &=&\left( 
\begin{array}{ccc}
0 & 1 & -(y+2x) \\ 
1 & 0 & -x \\ 
0 & 0 & 0%
\end{array}%
\right) .
\end{eqnarray*}%
We easily get 
\begin{eqnarray*}
\left[ e_{1},e_{2}\right] &=&0, \\
\left[ e_{1},e_{3}\right] &=&e_{1}+2e_{2}, \\
\left[ e_{2},e_{3}\right] &=&e_{2}.
\end{eqnarray*}%
Moreover, the above example is an almost para-Kenmotsu $(\kappa ,\mu ,\nu
)=(1,1,-2)$-space.
\end{example}

\end{document}